\newtheorem{thm}{Theorem}[section]
\newtheorem{lemma}[thm]{Lemma}
\newtheorem{prop}[thm]{Proposition}
\newtheorem{cor}[thm]{Corollary}
\newtheorem{conj}{Conjecture}
\newcommand{\Nb}{\mathbb{N}}
\newcommand{\Zb}{\mathbb{Z}}
\newcommand{\LL}{\mathcal{L}}
\newcommand{\CP}{\mathcal{CP}}
\newcommand{\II}{\mathcal{I}}
\newcommand{\ZZ}{\mathcal{Z}}
\newcommand{\GG}{\mathcal{G}}
\newcommand{\la}{\langle}
\newcommand{\ra}{\rangle}
\title{On Central-Peripheral Appendage Numbers of Uniform Central Graphs}
\author{Sul-Young Choi and Jonathan Needleman}
\date{}
\begin{document}
\maketitle
\begin{abstract}
	In a uniform central graph (UCG) the eccentric verticies of a central vertex is the same for all central verticies.  This collection of eccentric verticies is the centered periphery.  For a pair of graphs $(C, P)$ the central-peripheral appendage number, $A_{ucg}(C, P)$, is the minimum number verticies needed to be adjoined to the graphs $C$ and $P$ in order to construct a uniform central graph $H$ with center $C$ and centered-periphery $P$.  We compute $A_{ucg}(C,P)$ in terms of the radius and diameter of $P$ and whether or not $C$ is a complete graph. In the process we show $A_{ucg}(C, P)\leq 6$  if $\text{diam}(P)>2$.   We also provide structure theorems for UCGs in terms of the centered periphery.

\end{abstract}
\section{Introduction}
 Let $G=(V(G), E(G))$ be a simple graph.  The eccentricity of a vertex  $v$, denoted by  $e(v)$, is defined by $max\{d(u,v): u \in V(G)\}$ where $d(u, v)$ is the distance between two vertices $u$ and $v$.   A vertex $x$  is called an eccentric vertex of $v$ if  $d(v,x) = e(v)$; and the set of all eccentric vertices of $v$ is denoted by $EC(v)$. The radius of $G$, $r(G)$ or $r$, is the minimum eccentricity of the vertices in $G$ and the diameter of $G$, $\text{diam}(G)$, is the maximum eccentricity of the vertices in $G$.
A $(u,v)$-path  is called a distance measuring path, or a dmpath, when the length of the path is $d(u,v)$. For a central vertex $u$, a $(u,v)$-dmpath is called a radial path if the length of the path is $r(G)$. For any nonempty subset $S$ of vertices in $G$,  $\la S \ra$ represents the induced subgraph of $G$ by $S$.  For terminology not defined in this paper, the reader is referred to \cite{West}.

Let $\ZZ(G)$ be the center of the graph $G$  and define the {\it centered periphery} of $G$ as   \[\CP(G)=\bigcup_{z\in \ZZ(G)} EC(z),\]

\noindent the set of vertices that are far from the center.    The periphery, which is the set of maximally eccentric vertices, coincides with centered periphery for some graphs.  However, in many cases they differ.  For instance, the graph in figure \ref{fig:P<>CP} has periphery $\{p_0, p_1, p_2, p_5, p_6, p_7\}$ while the  centered periphery is $\{p_1, \ldots, p_6\}$.

\begin{figure}[!htbp]
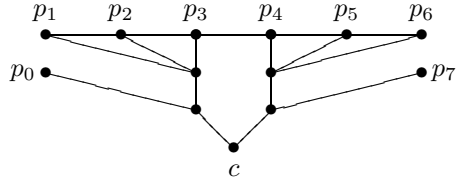

	\[\xy
	(-10,30)*{\bullet};
	%(-20,20)*{\bullet};(-10,20)*{\bullet};(20,20)*{\bullet};(0,20)*{\cdots};
	(-10, 27)*{c};
	%D_1
	(-15,35)*{\bullet};(-5,35)*{\bullet};%(-35,35)*{\bullet};
	%D_2
	(-15,40)*{\bullet};(-5,40)*{\bullet};(-35,40)*{\bullet};(15, 40)*{\bullet};
	(-15,35)*{};(-15,40)*{}**\dir{-};(-5,35)*{};(-5,40)*{}**\dir{-};(-15,35)*{};(-35,40)*{}**\dir{-};(-5,35)*{};(15,40)*{}**\dir{-};
	(-19,40)*{};(0,40)*{};(20,40)*{};(-38, 40)*{p_0};(18, 40)*{p_7};
	%D_3
	(-15,45)*{\bullet};(-5,45)*{\bullet};
	(-15,40)*{};(-15,45)*{}**\dir{-};(-5,40)*{};(-5,45)*{}**\dir{-};
	(-35,48)*{p_1};(-25,48)*{p_2};(-15,48)*{p_3};(-5,48)*{p_4}; (5,48)*{p_5};(15,48)*{p_6};
	%Connecting to D_1
	(-10,30)*{};(-15,35)*{}**\dir{-};(-10,30)*{};(-5,35)*{}**\dir{-};%(-5,30)*{};(-35,35)*{}**\dir{-}; (-38, 40)*{p_0};
	%P
	(-35,45)*{\bullet};(-25,45)*{\bullet};(-15,45)*{\bullet};(-5,45)*{\bullet};(5,45)*{\bullet};(15,45)*{\bullet};
	(-35,45)*{};(-15,40)*{}**\dir{-};(-25,45)*{};(-15,40)*{}**\dir{-};(-15,45)*{};(-15,40)*{}**\dir{-};
	(5,45)*{};(-5,40)*{}**\dir{-};(15,45)*{};(-5,40)*{}**\dir{-};(-5,45)*{};(-5,40)*{}**\dir{-};
	(-35,45)*{};(-25,45)*{}**\dir{-};(-25,45)*{};(-15,45)*{}**\dir{-};(-15,45)*{};(-5,45)*{}**\dir{-};
	(-5,45)*{};(5,45)*{}**\dir{-};(5,45)*{};(15,45)*{}**\dir{-};
	\endxy\]
	\caption{Periphery $\neq$ Centered Periphery}\label{fig:P<>CP}
\end{figure}

 A graph $G$ is called a {\it uniform central graph}, or UCG, if $EC(c)$ is same  for all central vertices $c$ of $G$, i.e. $EC(c)=\CP(G)$ for every vertex $c$ in the center.  It is known that a radial path in a uniform central graph  contains only one central vertex \cite{Choi Manickam}.   In the study of UCGs it is also useful to define the set of  ``intermediate'' vertices of the graph as $\II(G)=V(G)-(\ZZ(G)\cup \CP(G))$.

An appendage number of a graph $G$ is the minimum number of vertices to be added to $G$ to obtain a supergraph $H$ of $G$ so that $H$ satisfies some prescribed  properties.  Buckley, Miller and Slater \cite{Buckley}  studied the appendage number of a graph $G$ satisfying that $G$ is the center of its supergraph.  This result is extended by Gu  \cite{Gu}, where the  supergraph is a UCG with a given $G$ as its center.   More recently S. Klav\v{z}ar, K. Narayankar, and S. Lokesh \cite{Klav} looked at appendage numbers of graphs where the supergraph is a UCG with $G$ as any subgraph.

In this paper, we study the appendage number for a pair of given graphs $(C, P)$, where the supergraph $H$ is a uniform central graph satisfying  $\langle \ZZ(H) \rangle =C$ and $\langle \CP(H) \rangle = P$.  We denote the {\it central-peripheral appendage number}, $A_{ucg}(C,P)$, as the minimum number of   ``intermediate'' vertices needed to construct such a uniform central graph $H$. By convention $A_{ucg}(C, P)=\infty$ if there is no graph satisfying the above conditions.

Gu's results in \cite{Gu} depend on whether or not the desired center is a complete graph.  This distinction also appears in our work.  In light of this we organize our paper in the following way.  Section \ref{sec:GC} focuses on results that do not depend on the center.  These results are then applied to graphs with specific centers; complete graphs in section \ref{sec:C=Kn} and all other graphs in section \ref{sec:C<>Kn} to classify central-peripheral appendage numbers.  The results in sections \ref{sec:C=Kn} and \ref{sec:C<>Kn} are given in terms of sizes of various coverings of $P$.  In section \ref{sec:cov} computes the sizes of these coverings.  Section \ref{sec:Append} summarizes the previous work and describes the central-peripheral appendage numbers in terms of the diameter and radius of $P$.  Finally, in section \ref{sec:Gu}, using different techniques than Gu, we  obtain her results as a corollary of our results.

\section{General Centers}\label{sec:GC}

%\begin{prop}\label{r>1}
%If $P$ is a graph with $r(P)\leq 1$, then $A_{ucg}(C,P)=\infty$ for all graphs $C$.
%\end{prop}
%\begin{proof}
%For given graphs $C$ and $P$ with $r(P)\leq 1$, suppose there exists  a uniform central graph $H$ with $\langle \ZZ(H)\rangle =C$ and $\langle \CP(H) \rangle =P$.  For two vertices $c$ in $ \ZZ(H)$ and $p$ in $\ZZ(P)$, let $\pi$ be an $(c,p)$-radial path, and $x$ the vertex on $\pi$ adjacent to $c$.   Then $d(x,p)=r(H)-1$ and  $x$ is not a central vertex of $H$ since a radial path in a UCG contains only one central vertex.

%For a vertex $y$ in $V(H)- V(P)$, $d(c,y)\leq r(H)-1$ and so
%\[d(x,y) \leq d(x,c)+d(c,y) \leq 1+(r(H)-1)=r(H).\]   For a vertex $q$ in
%$V(P)$, $d(p,q) \leq r(P)\leq 1$ and so
%\[d(x,q) \leq d(x,p) + d(p,q) \leq (r(H)-1)+1=r(H).\]  This implies that $e(x)\leq r(H)$ and $x$ is a central vertex of $H$, a contradiction.

%\end{proof}

%\begin{cor}\label{cor:r>1}
%If $H$ is a UCG with $P=\la\CP(H)\ra$, then $r(P)>1$.
%\end{cor}

%\begin{proof}
%Let $H$ be a UCG with $P=\la\CP(H)\ra$ and $C=\la\ZZ(H)\ra$. By definition $A_{ucg}(C,P)\leq %|\II(H)|<\infty$, and so by proposition \ref{r>1} $r(P)>1$.
%\end{proof}

This section develops results about the structure of a uniform central graph $H$ in terms of $P=\la\CP(H)\ra$.  We do this by studying coverings of the centered periphery.

A \textit{covering} of a graph $G=(V,E)$ is a set $\overline{V}=\{V_1, \ldots, V_k\}$ where $V_i\subset V$ with $\cup V_i= V$.  We say $k$ is the size of the covering.  A \textit{subcovering} of a covering  $\overline{V}$ is a subset of $\overline{V}$ that is also a covering.

Throughout this paper we are  interested in coverings satisfying various properties.  The simplest and most important of these cinditions is condition A.  Let $P$ be a graph with covering $\overline{P}=\{P_1, \ldots, P_k\}$.  We say the pair $(P, \overline{P})$ satisfies
    \begin{description}
        \item[Condition A:] if for each $1\leq i \leq k$, there is  a vertex $p \notin P_i$ satisfying $d(P_i, p)\geq 2$.
    \end{description}
When there is no confusion to the graph $P$, we simply say $\overline{P}$ satisfies condition A.

Elements of a cover can overlap.  It is useful to minimize this overlap in some sense.

\begin{lemma}\label{lem:subcover}
	Let $\{P_1, \ldots, P_k\}$ be a covering of $P$ satisfying condition A.  Then there is a subcovering $\{P_1', \ldots, P_{\eta}'\}$ satisfying condition A and for each $i$ ($1\leq i\leq \eta)$
	\[P_i'\not\subset \bigcup_{j\neq i} P_j'.\]
	That is, for each $i$ there exists $\tilde p_i\in P_i$ such that $\tilde p_i\not\in P_j$ for all $j\neq i$.
\end{lemma}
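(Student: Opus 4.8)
The plan is to extract a minimum-size subcovering and to observe that \textbf{Condition A} is automatically inherited along the way, so that the only real work is the standard irredundancy argument.

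First I would record the key point: whether a set $P_i$ admits a vertex $p\notin P_i$ with $d(P_i,p)\geq 2$ is a property of the single set $P_i$ relative to the fixed ambient graph $P$; it makes no reference to the other members of the covering. Consequently, if $\{P_1,\dots,P_k\}$ satisfies Condition A, then so does \emph{every} subcovering: each retained set $P_i$ keeps the very same witness $p\in V(P)\setminus P_i$. Note that the witness $p$ need only be a vertex of $P$, not a member of the chosen subcovering, so nothing is lost when we discard sets. This reduces the lemma to producing a subcovering in which no member is redundant.

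Next I would choose, among all subcoverings of $\{P_1,\dots,P_k\}$, one of \emph{minimum size}; such a subcovering exists because the family of subcoverings is finite and nonempty (the whole covering is one such). Relabel it as $\{P_1',\dots,P_\eta'\}$. By the previous paragraph it satisfies Condition A. To obtain irredundancy, suppose for contradiction that $P_i'\subset\bigcup_{j\neq i}P_j'$ for some $i$. Then $\bigcup_{j\neq i}P_j'=\bigcup_{j}P_j'=V(P)$, so $\{P_j':j\neq i\}$ is a subcovering of strictly smaller size, contradicting minimality. Hence $P_i'\not\subset\bigcup_{j\neq i}P_j'$ for every $i$, which is exactly the assertion that some $\tilde p_i\in P_i'$ lies in no other $P_j'$.

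I do not expect a genuine obstacle here: the substance is the elementary fact that a finite covering has an irredundant subcovering, and the hypothesis enters only through the inheritance of Condition A. The one point that must be verified carefully is precisely that inheritance—that discarding sets cannot destroy Condition A for a surviving set—which is immediate once one notes Condition A is a per-set condition whose witnesses are drawn from all of $V(P)$. As a sanity check, Condition A forces $P_i\subsetneq V(P)$ for every $i$, so in fact $\eta\geq 2$, although this is not needed for the argument.
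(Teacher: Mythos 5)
Your proof is correct and follows essentially the same route as the paper: the paper's (very terse) proof removes a redundant set and notes the remaining family is still a covering satisfying condition A, implicitly iterating, while you package the same removal argument via a minimum-size subcovering. Both rest on the same key observation—that condition A is a per-set property witnessed by vertices of $P$, hence inherited by any subcovering—which you make explicit and the paper leaves implicit.
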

\begin{proof}
	Suppose there exists an $i$ satisfying
	\[P_i\subset \bigcup_{j\neq i} P_j.\]
	Then $\{P_j: j\neq i\}$ is a subcovering satisfying condition A.
\end{proof}

The main idea of this paper is to take the question of determining $A_{ucg}(C, P)$ and to transfer it to questions about the size of coverings of $P$ satisfying various conditions.  This is possible because for any UCG there exists a natural covering on $P$.   It turns out this covering satisfying condition A.  These coverings are used to obtain lower and upper bounds on $A_{ucg}(C,P)$. To do this we first introduce some notation.

For a uniform central graph $H$ let
\[D_m=\{u\in V(G): d(u, \ZZ(H))=m\}\] \noindent
for $0\leq m \leq r=r(H)$. This gives a stratification of $H$ with $D_0=\ZZ(H)$ and $D_r=\CP(H)$.  For an enumeration $D_1=\{x_1, \ldots, x_k\}$ let $\LL_i$ be the set of all radial paths from $C$ to $P$ containing $x_i$

\begin{lemma}\label{lem:MCondI}
Let $H$ be a uniform central graph with $\langle \CP(H) \rangle =P$.  Then the graph $H$ induces a covering on $P$ satisfying condition A.
\end{lemma}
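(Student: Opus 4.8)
The plan is to construct the ``natural'' covering promised just before the lemma and then verify Condition A directly from the uniform-central structure. Using the notation $D_1=\{x_1,\ldots,x_k\}$ and $\LL_i$ (the radial paths through $x_i$), I would set $P_i$ to be the set of peripheral endpoints of the paths in $\LL_i$; equivalently I will show $P_i=\{p\in\CP(H): d(x_i,p)=r-1\}$. The first task is to check that $\{P_1,\ldots,P_k\}$ actually covers $V(P)=\CP(H)$. Given any $p\in\CP(H)$ and any central vertex $c$, the UCG identity $\CP(H)=EC(c)$ forces $d(c,p)=r$, so a geodesic $c=v_0,v_1,\ldots,v_r=p$ is a radial path. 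By the cited fact that a radial path in a UCG meets the center only once, $v_1\notin\ZZ(H)$; since $v_1$ is adjacent to $c$ this gives $v_1\in D_1$, say $v_1=x_i$, and then $p$ is the endpoint of a radial path in $\LL_i$, so $p\in P_i$.

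Along the way I would record the bookkeeping fact that makes $P_i$ clean: if a radial path contains $x_i\in D_1$, then $x_i$ must be its first step $v_1$. Indeed, if $x_i=v_j$ with $j\geq 2$, then choosing a central vertex $c'$ adjacent to $x_i$ (one exists because $d(x_i,\ZZ(H))=1$) yields $d(c',p)\leq 1+(r-j)<r$, contradicting $p\in EC(c')$. Hence every $p\in P_i$ satisfies $d(x_i,p)=r-1$, which is exactly the input the Condition A computation needs.

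The heart of the proof is Condition A: for each $i$ I must produce a peripheral vertex $p\notin P_i$ with $d(P_i,p)\geq 2$ in $P$. Because $P$ is the induced subgraph on $\CP(H)$, two peripheral vertices are adjacent in $P$ iff they are adjacent in $H$, so $d(P_i,p)\geq 2$ means precisely that $p\notin P_i$ and $p$ is non-adjacent in $H$ to every vertex of $P_i$. I would obtain such a $p$ from an eccentric vertex of $x_i$. Since $x_i\notin\ZZ(H)$ we have $e(x_i)\geq r+1$; choose $w$ with $d(x_i,w)=e(x_i)$ and a central $c$ adjacent to $x_i$. Then $r\geq d(c,w)\geq d(x_i,w)-1=e(x_i)-1\geq r$, which simultaneously pins down $e(x_i)=r+1$ and $d(c,w)=r$. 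The latter says $w\in EC(c)=\CP(H)$, so $w$ is peripheral; and $d(x_i,w)=r+1$ gives $w\notin P_i$ (whose members are at distance $r-1$), while any $q\in P_i$ adjacent to $w$ would force $d(x_i,w)\leq(r-1)+1=r$, a contradiction, so $w$ is non-adjacent to all of $P_i$.

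The step I expect to be the real obstacle is this last construction — recognizing that the eccentric vertex $w$ of the intermediate vertex $x_i$ is \emph{forced} to be peripheral and to sit at the maximal distance $r+1$ from $x_i$. Everything hinges on the squeeze $e(x_i)-1\leq d(c,w)\leq r\leq e(x_i)-1$ and on translating the ``$d\geq 2$ in $P$'' requirement into non-adjacency in $H$ via the induced-subgraph structure. Once these two observations are in place, both the covering property and Condition A fall out, so I would present the argument in exactly that order.
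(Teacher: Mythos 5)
Your proof is correct and follows essentially the same route as the paper: the same induced covering of $P$ by the peripheral endpoints of radial paths through the vertices of $D_1$, and the same key ingredients (a central neighbor $c$ of $x_i$, the fact that $d(x_i,p)=r-1$ for every $p\in P_i$, and the non-centrality of $x_i$). The only difference is direction — the paper assumes Condition A fails and deduces $e(x_i)\leq r$, forcing $x_i$ into the center, whereas you argue the contrapositive directly by exhibiting an eccentric vertex of $x_i$ as the required witness, which also has the merit of justifying the paper's unproved assertion that vertices of $P_i$ lie at distance exactly $r-1$ from $x_i$.
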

\begin{proof}
Let $H$ be a uniform central graph with $\langle \CP(H) \rangle =P$. Furthermore, let $C=\langle \ZZ(H) \rangle$ and $D_1=\{x_1, \ldots, x_k\}$ for some $k$.   Let $P_i=V(P)\cap V(\LL_i)$. Then $\{P_1, \ldots, P_{k}\}$ is a covering of $P$ since each radial path from $C$ to $P$ contains a vertex in $D_1$.

Assume $\{P_1,\ldots, P_{k}\}$ does not satisfy the condition A.  Then there is an $i$ satisfying that  for every vertex $p \not\in P_i$ there is a $p'\in P_i$ with  $d(p, p')\leq 1$.  This shows $x_i\in V(C)$, a contradiction.

Since $x_i\in D_1$, there is a central vertex $c$ with $d(x_i, c)=1$.
For each vertex $v$ of $H$ not in $P$, $d(c, v)\leq r(H)-1$ and hence $d(x_i, v)\leq r(H)$.
If $p$ is a vertex in $P_i$, $d(x_i, p)=r(G) -1$ from the construction of $P_i$.

If $p$ is a vertex in $P$ but not in  $P_i$.  Then there is a vertex $p'\in P_i$ with $d(p, p')=1$ and so
\[d(x_i, p)\leq d(x_i, p')+d(p', p)=(r(H)-1)+1=r(H)\]
and so $x_i\in V(C)$, a contradiction.
\end{proof}

\begin{cor}\label{cor:r>1}
	If $H$ is a UCG with $P=\la\CP(H)\ra$, then $r(P)>1$.
\end{cor}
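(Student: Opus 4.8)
The plan is to read the corollary straight off Lemma \ref{lem:MCondI}. That lemma produces, for any UCG $H$ with $\langle \CP(H)\rangle = P$, a covering $\{P_1,\dots,P_k\}$ of $P$ satisfying condition A, and I claim this single fact already forces $r(P)\ge 2$. The guiding observation is that $r(P)>1$ is equivalent to the statement that \emph{every} vertex of $P$ has eccentricity at least $2$ inside $P$, i.e.\ every vertex has some other vertex at distance at least $2$ from it. Condition A is tailor-made to supply exactly such a far vertex for each block of the covering, so the two statements should match up almost verbatim.

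Concretely, I would argue as follows. Fix an arbitrary vertex $p^{*}\in V(P)$. Because $\{P_1,\dots,P_k\}$ covers $P$, there is an index $i_0$ with $p^{*}\in P_{i_0}$. Applying condition A to the block $P_{i_0}$ yields a vertex $p\notin P_{i_0}$ with $d(P_{i_0},p)\ge 2$, where distance is measured in $P$. Since $d(P_{i_0},p)=\min_{q\in P_{i_0}}d(q,p)$ and $p^{*}\in P_{i_0}$, I obtain $d(p^{*},p)\ge d(P_{i_0},p)\ge 2$, hence $e(p^{*})\ge 2$. As $p^{*}$ was arbitrary, every vertex of $P$ has eccentricity at least $2$, and therefore $r(P)\ge 2>1$.

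The argument is essentially a one-line consequence of condition A, so I expect no serious obstacle; the only points requiring care are bookkeeping. First, I must keep the distances in condition A measured inside $P$ rather than inside $H$, since the conclusion concerns the intrinsic radius $r(P)$. Second, the degenerate possibilities are handled for free: if $P$ were a single vertex then no block could contain a vertex outside it, so condition A could not hold, contradicting Lemma \ref{lem:MCondI}; and if $P$ is disconnected then $r(P)=\infty>1$ trivially. Thus the existence of a condition-A covering is the entire content, and the corollary follows at once.
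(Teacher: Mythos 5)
Your proof is correct and takes essentially the same approach as the paper: both read the corollary directly off the condition-A covering supplied by Lemma \ref{lem:MCondI}, the paper arguing the contrapositive (a vertex of eccentricity $1$ would force every vertex within distance $1$ of its block, violating condition A) while you argue the direct form (condition A hands each vertex a companion at distance at least $2$, so every eccentricity is at least $2$). The two arguments are identical up to contraposition, and your bookkeeping remarks (distances in condition A only concern adjacency, so measuring in $P$ versus $H$ is harmless; disconnected $P$ is trivial) are sound.
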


\begin{proof}
	If $r(P)=1$, then there is $q\in V(P)$ with $e(q)=1$.  Let $\overline{P}=\{P_1, \ldots, P_k\}$ be the induced cover of $P$.  Without loss of generality we may assume $q\in P_1$.  Then for all $p\in V(P)$, $d(p, P_1)\leq 1$ and condition A is not met.  Hence by lemma \ref{lem:MCondI} $P$ is not a centered periphery for any UCG.
\end{proof}

\begin{cor}\label{r>1}
	If $P$ is a graph with $r(P)\leq 1$, then $A_{ucg}(C,P)=\infty$ for all graphs $C$.
\end{cor}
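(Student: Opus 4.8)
The plan is to derive this directly from Corollary~\ref{cor:r>1}, which is essentially its contrapositive dressed in the language of appendage numbers. First I would unwind the definition: $A_{ucg}(C,P)$ is finite exactly when there exists a uniform central graph $H$ with $\la\ZZ(H)\ra=C$ and $\la\CP(H)\ra=P$, and it equals $\infty$ by convention when no such $H$ exists. So it suffices to show that when $r(P)\leq 1$ no such $H$ can exist, for any choice of $C$.

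Next I would argue by contradiction. Suppose some graph $C$ admits such an $H$. Then in particular $H$ is a UCG with $P=\la\CP(H)\ra$, so Corollary~\ref{cor:r>1} applies and yields $r(P)>1$, contradicting the hypothesis $r(P)\leq 1$. Hence no admissible $H$ exists for any $C$, and therefore $A_{ucg}(C,P)=\infty$ for every graph $C$.

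I expect no real obstacle here: the mathematical content lives entirely in Corollary~\ref{cor:r>1} (and ultimately in Lemma~\ref{lem:MCondI}, through the observation that a vertex of eccentricity $1$ in $P$ forces the induced cover to violate Condition~A). The only point worth flagging explicitly is that Corollary~\ref{cor:r>1} constrains only the centered periphery and never mentions the center at all, so the same contradiction is available no matter which $C$ is prescribed. This is precisely what allows the conclusion to range over all graphs $C$ uniformly, which is the one feature distinguishing this corollary from the previous one.
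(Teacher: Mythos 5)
Your proposal is correct and matches the paper's (implicit) argument exactly: the corollary is stated without proof precisely because it is the immediate combination of Corollary~\ref{cor:r>1} with the convention that $A_{ucg}(C,P)=\infty$ when no admissible UCG exists. Your observation that the argument is uniform in $C$ because Corollary~\ref{cor:r>1} never references the center is the right way to justify the quantifier over all graphs $C$.
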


For $H$, a UCG with $C=\la\ZZ(H)\ra$  and $P=\la\CP(H)\ra$, we use the induced covering from lemma \ref{lem:MCondI} to gain a better understanding of the structure of $\II(H)$.

 Let  $D_1=\{x_1, \ldots, x_k\}$ and $\{P_1, \ldots, P_{k}\}$ be the induced covering of $P$ defined as in lemma \ref{lem:MCondI}. Without loss of generality assume $\{P_1,\ldots, P_{k'}\}$ with $k'\leq k$, is subcovering with non-empty elements .  Also let  $\overline{P}=\{P_1, \ldots, P_{k''}\}$ and $\{\tilde p_1, \ldots, \tilde p_{k''}\}$, with $k''\leq k'$,  be as in lemma \ref{lem:subcover}.  For $1\leq m\leq r(H)-1$ we define the following subsets of $D_m$.

 \[D_m'=D_m\cap \bigcup_{i=1}^{k'}V(\LL_i) \text{ and } D_m''(\overline{P})=D_m\cap \bigcup_{i=1}^{k''}V(\LL_i). \]
$D_m$ and $D_m'$ are well-defined with respect to $H$, however $ D_m''(\overline{P})$ depends on a choice of subcover.  When this choice is  clear from context, we simply use $D''_m$.

 The following proposition describes the structure of $\II(H)$ that is the key to the rest of this paper.

 \begin{prop}\label{prop:uniquepaths}
 	Let $H$ be a UCG with $C=\la\ZZ(H)\ra$,  $P=\la\CP(H)\ra$,   $\overline{P}=\{P_1, \ldots, P_{k''}\}$  and $\{\tilde p_1, \ldots, \tilde p_{k''}\}$ as in lemma \ref{lem:subcover} with respect to the induced covering from lemma \ref{lem:MCondI}.  Then any $(x_i, \tilde p_i)$-dmpath and $(x_j, \tilde p_j)$-dmpath are disjoint if $i\neq j$.	
 \end{prop}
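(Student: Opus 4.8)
The plan is to first show that every $(x_i,\tilde p_i)$-dmpath climbs the stratification one level at a time, and then to reach a contradiction from a hypothetical common vertex by splicing the two paths into a radial path that reaches $\tilde p_i$ through the wrong $D_1$-vertex. Throughout write $r=r(H)$.

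First I would fix an $(x_i,\tilde p_i)$-dmpath and write it as $(v_1,\dots,v_r)$ with $v_1=x_i$ and $v_r=\tilde p_i$; this has $r-1$ edges because $d(x_i,\tilde p_i)=r-1$, as established in the proof of Lemma~\ref{lem:MCondI}. The key claim is that $v_m\in D_m$ for every $m$. For the upper bound, pick a central vertex $c$ adjacent to $x_i$ (one exists since $x_i\in D_1$); since the path is a dmpath, $d(x_i,v_m)=m-1$, whence $d(\ZZ(H),v_m)\le d(c,v_m)\le 1+(m-1)=m$. For the lower bound I would use the UCG hypothesis: $\tilde p_i\in\CP(H)=EC(z)$ for \emph{every} central vertex $z$, so $d(z,\tilde p_i)=e(z)=r$. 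Combined with $d(v_m,\tilde p_i)=r-m$ (a subpath of a dmpath is a dmpath), the triangle inequality gives $r\le d(z,v_m)+(r-m)$, i.e.\ $d(z,v_m)\ge m$ for all $z\in\ZZ(H)$. Hence $d(\ZZ(H),v_m)=m$ and $v_m\in D_m$.

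Now suppose, for contradiction, that an $(x_i,\tilde p_i)$-dmpath $\pi_i$ and an $(x_j,\tilde p_j)$-dmpath $\pi_j$ with $i\neq j$ share a vertex $w$. By the previous step $w\in D_m$ for a single $m$, and since each path meets $D_m$ exactly at its $m$-th vertex, $w$ is the $m$-th vertex of both $\pi_i$ and $\pi_j$. I would then splice: follow $\pi_j$ from $x_j$ to $w$ ($m-1$ edges) and then $\pi_i$ from $w$ to $\tilde p_i$ ($r-m$ edges), obtaining an $(x_j,\tilde p_i)$-walk of length $r-1$. Prepending the edge $c_jx_j$ for a central vertex $c_j$ adjacent to $x_j$ yields a $(c_j,\tilde p_i)$-walk of length $r$. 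But $\tilde p_i\in\CP(H)=EC(c_j)$ forces $d(c_j,\tilde p_i)=r$, so this walk attains the distance and must therefore be a shortest path with no repeated vertices. It is thus a bona fide radial path from $C$ to $\tilde p_i$ passing through $x_j$, so $\tilde p_i\in V(\LL_j)$ and hence $\tilde p_i\in P_j$.

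This is the contradiction: Lemma~\ref{lem:subcover} guarantees $\tilde p_i\notin P_j$ whenever $j\neq i$. Therefore no common vertex can exist and the two dmpaths are disjoint. I expect the main obstacle to be the stratification claim of the first step; this is exactly where the UCG hypothesis does the essential work, since it is the equality $d(z,\tilde p_i)=r$ for \emph{all} central $z$ — not merely for one — that forces the dmpath to advance one stratum per edge and pins the shared vertex to the same level on both paths. After that, the only subtlety is the standard observation that a walk whose length equals the distance between its endpoints is automatically a path.
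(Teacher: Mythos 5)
Your proposal is correct and takes essentially the same route as the paper's proof: assume the two dmpaths share a vertex, splice the initial segment of one with the terminal segment of the other, and conclude that $\tilde p_i\in P_j$ (the paper splices in the mirror direction, obtaining $\tilde p_j\in P_i$), contradicting the choice of $\tilde p_i$ in Lemma~\ref{lem:subcover}. Your stratification claim ($v_m\in D_m$) is in fact the justification the paper leaves implicit when it asserts the concatenation is a dmpath---without knowing the shared vertex lies at the same level $m$ on both paths, the spliced walk need not have length $r-1$---so your write-up fills in that step rather than deviating from the argument.
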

\begin{proof}
	For  $L_i$ a $(x_i, \tilde p_i)$-dmpath and $L_j$ a $(x_j, \tilde p_j)$-dmpath with $i\neq j$, assume there exists a  $y\in V(L_i)\cap V(L_j)$.  Let $L_i'$ be the subpath of $L_i$ from $x_i$ to $y$ and $L_j'$ the subpath of $L_j$ from $y$ to  $\tilde p_j$.  The concatenation   of $L_i'$ and $L_j'$ yields a $(x_i, \tilde p_j)$-dmpath and so $\tilde p_j\in P_i$, a contradiction.
\end{proof}

\begin{cor}\label{cor:Dm}
	With the assumptions of proposition \ref{prop:uniquepaths}, $|D_1''|\leq |D_m''|$ for $1\leq m\leq r(H)-1$.
\end{cor}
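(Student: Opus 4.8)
The plan is to reduce the inequality to two things: first that $|D_1''|=k''$, and second that for each fixed $m$ in the range $1\le m\le r(H)-1$ the set $D_m''$ contains at least $k''$ distinct vertices. To pin down $|D_1''|$ I would first record the following stratification fact: the $t$-th vertex of any radial path lies in $D_t$. Indeed, if $c=v_0,v_1,\ldots,v_{r(H)}$ is a radial path from a central vertex $c$ to the peripheral vertex $v_{r(H)}$, then $d(c,v_t)=t$ forces $d(v_t,\ZZ(H))\le t$, while any central vertex $c'$ with $d(c',v_t)<t$ would yield $d(c',v_{r(H)})<r(H)$, contradicting that $v_{r(H)}$ is eccentric for \emph{every} center of the UCG $H$. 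In particular each radial path meets $D_1$ in exactly one vertex, so every path in $\LL_i$ meets $D_1$ precisely in $x_i$; hence $D_1\cap V(\LL_i)=\{x_i\}$, giving $D_1''=\{x_1,\ldots,x_{k''}\}$ and $|D_1''|=k''$.

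Next, for each $i$ with $1\le i\le k''$ I would choose an $(x_i,\tilde p_i)$-dmpath $L_i$. Since $\tilde p_i\in P_i$, the distance computation in the proof of Lemma~\ref{lem:MCondI} gives $d(x_i,\tilde p_i)=r(H)-1$, so $L_i$ has length $r(H)-1$. Because $x_i\in D_1$ there is a central vertex $c$ adjacent to $x_i$, and prepending $c$ to $L_i$ produces a walk of length $r(H)$ from $c$ to the peripheral vertex $\tilde p_i$; as $d(c,\tilde p_i)=r(H)$, this walk is a shortest path of length $r(H)$ and hence a radial path containing $x_i$, i.e.\ a path in $\LL_i$. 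Consequently every vertex of $L_i$ lies in $V(\LL_i)$, and by the stratification fact the vertex of $L_i$ at distance $m$ from $c$ lies in $D_m$. Writing $v_m^{(i)}$ for this vertex, we obtain $v_m^{(i)}\in D_m\cap V(\LL_i)\subseteq D_m''$ for every $m$ with $1\le m\le r(H)-1$.

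Finally, Proposition~\ref{prop:uniquepaths} shows the paths $L_1,\ldots,L_{k''}$ are pairwise disjoint, so the vertices $v_m^{(1)},\ldots,v_m^{(k'')}$ are distinct for each fixed $m$. This exhibits $k''$ distinct vertices of $D_m''$, giving $|D_m''|\ge k''=|D_1''|$ as required. The distance bookkeeping is routine; the step that demands care — and the main obstacle — is verifying that each chosen dmpath $L_i$ genuinely embeds in $V(\LL_i)$ and that its $m$-th vertex lands in the correct stratum $D_m$ rather than collapsing into a nearer center. This is exactly where the uniform-central hypothesis, namely that all centers share the same eccentric set, is essential.
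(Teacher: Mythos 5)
Your proof is correct and takes essentially the same route as the paper's: select pairwise-disjoint $(x_i,\tilde p_i)$-dmpaths $L_1,\ldots,L_{k''}$ via Proposition \ref{prop:uniquepaths} and count the single vertex each contributes to $D_m''$, which yields $|D_1''|=k''\leq |D_m''|$. The only difference is that you make explicit two steps the paper leaves implicit, namely the stratification fact that the $t$-th vertex of a radial path lies in $D_t$ and the verification (by prepending an adjacent central vertex) that each $L_i$ actually lies inside $V(\LL_i)$, so $v_m^{(i)}\in D_m''$.
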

\begin{proof}
	For each $1\leq i\leq k''$ there exists $L_i$, a $(x_i, \tilde p_i)$-dmpath.  Let $\LL=\{L_1, \ldots, L_{k''}\}$ and $F_m=V(\LL)\cap D_m''$.  By proposition   \ref{prop:uniquepaths} $|F_1|=|F_m|$.  Then by the construction of $F_m$
	\[|D_1''|=|F_1|=|F_m|\leq |D_m''|.\]
\end{proof}

We often work with UCGs with few intermediate verticies. For such UCGs we can  say more about thier structure.

\begin{lemma}\label{lem:y_i-P_i}
	Let $H$ be a UCG with $P=\la\CP(H)\ra$ and $r=r(H)$.  Furthermore, let  $\overline{P}=\{P_1, \ldots, P_{k''}\}$ be a subcovering of the induced covering and $\{\tilde p_1, \ldots, \tilde p_{k''}\}$ an associated set of vertices as in lemma \ref{lem:subcover}.  If $|D_1''|= |D_{r-1}''|$ then for each $P_i\in \overline{P}$ there is a unique $y_i\in D_{r-1}''$ such that $y_i$ is adjacent to every vertex in $P_i$.  Furthermore, if $i\neq j$, then $y_i\neq y_j$.
\end{lemma}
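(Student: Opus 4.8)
The plan is to leverage the pairwise-disjoint distance-measuring paths furnished by Proposition \ref{prop:uniquepaths}, together with the counting hypothesis $|D_1''|=|D_{r-1}''|$, to completely determine the level-$(r-1)$ vertex of every radial path in $\bigcup_{i=1}^{k''}\LL_i$. Once that is done, both existence and uniqueness of $y_i$ fall out almost immediately.

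First I would record the bookkeeping facts. Every radial path is a $(c,p)$-dmpath of length $r$ from a central vertex, so its vertex at height $m$ lies in $D_m$; in particular the unique $D_1$-vertex of a path in $\LL_i$ is $x_i$, so $D_1''=\{x_1,\dots,x_{k''}\}$ and $|D_1''|=k''$. For each $i$ I pick a $(x_i,\tilde p_i)$-dmpath $L_i$; since $d(x_i,\tilde p_i)=r-1$, the path $L_i$ meets each of the levels $1,\dots,r$ exactly once, and prepending a central vertex adjacent to $x_i$ realizes $L_i$ inside a path of $\LL_i$, so each vertex of $L_i$ lies in the corresponding $D_m''$. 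Let $y_i$ denote the level-$(r-1)$ vertex of $L_i$. By Proposition \ref{prop:uniquepaths} the $L_i$ are pairwise disjoint, so the $y_i$ are $k''$ distinct elements of $D_{r-1}''$; the hypothesis $|D_{r-1}''|=|D_1''|=k''$ then forces $D_{r-1}''=\{y_1,\dots,y_{k''}\}$. This already yields the final distinctness claim $y_i\neq y_j$ for $i\neq j$.

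The heart of the argument, and the step I expect to be the main obstacle, is the claim that every radial path in $\LL_i$ passes through $y_i$ at level $r-1$. Given such a path $R$, let $w$ be its level-$(r-1)$ vertex; then $w\in D_{r-1}''$, so $w=y_j$ for some $j$. If $j\neq i$, I would splice the initial segment of $R$ from $x_i$ up to $w=y_j$ (length $r-2$) with the edge $y_j\tilde p_j$ coming from $L_j$; since the two pieces live at distinct levels this is a genuine path, of length $r-1$, and it is a dmpath because $d(x_i,\tilde p_j)=r-1$ by the UCG property. Extending it to a radial path through $x_i$ would give $\tilde p_j\in P_i$, contradicting the defining property $\tilde p_j\notin P_i$ from Lemma \ref{lem:subcover}. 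Hence $w=y_i$. The delicate points to check carefully here are that the spliced walk is a simple path and that its length indeed equals $d(x_i,\tilde p_j)$.

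Existence and uniqueness then follow cleanly. For existence, given $p\in P_i$ there is a radial path in $\LL_i$ ending at $p$, and by the claim its level-$(r-1)$ vertex is $y_i$, so $y_i$ is adjacent to $p$; as $p$ was arbitrary, $y_i$ is adjacent to all of $P_i$. For uniqueness, if some $y_m\in D_{r-1}''$ were adjacent to every vertex of $P_i$, then in particular $y_m\tilde p_i$ is an edge, and splicing $L_m$ (from $x_m$ to $y_m$) with this edge exhibits $\tilde p_i\in P_m$; Lemma \ref{lem:subcover} then forces $m=i$, so $y_i$ is the only such vertex.
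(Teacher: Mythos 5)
Your proposal is correct and follows essentially the same route as the paper's proof: define $y_i$ as the penultimate vertex of an $(x_i,\tilde p_i)$-dmpath, get distinctness from Proposition \ref{prop:uniquepaths} and the equality $D_{r-1}''=\{y_1,\ldots,y_{k''}\}$ by counting, then use the splice-with-$\tilde p_j$ argument to force the level-$(r-1)$ vertex of any radial path through $x_i$ to be $y_i$. The only difference is cosmetic: you spell out the uniqueness step (splicing $L_m$ with the edge $y_m\tilde p_i$), which the paper leaves implicit.
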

\begin{proof}
	Let $D_1''=\{x_1, \ldots, x_{k''}\}$. For each $i$, let $L_i$ be an $(x_i, \tilde p_i)$-dmpath and let $y_i$ be the vertex on $L_i$ adjacent to $\tilde p_i$.  Then $y_i\in D_{r-1}''$.  By proposition \ref{prop:uniquepaths}  $y_i\neq y_j$ when $i\neq j$. Furthermore $D_{r-1}''=\{y_1, \ldots, y_{k''}\}$.  We claim $y_i$ is adjacent to every vertex in $P_i$.
	
	For a vertex $p\in P_i$ there is a $c\in \ZZ(H)$ and a $(c, p)$-radial path $L$ that contains $x_i$. Let $y$ be the vertex on $L$ adjacent to $p$.   Since $y\in D_{r-1}''$, $y=y_j$ for some $1\leq j\leq k''$.  Since $y_j$ is adjacent to $\tilde p_j$, there is a $(x_i, \tilde p_j)$-dmpath of length $r-1$. Therefore, $\tilde p_j\in P_i$ which means $j=i$, $y=y_i$ and $y_i$ is adjacent to $p$.
\end{proof}

\begin{lemma}\label{lem:C-D_1}
	Let $H$ be a UCG with $C=\la\ZZ(H)\ra$ and $P=\la\CP(H)\ra$ and assume the induced covering $\overline{P}=\{P_1, \ldots, P_k\}$ satisfies the conditions of the subcovering in  lemma \ref{lem:subcover}.  That is $D_1=D_1''$.  Then  each vertex in $C$ is adjacent to each vertex in $D_1$.
\end{lemma}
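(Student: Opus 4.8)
The plan is to fix an arbitrary central vertex $c\in\ZZ(H)$ and an arbitrary $x_i\in D_1$ and to prove directly that $c$ is adjacent to $x_i$; since $c$ and $x_i$ are arbitrary this yields the claim. The lever I would use is the distinguished vertex $\tilde p_i\in P_i$ supplied by the hypothesis $D_1=D_1''$, which (via lemma \ref{lem:subcover}) satisfies $\tilde p_i\notin P_j$ for every $j\neq i$.

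First I would record that $\tilde p_i\in P_i\subset V(P)=\CP(H)$, so by the UCG property $\tilde p_i\in EC(c)$, and hence $d(c,\tilde p_i)=e(c)=r(H)=r$. Next I would choose a shortest $(c,\tilde p_i)$-path $L$; since $c$ is central and $L$ has length $r$, $L$ is a radial path from $C$ to $P$. Let $w$ be the second vertex of $L$, i.e. the neighbor of $c$ on $L$. The subpath of $L$ from $w$ to $\tilde p_i$ is again a shortest path, so $d(w,\tilde p_i)=r-1$.

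The key step is to locate $w$ in the stratification $D_0,\dots,D_r$. If $w$ were central, then $\tilde p_i\in EC(w)=\CP(H)$ would force $d(w,\tilde p_i)=r$, contradicting $d(w,\tilde p_i)=r-1$; hence $w$ is not central, and since $w$ is adjacent to the central vertex $c$ we obtain $w\in D_1$, say $w=x_j$. Now $L$ is a radial path from $C$ to $P$ containing $x_j$, so $L\in\LL_j$, and therefore $\tilde p_i\in V(P)\cap V(\LL_j)=P_j$. The uniqueness property of $\tilde p_i$ then forces $j=i$, i.e. $w=x_i$; since $w$ is adjacent to $c$, this is exactly the desired adjacency.

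I expect the only delicate point to be the identification $w\in D_1$, where one must rule out $w$ being a second central vertex on the radial path. This can be handled either by the cited fact that a radial path in a UCG meets the center in a single vertex \cite{Choi Manickam}, or self-containedly by the eccentricity contradiction above; everything else is a direct application of the definition of $\LL_j$ together with the non-redundancy of $\tilde p_i$ guaranteed by $D_1=D_1''$.
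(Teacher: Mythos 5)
Your proof is correct and follows essentially the same route as the paper's: both take a $(c,\tilde p_i)$-radial path and use the non-redundancy of $\tilde p_i$ (that $\tilde p_i\notin P_j$ for $j\neq i$) to force the vertex of $D_1$ on that path to be $x_i$ itself, adjacent to $c$. You simply make explicit the steps the paper compresses, in particular the justification that the neighbor of $c$ on the path lies in $D_1$ rather than in the center.
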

\begin{proof}
	Let $D_1=\{x_1, \ldots, x_k\}$ and  $\{\tilde p_1, \ldots, \tilde p_k\}$ be an associated set of vertices to $\overline{P}$ as in lemma \ref{lem:subcover}.  For each vertex $c$ in the center and each $\tilde p_i$, there is a $(c, \tilde p_i)$-radial path.  By construction of $\overline{P}$ and definition of $\tilde p_i$, each $(c, \tilde p_i)$-radial paths must contain $x_i$.  Therefore, $c$ and $x_i$ are adjacent.
\end{proof}

For a graph $P$, let $\text{cov}_A(P)$ be the smallest size of a covering of $P$ satisfying condition A.    Observe, from the definition of condition A it follows that  $\text{cov}_A(P)\geq 2$ for any graph $P$ with $r(P)>1$.  The following result is crirtical in obtaining lower bounds on $A_{ucg}(C, P)$.

\begin{prop}\label{Lboundk}
 If $H$ is a uniform central graph with $C=\langle \ZZ(H) \rangle$, $\langle \CP(H) \rangle=P$, $r=r(H)$ and $\kappa=\text{cov}_A(P)$, then  $|\II(H)|\geq \kappa(r-1)$.
\end{prop}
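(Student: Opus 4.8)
The plan is to bound $|\II(H)|$ stratum by stratum. Since $\II(H)=V(H)-(\ZZ(H)\cup\CP(H))$ is the disjoint union of the intermediate strata $D_1,\ldots,D_{r-1}$, it suffices to show that each $D_m$ (for $1\leq m\leq r-1$) contains at least $\kappa$ vertices. The governing intuition is that at least $\kappa$ pairwise disjoint radial paths must thread through $H$, and each such path punctures every stratum exactly once, forcing every intermediate level to be at least as wide as the number of paths.

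First I would invoke Lemma \ref{lem:MCondI} to obtain the induced covering $\{P_1,\ldots,P_k\}$ of $P$, which satisfies condition A, and then pass to the irredundant subcovering $\overline{P}=\{P_1,\ldots,P_{k''}\}$ furnished by Lemma \ref{lem:subcover}; crucially, this subcovering still satisfies condition A. Because $\overline{P}$ is then a covering of $P$ of size $k''$ satisfying condition A, and $\kappa=\text{cov}_A(P)$ is by definition the minimum size of \emph{any} such covering, we obtain $k''\geq\kappa$.

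Next I would count within each stratum. The surviving level-one vertices $x_1,\ldots,x_{k''}$ all lie in $D_1''$, so $|D_1''|\geq k''$. Corollary \ref{cor:Dm} --- whose proof rests on the disjointness of the associated dmpaths established in Proposition \ref{prop:uniquepaths} --- gives $|D_1''|\leq|D_m''|$ for every $1\leq m\leq r-1$. Combining these, $|D_m''|\geq|D_1''|\geq k''\geq\kappa$, and since $D_m''\subseteq D_m$ we conclude $|D_m|\geq\kappa$ for each intermediate stratum. Summing over the $r-1$ intermediate strata and using that $\II(H)$ is their disjoint union yields
\[
|\II(H)|=\sum_{m=1}^{r-1}|D_m|\geq\sum_{m=1}^{r-1}|D_m''|\geq (r-1)\kappa.
\]

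The one point demanding care --- and the step I expect to be the crux --- is the inequality $k''\geq\kappa$: one must confirm that the reduction of Lemma \ref{lem:subcover} genuinely preserves condition A, so that $\overline{P}$ actually competes in the minimization defining $\text{cov}_A(P)$, rather than merely preserving the covering property. Everything else is bookkeeping on the stratification, the substantive work having already been carried out in Proposition \ref{prop:uniquepaths} (disjointness of the paths) and Corollary \ref{cor:Dm} (monotonicity of the stratum sizes).
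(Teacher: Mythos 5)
Your proof is correct and takes essentially the same route as the paper's: pass to the subcovering of the induced covering (Lemmas \ref{lem:MCondI} and \ref{lem:subcover}), use that condition A is preserved so that $k''\geq\kappa$, chain this through Corollary \ref{cor:Dm} to get $\kappa\leq k''\leq|D_1''|\leq|D_m''|\leq|D_m|$ for each $1\leq m\leq r-1$, and sum over the strata. The crux you flag---that Lemma \ref{lem:subcover} preserves condition A---is indeed exactly what that lemma asserts, so your argument is complete as written.
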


\begin{proof}
Let $\{P_1, \ldots, P_{k''}\}$ be a subcover of the induced cover on $P$ from $H$ from lemmas \ref{lem:MCondI} and  \ref{lem:subcover}.  By definition of $\kappa$ and corollary \ref{cor:Dm} it follows that
\[\kappa\leq k''=|D_1''|\leq |D_m''|\leq |D_m|.\]
Since \[\II(H)=\bigcup_{m=1}^{r-1} D_m\]
the result follows.
\end{proof}

Proposition \ref{Lboundk} is used to obtain bounds on the radius of a UCG.

\begin{cor}\label{cor:rbound}
	If $H$ is a uniform central graph with $\langle \CP(H) \rangle=P$, $\kappa=\text{cov}_A(P)$ and $|\II(H)|\leq s\kappa+t$ where $s, t\in \Nb$ and $t<\kappa$, then $r(H)\leq s+1$.
\end{cor}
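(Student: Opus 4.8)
The plan is to read this off directly from Proposition \ref{Lboundk}, which already packages all the structural work. That proposition gives a lower bound on the number of intermediate vertices in terms of $\kappa$ and the radius; here we are handed an upper bound on $|\II(H)|$ of a very specific shape, $s\kappa+t$ with $t<\kappa$, and we simply sandwich the two estimates. The constraint $t<\kappa$ is exactly what lets us recover information about the quotient $r-1$ rather than just a crude inequality, so the whole argument is really an integer division-with-remainder observation dressed up as a corollary.

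Concretely, I would first invoke Proposition \ref{Lboundk} with $r=r(H)$ and $\kappa=\text{cov}_A(P)$ to obtain
\[\kappa(r-1)\leq |\II(H)|.\]
Combining this with the hypothesis $|\II(H)|\leq s\kappa+t$ yields
\[\kappa(r-1)\leq s\kappa+t.\]
Now I would argue by contradiction: suppose $r(H)>s+1$. Since $r$ and $s$ are integers, this forces $r-1\geq s+1$, and hence
\[\kappa(r-1)\geq \kappa(s+1)=s\kappa+\kappa.\]
Chaining these bounds gives $s\kappa+\kappa\leq s\kappa+t$, i.e.\ $\kappa\leq t$, which directly contradicts the assumption $t<\kappa$. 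Therefore $r(H)\leq s+1$, as claimed.

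There is no real obstacle here; the entire content is in Proposition \ref{Lboundk}, and the only thing to be careful about is the integrality step. One should note that $\kappa\geq 2$ for any admissible $P$ (as remarked before Proposition \ref{Lboundk}, since $r(P)>1$ by Corollary \ref{cor:r>1}), so dividing by $\kappa$ is harmless and $s\kappa+t$ is genuinely the Euclidean-division form one expects. The inequality-based phrasing above avoids even needing to divide, which keeps the proof robust and short.
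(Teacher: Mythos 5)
Your proof is correct and is essentially the paper's own argument: the paper likewise observes $(s+1)\kappa > s\kappa + t$ (which is exactly your $t<\kappa$ integrality step) and concludes via the contrapositive of Proposition \ref{Lboundk}. Your write-up just spells out the contradiction that the paper leaves implicit.
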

\begin{proof}
	Observing $(s+1)\kappa >s\kappa+t$ the result follows from the contrapositive of  proposition \ref{Lboundk}.
\end{proof}

We now  construct a graph with center $C$ and centered periphery $P$. Let $C$ and $P$ be graphs, and $\rho\in \Nb$. Suppose $\overline{P}=\{P_1, \ldots, P_k\}$ is a covering of $P$.  Define a graph $G=\GG(C, P, \overline{P}, \rho)$ as follows:   \[V(G)=V(C) \cup V(P) \cup \{x_{i,j}: \, 0\leq i\leq k, 1\leq j\leq \rho\}\]
and  $ab$ is an edge in $G$ if and only if one of the following occurs
\begin{enumerate}
    \item $ab$ is an edge of $C$.
    \item $ab$ is an edge of $P$.
    \item $a  $ is a vertex of $C$ and $b=x_{i, 1}$ for some $i$ with $0\leq i \leq k$.
    \item $a\in P_i$ and $b=x_{i,\rho}$ for some $i$ with $1\leq i\leq k$.
    \item $a=x_{i,j}$ and $b=x_{i,j+1}$ for $i$ and $j$ with $0\leq i\leq k, 1\leq j\leq \rho-1$
\end{enumerate}

\begin{figure}[!htbp]
\[\xy
(0,10)*\ellipse(20,4){-};
%(-20,20)*{\bullet};(-10,20)*{\bullet};(20,20)*{\bullet};(0,20)*{\cdots};
(0,20)*{C};
%D_1
(-15,35)*{\bullet};(-5,35)*{\bullet};(8,35)*{\cdots};(15,35)*{\bullet};(-35,35)*{\bullet};
(-19,35)*{x_{1,1}};(0,35)*{x_{2,1}};(20,35)*{x_{k,1}};(-39,35)*{x_{0,1}};
%D_2
(-15,40)*{\bullet};(-5,40)*{\bullet};(8,40)*{\cdots};(15,40)*{\bullet};(-35,40)*{\bullet};
(-15,35)*{};(-15,40)*{}**\dir{-};(-5,35)*{};(-5,40)*{}**\dir{-};(15,35)*{};(15,40)*{}**\dir{-};(-35,35)*{};(-35,40)*{}**\dir{-};
(-19,40)*{x_{1,2}};(0,40)*{x_{2,2}};(20,40)*{x_{k,2}};(-39,40)*{x_{0,2}};
%D_3
(-15,45)*{\bullet};(-5,45)*{\bullet};(8,45)*{\cdots};(15,45)*{\bullet};(-35,45)*{\bullet};
(-15,40)*{};(-15,45)*{}**\dir{.};(-5,40)*{};(-5,45)*{}**\dir{.};(15,40)*{};(15,45)*{}**\dir{.};(-35,40)*{};(-35,45)*{}**\dir{.};
(-19,45)*{x_{1,\rho}};(0,45)*{x_{2,\rho}};(20,45)*{x_{k,\rho}};(-39,45)*{x_{0,\rho}};
%Connecting to D_1
%(-20,20)*{};(-15,35)*{}**\dir{-};(-20,20)*{};(-5,35)*{}**\dir{-};(-20,20)*{};(15,35)*{}**\dir{-};(-20,20)*{};(-35,35)*{}**\dir{-};
(-12,25)*{};(-15,35)*{}**\dir{-};(-6,25)*{};(-15,35)*{}**\dir{-};(0,25)*{};(-15,35)*{}**\dir{-};
(-3,25)*{};(-5,35)*{}**\dir{-};(3,25)*{};(-5,35)*{}**\dir{-};(9,25)*{};(-5,35)*{}**\dir{-};
(6,25)*{};(15,35)*{}**\dir{-};(10,25)*{};(15,35)*{}**\dir{-};(14,25)*{};(15,35)*{}**\dir{-};
(-20,25)*{};(-35,35)*{}**\dir{-};(-14,25)*{};(-35,35)*{}**\dir{-};(-8,25)*{};(-35,35)*{}**\dir{-};
%P
%(0,28)*\ellipse(30,6){-};
(0, 66)*{P};
(-26,57)*{P_1};
(-15,45)*{};(-28,55)*{}**\dir{-};(-25,55)*{};(-15,45)*{}**\dir{-};(-15,45)*{};(-22,55)*{}**\dir{-};
(-15,57)*{P_2};
(-5,45)*{};(-18,55)*{}**\dir{-};(-15,55)*{};(-5,45)*{}**\dir{-};(-5,45)*{};(-12,55)*{}**\dir{-};
(24,57)*{P_k};
(15,45)*{};(28,55)*{}**\dir{-};(25,55)*{};(15,45)*{}**\dir{-};(15,45)*{};(22,55)*{}**\dir{-};
(5,55)*{\cdots};
(-13,28.3)*\ellipse(6.5,3){-};(12,28.3)*\ellipse(6.5,3){-};(-7.5,28.3)*\ellipse(6.5,3){-};
(-34,62)*{};(27,62)*{};**\frm{^)};
\endxy\]
\caption{$G=\GG(C, P, \overline{P},\rho)$}\label{fig:G0}
\end{figure}
\begin{prop}\label{G0UCG}
 For two given graphs  $C$ and $P$, let $\overline{P}=\{P_1, \ldots, P_k\}$ be a covering of $P$ satisfying condition A, and   $\rho \geq \text{min}\{\text{diam}(C), 2\}$.  Then $G=\GG(C,P, \overline{P}, \rho)$ is a  UCG with radius $\rho+1$, $\langle \ZZ(G) \rangle = C$ and $\langle \CP(G) \rangle  =P$.
\end{prop}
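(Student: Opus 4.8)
The plan is to pin down all relevant distances in $G$ from a single level function and then read off eccentricities. Define $f\colon V(G)\to\Nb$ by $f(v)=0$ for $v\in V(C)$, $f(x_{i,j})=j$, and $f(p)=\rho+1$ for $p\in V(P)$. A quick inspection of the five edge types of the construction shows $|f(a)-f(b)|\le 1$ for every edge $ab$, so $f$ is $1$-Lipschitz and $d_G(u,v)\ge |f(u)-f(v)|$. Using this together with the obvious paths along the chains, I would first establish, for every $c\in V(C)$: that $d_G(c,x_{i,j})=j\le\rho$; that $d_G(c,p)=\rho+1$ for all $p\in V(P)$ (the upper bound uses that $\overline P$ covers $P$, so one may pick a chain $i$ with $p\in P_i$ and go $c\to x_{i,1}\to\cdots\to x_{i,\rho}\to p$); and that $d_G(c,c')\le\min\{\text{diam}(C),2\}\le\rho$, where the bound $2$ comes from routing $c\to x_{i,1}\to c'$ through a hub vertex and the final inequality is exactly the hypothesis on $\rho$. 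Consequently $e(c)=\rho+1$, and since every non-peripheral vertex is at distance at most $\rho$ from $c$, in fact $EC(c)=V(P)$ for every $c\in V(C)$.

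Next I would show that every vertex outside $V(C)$ has eccentricity at least $\rho+2$, arguing by vertex type and repeatedly using that the dead-end chain ($i=0$) and the chain interiors are reached only through $C$. For a peripheral $p$, since $x_{0,1}$ joins the rest of $G$ only through $C$ and $d_G(p,V(C))=\rho+1$, we get $d_G(p,x_{0,1})=\rho+2$. The same cut-vertex argument gives $d_G(x_{0,j},p)=j+\rho+1\ge\rho+2$, and, for $i\ge 1$ and $j\ge 2$, $d_G(x_{i,j},x_{0,\rho})=j+\rho\ge\rho+2$. The only remaining case is $x_{i,1}$ with $i\ge1$, where the distance to $x_{0,\rho}$ is merely $\rho+1$; this is precisely where condition A is needed. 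By condition A choose $p^{*}\notin P_i$ with $d_P(P_i,p^{*})\ge 2$, and I claim $d_G(x_{i,1},p^{*})\ge\rho+2$.

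To prove the claim I would track the first vertex $q$ at which an $x_{i,1}$–$p^{*}$ path enters $V(P)$; such $q$ is entered from some $x_{j,\rho}$ with $q\in P_j$, and the initial segment up to $q$ lies in $G-V(P)$ except for $q$. If $j=i$, the segment to $x_{i,\rho}$ has length at least $\rho-1$, so reaching $q$ costs at least $\rho$; as $q\in P_i$ while $d_P(P_i,p^{*})\ge2$, the vertices $q$ and $p^{*}$ are non-adjacent in $G$, forcing two more edges and a total of at least $\rho+2$. If $j\ne i$, then inside $G-V(P)$ one has $d_G(x_{i,1},x_{j,\rho})=\rho+1$ (one must descend to $C$ and climb a different chain), so the prefix to $q$ already has length at least $\rho+2$. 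Either way the claim holds and $e(x_{i,1})\ge\rho+2$. I expect this to be the main obstacle: the Lipschitz potential by itself only yields $d_G(x_{i,1},p^{*})\ge\rho$, and the sharpening to $\rho+2$ genuinely uses condition A.

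Finally I would assemble the conclusions. The two eccentricity computations give $r(G)=\rho+1$ and $\ZZ(G)=\{v:e(v)=\rho+1\}=V(C)$, whence $\langle\ZZ(G)\rangle=C$ because the construction adds no edges beyond those of $C$ inside $V(C)$. Since $EC(c)=V(P)$ for every central $c$, the eccentric set is the same for all centers, so $G$ is a uniform central graph with $\CP(G)=V(P)$; and as the only edges inside $V(P)$ are those of $P$, we obtain $\langle\CP(G)\rangle=P$, completing the proof.
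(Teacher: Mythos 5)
Your proof is correct and takes essentially the same route as the paper's: the identical case-by-case eccentricity analysis with the identical witnesses (the dead-end chain for $p\in V(P)$ and for $x_{0,j}$, the vertex $x_{0,\rho}$ for $x_{i,j}$ with $j\geq 2$, a condition-A vertex $p^{*}$ for $x_{i,1}$, and the exact computation $e(c)=\rho+1$ with $EC(c)=V(P)$ for $c\in V(C)$). The only difference is one of exposition: you supply the supporting machinery (the $1$-Lipschitz level function and the first-entry/cut-vertex arguments) for the distance bounds that the paper simply asserts ``from the construction of $G$.''
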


\begin{proof}
From the construction of $G$, we can show the following:
\begin{enumerate}[i)]
    \item For all $j=1, \dots, \rho$ and any vertex $p$ in $P$, $d(x_{0,j},p)\geq \rho+2$, and so $e(x_{0,j}) \geq \rho+2$ and $e(p) \geq \rho+2$.
    \item For all $i$ and $j$ with $1\leq i \leq k$ and $2 \leq j \leq \rho$, $d(x_{i,j}, x_{0, \rho})\geq \rho+2$, and so $e(x_{i,j}) \geq \rho+2$.
    \item Since the covering $\overline{P}$ satisfies condition A, for each $i$ with $1\leq i \leq k$ there exists a vertex $p$ in $P_j$ for some $j \neq i$ satisfying $d_P(P_i, p) \geq 2$.  Thus $d(x_{i,1}, p) \geq \rho+2$ and  $e(x_{i,1}) \geq \rho+2$.
    \item For a vertex $c$ in $C$,  $d(c,x_{i,j})\leq \rho$, $d(c,p)=\rho+1$ for all $p$ in $P$, and  $d(c,c')\leq 2$ for any vertex $c'$ in $C$.  This implies $e(c)=\rho+1$ and $EC(c)=V(P)$.
\end{enumerate}
\noindent
Therefore, $G=\GG(C, P, \overline{P}, \rho)$ is a  UCG with radius $\rho+1$, $\la\ZZ(G)\ra=C$ and $\langle \CP(G) \rangle =P$.

\end{proof}

We end this section with a result about conditions when a spanning subgraph of a UCG is still a UCG.

\begin{lemma}\label{lem:spanUCG}
	Let $H$ be a UCG with $V(C)=\ZZ(H)$ and let $G$ be a spanning subgraph of $H$.  Then $G$ is a UCG with $V(C)=\ZZ(G)$ and $\CP(G)=\CP(H)$ if for all $c\in V(C)$ and $x\in V(G)$, $d_G(c, x)=d_H(c,x)$.
\end{lemma}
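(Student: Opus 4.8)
The goal is to show that under the hypothesis that distances from central vertices are preserved, the spanning subgraph $G$ inherits the UCG structure of $H$. The plan is to track how the three defining data---the center $\ZZ$, the eccentricities, and the eccentric vertex sets $EC(c)$---transfer from $H$ to $G$, using only the distance-preservation assumption for pairs $(c,x)$ with $c\in V(C)$.

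First I would compute eccentricities of central vertices in $G$. For $c\in V(C)$ we have $d_G(c,x)=d_H(c,x)$ for all $x$, so $e_G(c)=\max_x d_G(c,x)=\max_x d_H(c,x)=e_H(c)=r(H)$, and moreover $EC_G(c)=EC_H(c)=\CP(H)$. Since this holds for every $c\in V(C)$, once I know these vertices are exactly the center of $G$, the uniform-central property and the identity $\CP(G)=\CP(H)$ follow immediately. So the real content is the claim $\ZZ(G)=V(C)$.

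The key step is the lower bound on eccentricities of non-central vertices: I must show $e_G(v)\ge r(H)+1$ for every $v\in V(G)\setminus V(C)$, so that no such $v$ can lie in the center of $G$. Here I would exploit monotonicity of distance under passing to a spanning subgraph, namely $d_G(u,w)\ge d_H(u,w)$ always. For $v\notin V(C)$, pick a central vertex $c$; in $H$ we have $e_H(v)\ge r(H)+1$ because $H$ is a UCG and $v$ is not central (in a UCG every non-central vertex has eccentricity strictly larger than the radius, as the center is exactly the set of radius-eccentricity vertices). I want to conclude $e_G(v)\ge r(H)+1$ as well. The cleanest route is: there exists an eccentric vertex $w$ of $v$ in $H$ with $d_H(v,w)\ge r(H)+1$, and then $d_G(v,w)\ge d_H(v,w)\ge r(H)+1$, giving $e_G(v)\ge r(H)+1>r(G)=r(H)$. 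Combined with $e_G(c)=r(H)$ for central $c$, this shows the minimum eccentricity in $G$ is exactly $r(H)$, attained precisely on $V(C)$, so $r(G)=r(H)$ and $\ZZ(G)=V(C)$.

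I expect the main obstacle to be justifying that every non-central vertex of $H$ has eccentricity at least $r(H)+1$, i.e.\ that the center of a UCG is exactly the set of vertices of minimal eccentricity with no ``ties'' at radius from outside the center---this is just the definition of center together with the fact that $V(C)=\ZZ(H)$ is given, so any $v\notin V(C)$ has $e_H(v)>r(H)$. Once that is in hand, the monotonicity inequality $d_G\ge d_H$ does all the work and the remaining verifications (that $V(C)=\ZZ(G)$ induces the same subgraph $C$, and that $\CP(G)=\CP(H)$) are bookkeeping using $EC_G(c)=EC_H(c)$ for each central $c$.
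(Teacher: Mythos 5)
Your proof is correct and follows essentially the same route as the paper's: both rest on the monotonicity $d_G \geq d_H$ (hence $e_G \geq e_H$) for spanning subgraphs, combined with the hypothesis to show central vertices keep eccentricity $r(H)$ and eccentric set $\CP(H)$ while every $v \notin V(C)$ satisfies $e_G(v) \geq e_H(v) > r(H)$. Your write-up simply makes explicit the details that the paper's terse proof leaves to the reader.
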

\begin{proof}
	Since $E(G)\subset E(H)$, the  eccentricity of a vertex $v$ in $G$ are greater than or equal to the eccentricity of $v$ in $H$.  The assumption that $d_G(c, x)=d_H(c,x)$ for all $c\in V(C)$ and $x\in V(G)$ implies $\ZZ(G)=V(C)$ and $EC(c)$ in $G$ is $\CP(H)$.  Hence, $G$ is a UCG with $V(C)=\ZZ(G)$ and  $\CP(G)=\CP(H)$.
\end{proof}

\section{When $C=K_n$ with $n\geq 2$}\label{sec:C=Kn}
In this section we compute appendage numbers when the center $C$ is a complete graph $K_n$ with $n\geq 2$  in terms of the size of a smallest covering for the centered periphery.

 We introduce a new condition on coverings.  Let $P$ be a graph with  covering $\overline{P}=\{P_1, \ldots, P_k\}$.  We say $(P, \overline{P})$ satisfies
\begin{description}
	\item[Condition B:] if for every $1\leq i\leq k$ and for each $p\in P_i$ either
	\begin{enumerate}
		\item there is a vertex $ p' \not\in P_i$ with $d(p, p')\geq 3$\label{d3}, or
		\item there is a $j\neq i$ satisfying $d(p, P_j)\geq 2$. \label{Pj=2}
	\end{enumerate}
\end{description}
Once again, we say the covering $\overline{P}$ satisfies condition B when there is no confusion about $P$.  Also define $\text{cov}_{AB}(P)$ to be the smallest size of a covering of $P$ satisfying both condition A and condition B.

Before determining appendage numbers it is necessary to prove the existence of coverings satisfying conditions A and B.

\begin{lemma}\label{lem:covexist}
Let $P$ be a graph with $r(P)\geq 2$. Then there exists a covering of $P$ satisfying conditions A and B.
\end{lemma}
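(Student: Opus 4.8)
The plan is to exhibit one explicit covering and to check the two conditions by hand, rather than to argue abstractly. The covering I would use is the finest one available, namely the collection of singletons $\overline{P} = \{\{v\} : v \in V(P)\}$. This is visibly a covering of $P$, and the whole point is that the hypothesis $r(P) \geq 2$ translates immediately into the eccentricity statement that every vertex $v$ satisfies $e(v) \geq r(P) \geq 2$; hence for each $v$ there is a vertex $w$ with $d(v, w) = e(v) \geq 2$. This single observation is what drives both verifications.

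First I would check condition A. I fix a block $\{v\}$ of the covering and take $w$ with $d(v, w) = e(v) \geq 2$. Then $w \neq v$, so $w \notin \{v\}$, and $d(\{v\}, w) = d(v, w) \geq 2$, which is exactly what condition A demands for the index corresponding to $\{v\}$.

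Next I would check condition B. Since the only element of the block $\{v\}$ is $p = v$ itself, it suffices to verify the disjunction for this single $p$. I would invoke alternative (2): the block $\{w\}$, with $w$ as above, is distinct from $\{v\}$ and satisfies $d(v, \{w\}) = d(v, w) \geq 2$. Thus alternative (2) of condition B holds for every block, and condition B is satisfied; combining the two checks gives the lemma.

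I do not expect a genuine obstacle here: existence is cheap precisely because the singleton covering converts both conditions into the statement ``every vertex has another vertex at distance at least two,'' which is exactly the content of $r(P) \geq 2$. The only pitfall to avoid is reaching for a coarser, more natural covering such as the closed neighborhoods $\{N[v]\}$ in the hope of economy, since that covering can already fail condition A on $C_4$, where the vertex antipodal to $v$ sits at distance $1$ from a neighbor of $v$. Because the lemma asserts only existence, with the genuine optimization of the covering size $\text{cov}_{AB}(P)$ deferred to the later sections, the safe move is to take the finest covering, for which both conditions are immediate.
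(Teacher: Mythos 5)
Your proposal is correct and takes the same route as the paper: the paper's proof also uses the singleton covering $\{\{p_1\},\ldots,\{p_k\}\}$ and observes that $r(P)\geq 2$ immediately yields conditions A and B. Your write-up simply makes explicit the verification (via $e(v)\geq r(P)\geq 2$ and alternative (2) of condition B) that the paper leaves to the reader.
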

\begin{proof}
Let $V(P)=\{p_1, \ldots, p_k\}$.  Then $\overline{P}=\{\{p_1\}, \ldots, \{p_k\}\}$ is a covering of $P$ and  $\overline{P}$ satisfies conditions $A$ and $B$  because $r(P)\geq 2$.
\end{proof}

We now find the appendage numbers in terms of $\kappa=\text{cov}_A(P)$.

\begin{prop}\label{prop:Knbound}
$\kappa\leq A_{ucg}(K_n, P)\leq \kappa+1$ where $\kappa=\text{cov}_A(P)$.
\end{prop}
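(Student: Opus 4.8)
The plan is to prove the two inequalities separately, using the lower-bound machinery already established for the left inequality and the explicit construction $\GG$ for the right.

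\textbf{Lower bound.} For the inequality $\kappa \leq A_{ucg}(K_n, P)$, I would argue that it holds for arbitrary centers, not just $C = K_n$. Let $H$ be any UCG realizing $\langle \ZZ(H)\rangle = K_n$ and $\langle \CP(H)\rangle = P$; such an $H$ exists precisely when $A_{ucg}(K_n, P)$ is finite, so I may assume finiteness. By Lemma~\ref{lem:MCondI}, $H$ induces a covering $\overline{P} = \{P_1, \ldots, P_k\}$ of $P$ satisfying condition A, where $k = |D_1|$. By the definition of $\kappa = \text{cov}_A(P)$ as the \emph{smallest} size of a condition-A covering, we get $\kappa \leq k = |D_1| \leq |\II(H)|$, the last inequality because $D_1 \subseteq \II(H)$. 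Taking the minimum over all realizing $H$ yields $\kappa \leq A_{ucg}(K_n, P)$. (In fact this is essentially the content of Proposition~\ref{Lboundk} specialized to extract only the $m=1$ term.)

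\textbf{Upper bound.} For $A_{ucg}(K_n, P) \leq \kappa + 1$, the plan is to build an explicit UCG using the $\GG$ construction and count its intermediate vertices. Choose a covering $\overline{P} = \{P_1, \ldots, P_\kappa\}$ of $P$ realizing condition A with $|\overline{P}| = \kappa$; this exists by Lemma~\ref{lem:covexist} (which gives a condition-A covering, since conditions A and B together certainly include A). Since $C = K_n$ with $n \geq 2$, we have $\text{diam}(C) = 1$, so $\rho = \min\{\text{diam}(C), 2\} = 1$. Form $G = \GG(K_n, P, \overline{P}, 1)$. By Proposition~\ref{G0UCG}, $G$ is a UCG with $\langle \ZZ(G)\rangle = K_n$ and $\langle \CP(G)\rangle = P$. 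It remains to count the intermediate vertices of $G$: with $\rho = 1$, the added vertices are $\{x_{i,1} : 0 \leq i \leq \kappa\}$, of which there are $\kappa + 1$. These $\kappa + 1$ vertices constitute $\II(G)$, so $A_{ucg}(K_n, P) \leq |\II(G)| = \kappa + 1$.

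\textbf{Main obstacle.} The routine arithmetic of counting $\kappa + 1$ intermediate vertices and matching $\rho$ to $\text{diam}(C) = 1$ is straightforward once the $\GG$ construction is invoked. The subtler point, and the one I would check most carefully, is the lower-bound direction: one must be sure that every realizing $H$ genuinely forces at least $\kappa$ intermediate vertices, which rests on the induced covering of Lemma~\ref{lem:MCondI} satisfying condition A together with the minimality defining $\kappa$. The gap of $1$ between the two bounds reflects the ``extra'' vertex $x_{0,1}$ needed in the construction (the branch attached to no $P_i$), and I expect the real work of the section — narrowing this gap by deciding when $A_{ucg}(K_n, P)$ equals $\kappa$ versus $\kappa + 1$ — to be deferred to subsequent results rather than resolved here.
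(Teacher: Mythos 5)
Your proposal is correct and follows essentially the same route as the paper: the upper bound via $\GG(K_n, P, \overline{P}, 1)$ and Proposition~\ref{G0UCG} with $|\II(G)|=\kappa+1$, and the lower bound via the induced condition-A covering, which is exactly the content of Proposition~\ref{Lboundk} (your argument just unpacks its $m=1$ case, as you note). The only point worth flagging is that both your $D_1\subseteq \II(H)$ step and the paper's appeal to $|\II(H)|\geq\kappa(r-1)$ tacitly use $r(H)\geq 2$, which holds because a UCG with center $K_n$, $n\geq 2$, cannot have radius $1$ (two mutually adjacent central vertices would then have different eccentric sets).
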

\begin{proof}
 By lemma \ref{lem:covexist} there is a covering  of $P$ satisfying condition A, so let $\overline{P}=\{P_1, P_2, \ldots, P_\kappa\}$ be  a smallest covering of $P$ satisfying condition A. By proposition \ref{G0UCG}, the graph $G=\GG(K_n, P, \overline{P}, 1)$ is a UCG with radius 2 and $|\II(G)|=\kappa+1$, and thus $A_{ucg}(K_n, P)\leq \kappa+1$.  The lower bound is from proposition \ref{Lboundk}.

\end{proof}

\begin{thm}\label{thm:Knrange}
Let $\kappa=\text{cov}_A(P)$.  Then $A_{ucg}(K_n, P)=\kappa$ if and only if $\text{cov}_{AB}(P)=\kappa$.
\end{thm}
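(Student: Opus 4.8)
The plan is to start from the bounds $\kappa\le A_{ucg}(K_n,P)\le\kappa+1$ of Proposition \ref{prop:Knbound}, which reduce the statement to: \emph{a UCG with center $K_n$, centered periphery $P$, and exactly $\kappa$ intermediate vertices exists if and only if some covering of size $\kappa$ satisfies both A and B}. Since condition B refines condition A, we always have $\text{cov}_{AB}(P)\ge\text{cov}_A(P)=\kappa$, so the real content is the existence of a size-$\kappa$ covering satisfying B. A preliminary observation drives both directions: if $H$ realizes $A_{ucg}(K_n,P)=\kappa$ then $|\II(H)|=\kappa$, and Proposition \ref{Lboundk} gives $\kappa\ge\kappa(r(H)-1)$, forcing $r(H)\le 2$; as $\kappa\ge 2$ the set $\II(H)$ is nonempty, ruling out $r(H)=1$, so $r(H)=2$ and $\II(H)=D_1$.

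For the forward direction I would take such an $H$ with its induced covering $P_i=N_H(x_i)\cap V(P)$, which has size $\kappa$ and satisfies condition A by Lemma \ref{lem:MCondI}, and show it also satisfies B. Fix $p\in P_i$. Because $p$ is peripheral it is not central, so $e_H(p)\ge 3$ and some $w$ satisfies $d_H(p,w)\ge 3$. Every central vertex lies at distance exactly $2$ from $p$, so $w$ is some $x_j$ or some $p'\in V(P)$. If $w=x_j$, then $d_P(p,P_j)\ge 2$ (otherwise $p$ or a $P$-neighbor of $p$ inside $P_j$ would place $x_j$ within distance $2$), and $j\neq i$ since $p\in P_i$; this is option (2) of B. If $w=p'\in V(P)$, then $d_P(p,p')\ge d_H(p,p')\ge 3$ and $p,p'$ share no part (a common part $P_m$ would give the length-$2$ path $p\,x_m\,p'$), so $p'\notin P_i$; this is option (1). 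Hence B holds for every pair $(i,p)$, giving $\text{cov}_{AB}(P)\le\kappa$ and therefore equality.

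The converse is where I expect the main obstacle. Starting from a covering $\overline P=\{P_1,\dots,P_\kappa\}$ satisfying A and B, the natural attempt is to build $H=\GG(K_n,P,\overline P,1)$ with the vertex $x_0$ (the unique vertex of $\GG$ not joined to $P$) deleted, so that $|\II(H)|=\kappa$. The difficulty is that deleting $x_0$ removes precisely the vertex which, in Proposition \ref{G0UCG}, forces every peripheral vertex to have eccentricity $\ge 3$, and this must now be recovered from condition B. This \emph{fails for a general AB-covering}: since $x_i$ is joined to all of $P_i$, any two vertices sharing a part are at distance $2$ in $H$, so if a part contains two vertices at mutual $P$-distance $\ge 3$, one of their eccentricities collapses to $2$ and it becomes central. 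Condition B only supplies a far witness outside a single part $P_i$, and that witness may lie in another part also containing $p$, recreating the shortcut.

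The remedy is to disjointify before building $H$: assign each vertex of $P$ to exactly one of the parts containing it, producing disjoint $Q_i\subseteq P_i$. Shrinking parts preserves both conditions, since an external far vertex for $P_i$ stays external and at least as far for $Q_i$ (condition A and option (1) of B), while $d_P(p,Q_j)\ge d_P(p,P_j)$ (option (2) of B); and the result cannot have fewer than $\kappa$ nonempty parts without contradicting $\text{cov}_A(P)=\kappa$, so $\{Q_1,\dots,Q_\kappa\}$ is a partition still satisfying A and B. Now each $p$ lies in a \emph{unique} $Q_i$, so the witness B provides for $(i,p)$ avoids every part of $p$: option (1) gives $p'$ in a different part with $d_H(p,p')=3$, and option (2) gives $x_j$ with $d_P(p,Q_j)\ge 2$, hence $d_H(p,x_j)=3$. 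Combined with condition A making each $x_i$ non-central (exactly as in part (iii) of Proposition \ref{G0UCG}) and the computation $e(c)=2$, $EC(c)=V(P)$ of part (iv), the $x_0$-free graph is a UCG with center $K_n$, centered periphery $P$, and $\kappa$ intermediate vertices. Thus $A_{ucg}(K_n,P)\le\kappa$, and with Proposition \ref{prop:Knbound} we conclude $A_{ucg}(K_n,P)=\kappa$.
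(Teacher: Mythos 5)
Your forward direction is essentially identical to the paper's: both force $r(H)=2$ from Proposition \ref{Lboundk} (via Corollary \ref{cor:rbound}), take the induced covering of size $|D_1|=|\II(H)|=\kappa$, and convert a witness $w$ with $d_H(p,w)\ge 3$ into option (1) or (2) of condition B according to whether $w$ is peripheral or lies in $D_1$. The converse is where you part ways, and your instinct about the obstacle is exactly right --- in fact you have located a genuine gap in the paper's own proof. The paper takes an \emph{arbitrary} smallest AB-covering $\overline{P}$, forms $G=\GG(K_n,P,\overline{P},1)-\{x_{0,1}\}$, and simply asserts that condition B lets one ``verify'' $e(p)\ge 3$ for all $p\in V(P)$; no disjointification is performed. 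That assertion is false for overlapping coverings: for $P=C_8$ with vertices $v_0,\dots,v_7$, the covering $P_1=\{v_0,v_5,v_6,v_7\}$, $P_2=\{v_0,v_1,v_2,v_3,v_4\}$ is a smallest covering satisfying A and B (the B-witnesses for $v_0$ being $v_3\in P_2\setminus P_1$ and $v_5\in P_1\setminus P_2$), yet in the paper's $G$ the vertex $v_0$ is adjacent to both $x_{1,1}$ and $x_{2,1}$ and hence within distance $2$ of every vertex of $G$, so $v_0$ invades the center and $G$ is not a UCG with center $K_n$. Your repair --- shrink $\overline{P}$ to disjoint sets $Q_i\subseteq P_i$, note that shrinking parts preserves conditions A and B while minimality of $\kappa=\text{cov}_A(P)$ keeps every $Q_i$ nonempty, then run the construction on the resulting partition, where uniqueness of the part containing $p$ guarantees the B-witness survives as a vertex at $G$-distance at least $3$ --- is precisely the lemma the paper's argument needs and omits, and with it the construction does go through. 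Two cosmetic points only: a part containing two mutually far vertices does not by itself collapse an eccentricity (the failure needs every part to be near $p$ and every far vertex to share a part with $p$, which is what your next sentence correctly isolates), and the distances you assert as $=3$ in the final verification need only be, and in one case are only provably, $\ge 3$. The theorem itself stands; your proof is complete and is strictly more rigorous than the paper's at the converse step.
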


\begin{proof}
Suppose $\text{cov}_{AB}(P)=\kappa$ and let $\overline{P}=\{P_1, \ldots, P_\kappa\}$ be a smallestl covering of $P$ with respect to conditions A and B.  Also, let $G=\GG(K_n, P, \overline{P}, 1)-\{x_{0,1}\}$.  Since $\overline{P}$ satisfies condition B, one can verify that for $G$, $e(p) \geq 3$ for all $p\in V(P)$. Using a similar argument to  proposition \ref{G0UCG} we can show $G$ is a UCG with $r(G)=2$, $\la\CP(G)\ra=P$, $\la\ZZ(G)\ra=K_n$ and  $|\II(G)|=\kappa$.

Now assume $A_{ucg}(K_n, P)=\kappa$.  Let $H$ be a UCG with $P=\langle \CP(H) \rangle , \langle \ZZ(H) \rangle =K_n$ and $|\II(H)|=\kappa$.  From corollary \ref{cor:rbound}, $r(H)=2$.
Let $\{P_1, \ldots , P_{k}\}$ be the induced covering of $P$ as in lemma \ref{lem:MCondI}.  We know
\[\kappa\leq k= |D_1|= |\II(H)|= \kappa\]
 and so $k=\kappa$.  Let  $D_1=\{x_1,\ldots , x_k\}$, where $x_i$ is associated to $P_i$.

 Suppose $p$ is a vertex in $P_i$.   Since $e(p)\geq 3$ there is an $x\in V(H)$ with $d_H(p,x)\geq 3$.   Note, $x\not\in P_i$, because  all vertices in $P_i$ are adjacent to $x_i$.    If $x \in P_j$ for some $j \neq i$, then  condition B-1 is satisfied by definition. Finally, if $x=x_j\in D_1$ for some $j\neq i$, then $d_P(p, P_j)\geq 2$ and so $\overline{P}$ satisfies condition B-2. % Assume $k=\text{cov}_I(M)\neq\text{cov}_{II}(M)$, and we will show $A_{ucg}(K_n, M)\neq k$.
\end{proof}

In section \ref{sec:cov} we determine when $\text{cov}_A(P)=\text{cov}_{AB}(P)$.

\section{When  $C\neq K_n$.}\label{sec:C<>Kn}
For this section assume that $C$ is not a complete graph and so $\text{diam}(C) \geq 2$.  The results in this section mirror those of when $C=K_n$, however conditions on the coverings of $P$, as well as the proofs, are more technical.

\begin{prop}\label{prop:Cbound}
For a given pair of graphs $(C, P)$ , $2\kappa\leq A_{ucg}(C, P)\leq 2\kappa+2$ where $\kappa=\text{cov}_A(P)$.
\end{prop}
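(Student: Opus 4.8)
The statement is the $C \neq K_n$ analogue of Proposition 3.5 (the $C = K_n$ case), and I expect the proof to follow the same two-sided structure, with the factor of $2$ coming from the fact that $\mathrm{diam}(C) \geq 2$ forces the construction to use a radius one larger than in the complete case.

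For the lower bound $2\kappa \leq A_{ucg}(C,P)$, the plan is to invoke Proposition 3.4 (the proposition labeled \texttt{Lboundk}), which gives $|\II(H)| \geq \kappa(r-1)$ for any UCG $H$ with the prescribed center and centered periphery. The key point is that when $C \neq K_n$ we have $\mathrm{diam}(C) \geq 2$, and I would argue that no such $H$ can have radius $r = 2$: if $r(H) = 2$, then every vertex of $C$ lies at distance at most $2$ from every other vertex through the intermediate layer $D_1$ and the center would collapse in a way incompatible with $C$ being an induced non-complete subgraph realized as the exact center. Hence $r(H) \geq 3$, so $r - 1 \geq 2$, and Proposition 3.4 yields $|\II(H)| \geq 2\kappa$. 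Establishing that $r(H) \geq 3$ when $C \neq K_n$ is the step I expect to require the most care; it likely follows from Lemma 3.3 (\texttt{lem:C-D\_1}) together with the fact that two non-adjacent central vertices cannot both be realized at radius $2$ without introducing a shortcut, but I would need to check that the center is genuinely $C$ and not a larger complete graph.

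For the upper bound $A_{ucg}(C,P) \leq 2\kappa + 2$, the plan is to apply the explicit construction $\GG(C, P, \overline{P}, \rho)$ from Proposition 3.7 (\texttt{G0UCG}). By Lemma 3.6 (\texttt{lem:covexist}), since $r(P) \geq 2$ there exists a covering $\overline{P} = \{P_1, \ldots, P_\kappa\}$ of $P$ satisfying condition A of smallest size $\kappa$. Taking $\rho = \min\{\mathrm{diam}(C), 2\} = 2$ (valid since $\mathrm{diam}(C) \geq 2$), Proposition 3.7 guarantees that $G = \GG(C, P, \overline{P}, 2)$ is a UCG with radius $\rho + 1 = 3$, center exactly $C$, and centered periphery exactly $P$. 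I would then count the intermediate vertices: the construction adjoins vertices $x_{i,j}$ for $0 \leq i \leq k$ and $1 \leq j \leq \rho$, which gives $(\kappa + 1)\rho = 2(\kappa + 1) = 2\kappa + 2$ new vertices, all of which lie in $\II(G)$. This immediately furnishes a witness UCG with $|\II(G)| = 2\kappa + 2$, establishing the upper bound.

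The main obstacle is cleanly justifying the lower bound, specifically the claim that $r(H) \geq 3$ for any admissible $H$ when $C$ is not complete. The counting half is a direct substitution into the construction and the radius-bound proposition, but ruling out radius $2$ requires showing that a radius-$2$ UCG with the given data would force its center to be complete — essentially the converse flavor of what makes the $C = K_n$ case attain $r = 2$ in Proposition 3.5. I would complete this by supposing $r(H) = 2$, applying Lemma 3.3 so that every central vertex is adjacent to every vertex of $D_1$, and deriving that any two central vertices share a common neighbor in $D_1$, which would make them at distance at most $2$ in a manner that (combined with eccentricity bookkeeping) collapses $\langle \ZZ(H) \rangle$ to a complete graph, contradicting $C \neq K_n$.
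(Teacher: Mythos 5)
Your proposal has the same two-sided structure as the paper's proof, and your upper-bound half is exactly the paper's argument: take a smallest covering $\overline{P}$ of $P$ satisfying condition A, form $G=\GG(C,P,\overline{P},2)$ (legitimate because $\text{diam}(C)\geq 2$ gives $\min\{\text{diam}(C),2\}=2$), invoke proposition \ref{G0UCG}, and count $|\II(G)|=2(\kappa+1)=2\kappa+2$. That half is complete and correct.

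The gap is in the lower bound, at precisely the step you flag as delicate: ruling out $r(H)=2$. The mechanism you sketch --- every central vertex is adjacent to every vertex of $D_1$ via lemma \ref{lem:C-D_1}, so two central vertices share a common neighbor in $D_1$ and are at distance at most $2$, which then ``collapses'' $\la\ZZ(H)\ra$ to a complete graph --- cannot work. First, lemma \ref{lem:C-D_1} carries the hypothesis $D_1=D_1''$, which is not automatic and is not available here. Second, and fatally, central vertices having mutual distance at most $2$ is compatible with \emph{any} induced center: in the paper's own graph $\GG(C,P,\overline{P},2)$ every pair of central vertices is at distance $2$ through $x_{0,1}$, yet the center induces an arbitrary $C$. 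So no argument that only produces ``distance $\leq 2$'' can force completeness; what must be forbidden is distance \emph{exactly} $2$. That is a pure eccentricity argument needing no information about $D_1$: in a UCG no central vertex lies in $\CP(H)$, for if $c'\in\ZZ(H)\cap\CP(H)$ then $c'\in EC(c')=\CP(H)$ would give $d(c',c')=e(c')=r(H)>0$, absurd; consequently two distinct central vertices $c,c'$ cannot satisfy $d_H(c,c')=r(H)$, since that would put $c'\in EC(c)=\CP(H)$. Hence $d_H(c,c')\leq r(H)-1$ for every central pair, so $r(H)=2$ forces all central vertices to be pairwise adjacent, making $\la\ZZ(H)\ra$ complete and contradicting $C\neq K_n$ (and $r(H)\leq 1$ is ruled out the same way, since it would force the center to be a single vertex). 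With this substituted for your sketch, the lower bound follows from proposition \ref{Lboundk} exactly as you say. For comparison, the paper compresses this whole step into the assertion $r(H)\geq\text{diam}(C)+1\geq 3$; note that what is really used there is only that some central pair is non-adjacent, i.e. the inequality read with $H$-distances, which is exactly the statement proved above.
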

\begin{proof}
For a given pair of graphs $C$ and $P$, suppose $H$ is a UCG with $\la\ZZ(H)\ra=C$ and $\la\CP(H)\ra=P$.  Then $r(H)\geq \text{diam}(C)+1\geq 3$.  From corollary \ref{Lboundk} we obtain a lower bound $A_{ucg}(C, P)\geq 2\kappa$.

For the upper bound consider $G=\GG(C, P, \overline{P}, 2)$ where $\overline{P}$ is a smallest covering of $P$ satisfying condition A.  By proposition \ref{G0UCG} $G$ is a UCG with $|\II(G)|=2\kappa+2$ and so  $A_{ucg}(C, P)\leq 2\kappa+2$.
\end{proof}

In the proof of proposition \ref{prop:Cbound} we use the fact that $\GG(C, P, \overline{P}, 2)$ is a UCG so long as $\overline{P}$ satisfies condition A.  We now consider a modification of this graph, and determine conditions that this new graph is a UCG.

Let $\overline{P}=\{P_1, \ldots, P_k\}$ be a covering of $P$  and let
\[G=\GG(C, P, \overline{P}, 2)-\{x_{0,1}, x_{0,2}\}.\]
\noindent  By construction, $d(c, p)=3$ for all $c\in V(C)$,  $p\in V(P)$, and $d(c, x)\leq 2$ for $x\in V(G)-V(P)$.  So $G$ is a UCG with center $C$ and centred periphery $P$  if and only if for all $u\in V(G)-V(C)$, $e(u)\geq 4$.

Consider the case $u=x_{i,1}$.  If $G$ is a UCG, $\overline{P}$ must satisfy condition A.  This implies $e(x_{i, 1})\geq 4$ for all $1\leq i \leq k$.

Next, consider the case $u=x_{i, 2}$ for some $1\leq i \leq k$. Since $e(u)\geq 4 $ there exists a $v\in V(G)$ such that $d(u, v)\geq 4$.  Because $v\not\in V(C)$ and $d(u, x_{j, 1})\leq 3$, either $v\in V(P)$ or  $v=x_{j, 2}$ for some $j\neq i$.  If $p\in V(P)$ then $v\not\in P_i$ and $d(v, P_i)\geq 3$.  If $v=x_{j, 2}$, then $d(P_i, P_j)\geq 2$.

Finally, consider the case $u\in P_i$ for some $i$.  If $G$ is a UCG, then $e(u)\geq 4$, and so there is a $v\in V(G)$ such that $d(u, v)\geq 4$.  Since $v\not\in V(C)$ and $v\not\in P_i$ because $d(p, p')\leq 2$ for all $p, p'\in P_i$, it follows that either $v=x_{j,1}$, $v=x_{j,2}$ or $v\in P_j$ for some $j\neq i$.  If $v=x_{j, 1}$ then $d(u, P_j)\geq 2$ for $j\neq i$.  If $v=x_{j, 2}$ then $d(u, P_j)\geq 3$.  Finally, if $v\in P_j$ for  some $j\neq i$, then for all $p\in P_j$
\[4\leq d(u, v)\leq d(u, p)+d(p, v)\leq d(u, p)+ 2.\]
We conclude $d(u, P_j)\geq 2$. This analysis gives rise to two new conditions on $\overline{P}$.

Let $\overline{P}=\{P_1, \ldots, P_k\}$ be a covering of a graph $P$. We say $(P, \overline{P})$ satisfies if for all $1\leq i\leq k$
\begin{description}
        \item[Condition A$'$:] if either
            \begin{enumerate}
                \item there is  a $p\not\in P_i$ satisfying $d(P_i, p)\geq 3$, or \label{Ib:1}
                \item there is a $j\neq i$ satisfying $d(P_i, P_j)\geq 2$. \label{Ib:2}
            \end{enumerate}
        \item[Condition B$'$:] if for each $p\in P_i$ there is a $j\neq i$ satisfying $d(p, P_j)\geq 2$. \label{IIb:2}

\end{description}
 Condition A$'$ arises from $e(x_{i,2})\geq 4$ and condition B$'$ from $e(p)\geq 4$, for $p\in V(P)$.  Note condition A$'$ implies condition A, and condition B$'$ implies condition B.  We  often abuse notation and say the covering $\overline{P}$ satisfies a specified condition.

Let $\text{cov}_{A'}(P)$ be the smallest size of the  covering of $P$ satisfying condition A$'$ and $\text{cov}_{A'B'}(P)$  the smallest size of the covering of $P$ satisfying conditions A$'$ and B$'$.

The arguments used  to determine conditions A$'$ and B$'$ are bi-directional.  This implies the following proposition.

\begin{prop}\label{prop:A'B'UCG}
Let $C$ and $P$ be graphs, $\overline{P}$  a covering of $P$, and $G=\GG(C, P, \overline{P}, 2)-\{x_{0,1}, x_{0,2}\}$.  Then $G$ is a UCG with center $C$ and centered periphery $P$ if and only if $\overline{P}$ satisfies conditions A$'$ and B$'$.
\end{prop}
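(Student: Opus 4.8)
The plan is to read the biconditional off an eccentricity analysis of the non-central vertices of $G$, mirroring the case-by-case discussion preceding the statement. First I would record what the construction forces on the candidate center: for every $c\in V(C)$ one has $d(c,p)=3$ for all $p\in V(P)$ (a shortest route leaves $c$ along some $x_{m,1}$, passes $x_{m,2}$, and enters $P_m$) and $d(c,x)\le 2$ for every remaining vertex $x$. Hence $e(c)=3$ and $EC(c)=V(P)$ for each $c\in V(C)$, and since the only edges inside $V(P)$ (resp.\ $V(C)$) are those of $P$ (resp.\ $C$), we get $\la V(P)\ra=P$ and $\la V(C)\ra=C$. As every non-central vertex plainly has eccentricity at least $3$, the claim collapses to one clean assertion: $G$ is a UCG with center $C$ and centered periphery $P$ if and only if $e(u)\ge 4$ for all $u\in V(G)-V(C)$ (equivalently $r(G)=3$ and $\ZZ(G)=V(C)$).

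Next I would stratify the non-central vertices into $\{x_{i,1}\}$, $\{x_{i,2}\}$ and $V(P)$ and dispatch the forward (necessity) direction, which is the easy half and is exactly the analysis given above the statement. Assuming $G$ is the desired UCG, each such $u$ has a witness $v$ with $d(u,v)\ge 4$; routing $v$ through the hub structure and using $d_G\le d_P$ shows that a witness for $x_{i,1}$ forces condition A, one for $x_{i,2}$ forces condition A$'$, and one for a $p\in P_i$ forces condition B$'$. Since A$'$ implies A, necessity of A$'$ and B$'$ follows.

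The real work — and the step I expect to be the main obstacle — is the backward (sufficiency) direction, deducing $e(u)\ge 4$ for all non-central $u$ from A$'$ and B$'$. Here the naive distance formulas break down, because overlapping cover elements create hub shortcuts: if $w\in P_i\cap P_j$ then $x_{i,2}-w-x_{j,2}$ has length $2$, so $d(x_{i,2},q)\le 3$ for every $q\in P_j$, and such a shortcut can pull the witness supplied by A$'$(1) below distance $4$. I would split into cases. When A$'$(2) holds for $i$, the vertex $x_{j,2}$ with $d_P(P_i,P_j)\ge 2$ is a robust witness at distance exactly $4$, since any hub shortcut to it would force $P_i\cap P_j\neq\varnothing$; so $e(x_{i,2})\ge 4$. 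The delicate case is when only A$'$(1) holds and its far vertex lies in a set overlapping $P_i$, which forces me to locate a \emph{different} distance-$4$ witness. This is precisely where I expect A$'$ and B$'$ as literally stated to be insufficient, and where I would want the extra hypothesis that the covering is reduced in the sense of Lemma \ref{lem:subcover} (each $P_i$ carrying a private vertex $\tilde p_i$); without it, redundant, heavily overlapping covers appear to admit some $x_{i,2}$ of eccentricity $3$ while A$'$ and B$'$ still hold. Once the hub shortcuts are controlled — via reducedness together with the two conditions — the bidirectional reading of A$'$ and B$'$ closes the argument.
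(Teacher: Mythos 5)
Your reduction of the statement to ``$e(u)\geq 4$ for every $u\in V(G)-V(C)$'' and your necessity argument coincide exactly with the paper's: the paper's entire proof of this proposition is the case analysis preceding it, plus the single sentence that those arguments are ``bi-directional.'' You are right to distrust that sentence. The necessity analysis extracts conditions phrased in $d_P$, while sufficiency needs lower bounds on $d_G$, and $d_G\leq d_P$ points the wrong way; the hub shortcut you isolate (a vertex $w\in P_i\cap P_j$ forces $d_G(x_{i,2},q)\leq 3$ for every $q\in P_j$) is precisely what can destroy a $d_P$-far witness supplied by A$'$(1), and the paper nowhere addresses it. So your diagnosis names a genuine gap, and it is a gap in the paper's own proof, not merely in yours.

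However, your proposed repair fails, so your proof cannot be completed as outlined: reducedness in the sense of lemma \ref{lem:subcover} does not control the shortcuts. Take $P=C_{20}$ with vertices $\{0,1,\dots,19\}$ and the covering $P_1=\{0,1,10\}$, $P_2=\{0,13,14,15,16,17\}$, $P_3=\{10,3,4,5,6,7\}$, $P_4=\{2\}$, $P_5=\{8,9\}$, $P_6=\{11,12\}$, $P_7=\{18,19\}$ (any $C$). This covering is reduced ($1,13,3,2,8,11,18$ are private vertices), satisfies A$'$ (each element has a vertex at $d_P$-distance at least $3$: vertex $5$ works for $P_1$, $P_2$ and $P_6$, vertex $15$ for $P_3$ and $P_5$, vertex $12$ for $P_4$, vertex $8$ for $P_7$), and satisfies B$'$ (each vertex has an element at $d_P$-distance at least $2$: $P_5$ serves $0,1,18,19$, $P_7$ serves $3,\dots,7$, $P_4$ serves $8,\dots,17$, $P_6$ serves $2$). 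Yet $e_G(x_{1,2})=3$: the vertices at $d_P$-distance $\geq 3$ from $P_1$ are exactly $\{4,5,6,7\}\cup\{13,\dots,17\}$, each lies in $P_3$ or $P_2$, and these meet $P_1$ at $10$ and $0$ respectively, so $x_{1,2}-10-x_{3,2}-p$ and $x_{1,2}-0-x_{2,2}-p$ have length $3$; moreover every other element is within $d_P$-distance $1$ of $P_1$, so every $x_{j,2}$ is also within distance $3$. Hence $x_{1,2}\in\ZZ(G)$ and $G$ is not a UCG with center $C$, even though A$'$ and B$'$ hold and the cover is reduced. The hypothesis that genuinely removes hub shortcuts is pairwise \emph{disjointness} of the cover elements, not reducedness: if the $P_i$ are pairwise disjoint, every $d_P$-far witness remains $d_G$-far and your case analysis closes. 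Note that B$'$ forces disjointness automatically when $k=2$ (a vertex $w\in P_1\cap P_2$ would admit no valid $j$), so the proposition is true for coverings of size two, which is the size occurring in the paper's concluding theorems; but for $k\geq 3$ both the proposition as stated and your amended version are false, and this also infects the paper's later appeals to the sufficiency direction for general $\kappa$, e.g.\ in proposition \ref{prop:Aucg=2k}.
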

We are now ready to relate appendage numbers to coverings.
\begin{prop}\label{prop:Aucg=2k}
	Let $\kappa=\text{cov}_A(P)$. For a graph $P$ with $r(P)>1$,  $\text{cov}_{A'B'}(P)=\kappa$ if and only if $A_{ucg}(C, P)=2\kappa$.
\end{prop}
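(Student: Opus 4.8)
The plan is to prove both implications by comparing against universal bounds that always hold. On one side, $\text{cov}_{A'B'}(P)\ge \text{cov}_A(P)=\kappa$ always (whether finite or not), because condition A$'$ implies condition A, as noted just before Proposition \ref{prop:A'B'UCG}. On the other side, $A_{ucg}(C,P)\ge 2\kappa$ always, by Proposition \ref{prop:Cbound}. So in each direction it suffices to produce the matching upper bound. For the forward implication, suppose $\text{cov}_{A'B'}(P)=\kappa$ and let $\overline{P}=\{P_1,\dots,P_\kappa\}$ realize it. By Proposition \ref{prop:A'B'UCG}, $G=\GG(C,P,\overline{P},2)-\{x_{0,1},x_{0,2}\}$ is a UCG with $\la\ZZ(G)\ra=C$ and $\la\CP(G)\ra=P$; its intermediate vertices are exactly the $x_{i,j}$ with $1\le i\le \kappa$, $1\le j\le 2$, so $|\II(G)|=2\kappa$. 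Hence $A_{ucg}(C,P)\le 2\kappa$, and the lower bound gives equality.

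The backward implication is the substantive part. Starting from a UCG $H$ with $\la\ZZ(H)\ra=C$, $\la\CP(H)\ra=P$ and $|\II(H)|=2\kappa$, I would first pin down the radius: since $C\neq K_n$ forces $\text{diam}(C)\ge 2$ and hence $r(H)\ge 3$ as in the proof of Proposition \ref{prop:Cbound}, while Corollary \ref{cor:rbound} applied with $|\II(H)|=2\kappa+0$ (and $\kappa\ge 2>0$) gives $r(H)\le 3$, we get $r(H)=3$ and $\II(H)=D_1\cup D_2$. Next I would squeeze the two layers. Passing to the induced subcovering $\overline{P}=\{P_1,\dots,P_{k''}\}$ of Lemmas \ref{lem:MCondI} and \ref{lem:subcover}, Corollary \ref{cor:Dm} gives $\kappa\le k''=|D_1''|\le|D_2''|$; combining $|D_1|\ge|D_1''|$ and $|D_2|\ge|D_2''|$ with $|D_1|+|D_2|=2\kappa$ forces every inequality to be an equality, so $k''=\kappa$, $D_1=D_1''$ and $D_2=D_2''$, each of size $\kappa$. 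Now the hypothesis $|D_1''|=|D_{r-1}''|$ of Lemma \ref{lem:y_i-P_i} holds, giving distinct $y_1,\dots,y_\kappa\in D_2$ with $y_i$ adjacent to every vertex of $P_i$; Lemma \ref{lem:C-D_1} gives each $x_i\in D_1$ adjacent to every central vertex; and since each $(x_i,\tilde p_i)$-dmpath has length $r-1=2$, also $x_i$ is adjacent to $y_i$. This reduces $H$ to the same skeleton as $\GG$, except that $H$ may carry extra edges.

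It then remains to verify that this size-$\kappa$ covering $\overline{P}$ satisfies A$'$ and B$'$, which yields $\text{cov}_{A'B'}(P)\le\kappa$ and hence equality. The idea is to convert eccentricity lower bounds into the two conditions, using that every vertex outside $\ZZ(H)=V(C)$ has eccentricity $\ge r+1=4$. Applying this to $y_i$ produces a witness $v$ with $d_H(y_i,v)\ge 4$; the adjacencies above show $v\notin V(C)$ and $v\notin D_1$, so $v$ is either some $y_j$ (forcing $d_P(P_i,P_j)\ge 2$, the second alternative of A$'$) or some $p\in V(P)\setminus P_i$ (forcing $d_P(P_i,p)\ge 3$, the first alternative of A$'$). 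Applying it to a vertex $p\in P_i$ yields a witness at $H$-distance $\ge 4$ that cannot lie in $V(C)$, $\{x_i\}$, $\{y_i\}$, or $P_i$, and in each surviving case (a far $x_j$, a far $y_j$, or a far peripheral vertex of some $P_j$) one extracts a $j\neq i$ with $d_P(p,P_j)\ge 2$, which is exactly condition B$'$.

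The main obstacle is this last translation, and in particular two points absent from the canonical analysis before Proposition \ref{prop:A'B'UCG}. First, because $H$ is an arbitrary UCG rather than a $\GG$-graph, its peripheral distances can be strictly smaller than those of $P$; I would handle this by using only the safe inequality $d_H\le d_P$ on $V(P)$ (edges of $P$ are edges of $H$), so that an assumed small $P$-distance always forces a contradictingly small $H$-distance. Second, when the eccentricity witness for $p\in P_i$ is itself a peripheral vertex $p'\in P_j$, knowing $d_H(p,p')\ge 4$ controls only $p'$, not all of $P_j$; the observation that rescues B$'$ is that every vertex of $P_j$ is adjacent to the single vertex $y_j$, so any two vertices of $P_j$ lie within $H$-distance $2$, whence $d_P(p,P_j)\le 1$ would give $d_H(p,p')\le 3$, a contradiction. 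Checking that the adjacency facts from Lemmas \ref{lem:y_i-P_i} and \ref{lem:C-D_1} genuinely exclude every stray location of the witness $v$ is the part that needs care; everything else is bookkeeping against the bounds already in hand.
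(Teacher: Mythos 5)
Your proof is correct, and its skeleton coincides with the paper's: the forward direction is exactly the paper's (Proposition \ref{prop:A'B'UCG} applied to $\GG(C,P,\overline{P},2)-\{x_{0,1},x_{0,2}\}$, against the lower bound of Proposition \ref{prop:Cbound}), and in the backward direction you run the same squeeze ($r(H)=3$ via Corollary \ref{cor:rbound}, then $\kappa\le k''=|D_1''|\le |D_2''|$ played against $|D_1|+|D_2|=2\kappa$ to force $D_1=D_1''$, $D_2=D_2''$, each of size $\kappa$) and extract the same skeleton from Lemmas \ref{lem:y_i-P_i} and \ref{lem:C-D_1}. Where you genuinely diverge is the finish. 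The paper notes that this skeleton means $\GG(C,P,\overline{P},2)-\{x_{0,1},x_{0,2}\}$ is isomorphic to a spanning subgraph of $H$ preserving distances from central vertices, so Lemma \ref{lem:spanUCG} shows that subgraph is itself a UCG, and then the ``only if'' direction of Proposition \ref{prop:A'B'UCG} immediately returns conditions A$'$ and B$'$. You instead verify A$'$ and B$'$ directly inside $H$, converting the eccentricity bound $e_H(u)\ge 4$ for $u\notin \ZZ(H)$ into the two conditions, and you correctly identify and resolve the one subtlety this creates: $H$ may contain shortcuts among peripheral vertices, so $d_H\le d_P$ is the only safe comparison, and each implication must be run in the contrapositive (a small $d_P$ forces a small $d_H$, contradicting the witness), together with the observation that all of $P_j$ lies within $H$-distance $2$ of any of its vertices via the hub $y_j$. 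What the paper's route buys is economy: the distance bookkeeping is done once, in the model graph, and Lemma \ref{lem:spanUCG} hides the rest; what your route buys is self-containment (no appeal to Lemma \ref{lem:spanUCG}, and Proposition \ref{prop:A'B'UCG} is needed only in the forward direction) and an explicit explanation of why extra edges of $H$ can never invalidate the covering conditions. Both finishes conclude identically: the size-$\kappa$ covering satisfies A$'$ and B$'$, and since A$'$ implies A, $\text{cov}_{A'B'}(P)\ge \text{cov}_A(P)=\kappa$, giving equality.
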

\begin{proof}
	First, assume $\text{cov}_{A'B'}(P)=\kappa$.  Let $\overline{P}=\{P_1, \ldots, P_\kappa\}$ be a covering of $P$ satisfying conditions A$'$ and B$'$.  Then by proposition \ref{prop:A'B'UCG} the graph \[G=\GG(C, P, \overline{P}, 2)-\{x_{0,1}, x_{0,2}\}\]
	\noindent is a UCG and $|\II(G)|=2\kappa$ and so $A_{ucg}(C, P)\leq 2\kappa$.  However, by proposition \ref{prop:Cbound}, $A_{ucg}(C, P)\geq 2\kappa$ and the result follows.
	
	Next, assume $A_{ucg}(C, P)=2\kappa$.  Let $H$ be a UCG with $P=\la\CP(H)\ra$, $C=\la\ZZ(H)\ra$ and $|\II(H)|=2\kappa$. Note that  $r(H)= 3$ by corollary \ref{cor:rbound}.
	
	Let $\{P_1, \ldots , P_{k}\}$ be the induced covering of $P$ as in lemma \ref{lem:MCondI}.   There exists a subcover $\overline{P}=\{P_1, \ldots, P_{k''}\}$ and an associated set of vertices $\{\tilde p_1, \ldots, \tilde p_{k''}\}$ as in lemma \ref{lem:subcover}.   Assume $D_1''=\{x_1, \ldots, x_{k''}\}$, and by corollary \ref{cor:Dm}  $|D_2''|\geq k''$.
	%\{y|y \text{ is on a dmp from} C \text{ to } p_i\text{ for } 1\leq i\leq k''\}\]
	Because $\kappa=\text{cov}_A(P)$ and $\overline{P}$ satisfies condition A we know
	\[2\kappa\leq 2k''\leq  |D_1''|+|D_2''|\leq |\II(H)|= 2\kappa,\]
	and so $k''=|D_1''|=|D_2''|=\kappa$.  This implies $D_1=D_1'', D_2=D_2''$ and $\kappa=k''=k$.
	
	We now show $H$ contains a spanning subgraph isomorphic to  \[G=\GG(C, P, \overline{P}, 2)-\{x_{0,1}, x_{0,2}\}.\]
	\noindent  First, for each $\tilde p_i$ defined above and each central vertex $c$ there is a $(c, \tilde p_i)$-radial path.  By construction of $\tilde p_i$ this path must contain $x_i$, and thus $x_i$ is adjacent to every $c$ in the center by lemma \ref{lem:C-D_1}.
	
	By lemma \ref{prop:uniquepaths} there is an enumeration of $D_2=\{y_1, \ldots, y_k\}$ so that  $y_i$ is adjacent to both $x_i$ and each vertex in $P_i$.   Therefore, $G=\GG(C, P, \overline{P}, 2)-\{x_{0,1}, x_{0,2}\}$ is isomorphic to a spanning subgraph of $H$.  By lemma \ref{lem:spanUCG}, $G$ is a UCG and by proposition \ref{prop:A'B'UCG}, $\overline{P}$ satisfies conditions A$'$ and B$'$.
	
\end{proof}

We now move on to understand when $A_{ucg}(C, P)= 2\kappa+1$.  Let $H$ be a UCG such that $\la\ZZ(H)\ra= C$, $\la\CP(H)\ra=P$ and $|\II(H)|=2\kappa+1$.  By proposition \ref{prop:Aucg=2k}, $\kappa\neq \text{cov}_{A'B'}(P)$.  Also, by corollary \ref{cor:rbound}  $r(H)=3$, and hence $|D_1|+|D_2|=2\kappa+1$.  Let $\{P_1, \ldots, P_{k}\}$ be the induced covering of $P$ through $D_1=\{x_1, \ldots, x_k\}$ as in lemma \ref{lem:MCondI}. Without loss of generality let $\overline{P}=\{P_1, \ldots, P_{k''}\}$, with $k''\leq k$, be a subcover with an associated set of vertices $\{\tilde p_1, \ldots, \tilde p_{k''}\}$ as in lemma \ref{lem:subcover}. Then

\[2\kappa\leq 2k''\leq  |D_1''|+|D_2''|\leq |\II(H)|=2\kappa+1\]
and hence $\kappa=k''$.

Since \[\kappa=|D_1''|\leq |D_1| \text{ and } \kappa=|D_1''|\leq |D_2''|\leq |D_2|,\]  either $|D_1|=\kappa+1$ or $|D_2|=\kappa+1$.

We first address when $|D_1|=\kappa+1$ and $|D_2|=\kappa$.
\begin{prop}\label{prop:2k+1_case1}
	Assume $A_{ucg}(C, P)= 2\kappa+1$, and let $H$ be a UCG such that $\la\ZZ(H)\ra= C$, $\la\CP(H)\ra=P$, and $|\II(H)|=2\kappa+1$ where $\kappa=\text{cov}_A(P)$.  Furthermore, assume $|D_1|=\kappa+1$.  Then $\text{cov}_{A'}(P)=\kappa$.
\end{prop}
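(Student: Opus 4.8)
The plan is to show that the size-$\kappa$ subcover $\overline{P}=\{P_1,\dots,P_\kappa\}$ already in play itself satisfies condition A$'$. Since condition A$'$ implies condition A, any covering satisfying A$'$ satisfies A, so $\text{cov}_{A'}(P)\ge\text{cov}_A(P)=\kappa$ for free; exhibiting one covering of size $\kappa$ satisfying A$'$ then forces equality. From the discussion preceding the statement we already have $r(H)=3$, $|D_1|=\kappa+1$, $|D_2|=\kappa$, and the subcover $\overline P=\{P_1,\dots,P_{k''}\}$ with $k''=\kappa$ together with its associated vertices $\tilde p_1,\dots,\tilde p_\kappa$ from lemma \ref{lem:subcover}.

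First I would pin down the structure of $D_2$. Since $\kappa=k''=|D_1''|$ and, by corollary \ref{cor:Dm}, $|D_1''|\le|D_2''|\le|D_2|=\kappa$, we get $|D_1''|=|D_2''|=\kappa$. As $r-1=2$, lemma \ref{lem:y_i-P_i} then yields distinct vertices $y_1,\dots,y_\kappa\in D_2''$ with $y_i$ adjacent to every vertex of $P_i$; because $|D_2|=\kappa$ this forces $D_2=D_2''=\{y_1,\dots,y_\kappa\}$, and the length-$2$ $(x_i,\tilde p_i)$-dmpath shows $x_i\sim y_i$. Next, by the same radial-path argument as in lemma \ref{lem:C-D_1} (which only uses that $\tilde p_i$ lies in no $P_j$ with $j\ne i$, so every $(c,\tilde p_i)$-radial path must pass through $x_i$), we obtain $x_i\sim c$ for every central $c\in V(C)$ and every $1\le i\le\kappa$. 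Combining, $d(y_i,c)\le d(y_i,x_i)+d(x_i,c)\le 2$ for all central $c$; hence for any $x_j\in D_1$ (choosing a central $c$ adjacent to $x_j$) $d(y_i,x_j)\le 3$, for any $y_j\in D_2$ $d(y_i,y_j)\le 4$, and for any $p\in P$ $d(y_i,p)\le d(y_i,c)+d(c,p)\le 2+3=5$.

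Now I would exploit $e(y_i)\ge 4$, valid because $y_i\in D_2$ is non-central and $H$ has radius $3$. Pick $v$ with $d(y_i,v)\ge 4$. The bounds above exclude $v$ from being central or lying in $D_1$, so either $v=y_j\in D_2$ with $d(y_i,y_j)=4$ for some $j\ne i$, or $v=p\in P$ with $d(y_i,p)\ge 4$ (and then $p\notin P_i$, since all of $P_i$ is adjacent to $y_i$). In the first case, running the triangle inequality through the edges $y_i\sim q$ $(q\in P_i)$ and $y_j\sim q'$ $(q'\in P_j)$ gives $d(q,q')\ge 2$, so $d(P_i,P_j)\ge 2$ with $j\ne i$ and $j\le\kappa$, which is the second clause of condition A$'$. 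In the second case, $d(q,p)\ge d(y_i,p)-1\ge 3$ for every $q\in P_i$, so $d(P_i,p)\ge 3$ with $p\notin P_i$, which is the first clause of condition A$'$. As $i$ was arbitrary, $\overline P$ satisfies condition A$'$, whence $\text{cov}_{A'}(P)\le\kappa$ and equality holds. The delicate point—where essentially all the content sits—is establishing $d(y_i,c)\le 2$ for all central $c$ simultaneously (via $x_i\sim c$ for every $c$); this is exactly what prevents the eccentric witness of $y_i$ from landing in $D_0\cup D_1$ and thereby corners it into $D_2$ or $P$, after which the case split is routine.
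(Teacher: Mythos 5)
Your overall architecture---exhibit the size-$\kappa$ subcover as a covering satisfying condition A$'$, then use $e(y_i)\geq 4$ and exclude eccentric witnesses from $V(C)\cup D_1$ to force the witness into $D_2$ or $V(P)$---matches the second half of the paper's proof, and that part of your case analysis is fine. The gap is exactly at the step you yourself flag as delicate. You justify ``$x_i\sim c$ for every central $c$'' by asserting that every $(c,\tilde p_i)$-radial path must pass through $x_i$, on the grounds that ``$\tilde p_i$ lies in no $P_j$ with $j\neq i$.'' That premise is not available here: lemma \ref{lem:subcover} makes $\tilde p_i$ private only relative to the \emph{other members of the subcover} $\{P_1,\dots,P_\kappa\}$, whereas the full induced cover of lemma \ref{lem:MCondI} has $|D_1|=\kappa+1$ elements. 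The extra vertex $x_k$ (with $k=\kappa+1$) carries its own cover element $P_k=V(P)\cap V(\LL_k)$, which need not be empty and which nothing prevents from containing $\tilde p_i$. Indeed, if $x_k$ is adjacent to $y_i$, then $P_i\subset P_k$: for any $p\in P_i$ we have $p\sim y_i$ by lemma \ref{lem:y_i-P_i}, so $c-x_k-y_i-p$ is a radial path through $x_k$ whenever $c\sim x_k$; consequently \emph{no} choice of private vertex $\tilde p_i$ can avoid $P_k$. In that configuration a central vertex adjacent to $x_k$ need not be adjacent to $x_i$, and your derivation of $d(y_i,c)\leq 2$ collapses. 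Note also that lemma \ref{lem:C-D_1}, which you invoke by analogy, carries the hypothesis $D_1=D_1''$---precisely what fails when $|D_1|=\kappa+1$.

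This is not a minor oversight: it is where the paper spends the bulk of its proof, and it is the only place the hypothesis $A_{ucg}(C,P)=2\kappa+1$ is used---a hypothesis your argument never invokes, which is a warning sign. The paper splits on whether $x_k$ lies on a radial path (i.e.\ $x_k\in D_1'$). If it does, the paper shows, via the index set $I=\{i: x_k\sim y_i\}$ and the identity $P_k=\bigcup_{i\in I}P_i$, that the subcover satisfies both A$'$ and B$'$, whence $A_{ucg}(C,P)=2\kappa$ by proposition \ref{prop:Aucg=2k}, contradicting $A_{ucg}(C,P)=2\kappa+1$. Only after this contradiction can one conclude that $x_k$ lies on no radial path, so that every $(c,\tilde p_i)$-radial path really does pass through $x_i$, and then finish exactly as you do. (As it happens, the inequality $d(y_i,c)\leq 2$ you need does survive in the problematic configuration---for $i\in I$ every central vertex is adjacent to $x_i$ or to $x_k$, and $x_k\sim y_i$---but seeing this requires analyzing $P_k$ and $I$ as above, not the radial-path-uniqueness argument you gave.) As written, your proof is incomplete at its load-bearing step.
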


\begin{proof}
By corollary \ref{cor:Dm} it follows that
\[\kappa=|D_1''|\leq |D_2''|\leq |D_2|= \kappa.\]
Therefore $D_2=D_2''$ and  $|D_1''|=\kappa$. Since $k=|D_1|=\kappa+1$, $x_{k}\in D_1$ but $x_{k}\not\in D_1''$.

Next, we prove $x_k\not\in D_1'$ by showing if $x_k\in D_1'$, then $\overline{P}$ satisfies both  conditions A$'$ and B$'$. Hence $A_{ucg}(C,P)=2\kappa$ by proposition \ref{prop:Aucg=2k}.

Assume $x_k\in D_1'$, that is $D_1=D_1'$.  Because $|D_2|=|D_2''|=\kappa=|D_1''|$ we may assume $D_2=\{y_1, \ldots, y_\kappa\}$ such that each $y_i$ is adjacent to $x_i$ and all vertices of $P_i$ by lemma \ref{lem:y_i-P_i}.  To understand the structure of $P_k$  define the indexing set

\[I=\{i: x_k \text{ is adjacent to } y_i \text{ for some } y_i\in D_2\}.\]

From the definition of $D_1'$ and  $I$,
\[P_k=\bigcup_{i\in I}P_i.\]
For each $j\not \in I$ every radial path to $\tilde p_j$ must contain $x_j$.  Hence, each central vertex $c\in V(C)$ is adjacent to $x_j\in D_1$ for $j\not\in  I$.  A similar argument shows that for each $c\in V(C)$ and each $i\in I$, $c$ is adjacent to either $x_k$ or $x_i$.

We now show $\overline{P}$ satisfies conditions A$'$ and B$'$.

First, assume condition B$'$ fails.  Then there exists an $\iota $ and a vertex $p\in P_\iota$ such that $d_P(p, P_j)\leq 1$ for all $j\neq \iota$. Therefore,  there is a $p_j\in P_j$ such that $d(p, p_j)\leq 1$, and so the following hold:

\begin{enumerate}[i)]
	\item $d(p, x_{j})\leq  d(p, p_j)+d(p_j, x_{j})\leq 3$.
	\item $d(p, y_{j})\leq  d(p, p_j)+d(p_j, y_{j})\leq 2$.
	\item $d(p,p')\leq  d(p, p_j)+d(p_j, p')\leq 3$ for all $p'\in P_j$.
	\item $d(p, x_{\iota})=2$.
	\item $d(p, y_{\iota})=1$.
	\item $d(c, p)=3$  for $c\in V(C)$.
\end{enumerate}
If $\iota \in I$, then $d(p, x_k)=2$. If $\iota\not\in I$,  then for a $j\in I$ \[d(p, x_k)\leq  d(p, p_j)+d(p_j, x_k)\leq 3\]
\noindent since $d(p_j, x_k)=2$. Therefore $e(p)=3$, a contradiction, and so $\overline{P}$ satisifies condition B$'$.

Next, assume condition A$'$ fails.  Then there exists an $\iota$ such that $d_P(P_\iota, p')\leq 2$ for all $p'\in V(P)-P_i$ and $d_P(P_\iota, P_j)\leq 1$ for all $j\neq \iota$.  Then $d(y_{\iota}, p)\leq 3$ for all $p\in V(P)$  and $d(y_{\iota}, y_{j})\leq 3$ for all $j\neq \iota$.  We obtain a contradiction by showing $y_\iota$ is in the center.

If $\iota\not\in I$ then for each $x_j\in D_1$ there is a $c\in V(C)$ that is adjacent to $x_j$. Then $y_\iota-x_\iota-c-x_j$ is a path and $d(y_\iota, x_j)\leq 3$.  Similarly when $\iota\in I$ and  $j\not\in I$, $d(y_\iota, x_j)\leq 3$.  If $\iota, j\in I$ then $y_\iota-x_k-y_j-x_j$ is a path and so $d(y_\iota, x_j)\leq 3$.  Finally,  $d(y_\iota, c)=2$ for all $c\in V(C)$, and so $e(y_{\iota})=3$, a contradiction and $\overline{P}$ satisfies condition A$'$.

Since $\overline{P}$ satisfies conditions A$'$ and B$'$,  $A_{ucg}(C, P)=2\kappa$ by proposition \ref{prop:Aucg=2k}.   This contradicts the assumptions of the proposition and hence $x_k\not\in D_1'$ and so $D_1'=D_1''$.

Then $x_k$ is not on a radial path because $x_k\not\in D_1'$.  Therefore, vertices adjacent to  $x_k$ are in $C$ or $D_1''$.  Furthermore each $y_i\in D_2$ satisfies $e(y_i)\geq 4$ from  $H$ being  a UCG with $r(H)=3$  .  Therefore, there exists a $u\in V(G)$ with $d(y_i, u)\geq 4$. Because $d(c, y_i)= 2$ for all $c\in V(C)$, we know  $u\not\in V(C)$.

Because $D_1'=D_1''$, each $(c, \tilde p_i)$-radial path contains $x_i$, and hence each $c$ is adjacent to $x_i$.  It follows that for a $c\in V(C)$ adjacent to $x_j\in D_1$ that \[d(y_i, x_j)\leq d(y_i, c)+d(c, x_j)=3,\] \noindent  which means $u\not\in D_1$.

If $u \in V(P)$ then $d(u, P_i)\geq 3$ and condition A$'$-\ref{Ib:1} is satisfied for $i$.  If $u=y_j\in D_2''$ then $d_P(P_i, P_j)\geq 2$ and condition A$'$-\ref{Ib:2} is satisfied for $i$.  Since these hold for each $i$, $\overline{P}$ satisfies condition A$'$ and $\text{cov}_{A'}(P)=\kappa$.
\end{proof}

A weak converse of proposition \ref{prop:2k+1_case1} also holds.

\begin{prop}\label{prop:A=A'}
	If $P$ is a graph with $r(P)>1$, $\kappa=\text{cov}_A(P)\neq \text{cov}_{A'B'}(P)$ and $\text{cov}_{A'}(P)=\text{cov}_{A}(P)$, then $A_{ucg}(C, P)=2\kappa+1$.
\end{prop}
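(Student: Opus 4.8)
The plan is to pin down $A_{ucg}(C,P)$ by proving the two inequalities $A_{ucg}(C,P)\geq 2\kappa+1$ and $A_{ucg}(C,P)\leq 2\kappa+1$ separately. For the lower bound, note that condition A$'$ implies A and B$'$ implies B, so every covering satisfying A$'$ and B$'$ satisfies A; hence $\text{cov}_{A'B'}(P)\geq \text{cov}_A(P)=\kappa$, and the hypothesis $\kappa\neq\text{cov}_{A'B'}(P)$ upgrades this to $\text{cov}_{A'B'}(P)>\kappa$. By Proposition \ref{prop:Aucg=2k}, $A_{ucg}(C,P)=2\kappa$ would force $\text{cov}_{A'B'}(P)=\kappa$, which is now impossible; combined with the bound $A_{ucg}(C,P)\geq 2\kappa$ from Proposition \ref{prop:Cbound}, this gives $A_{ucg}(C,P)\geq 2\kappa+1$.

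For the upper bound I would exhibit a UCG with exactly $2\kappa+1$ intermediate vertices. Since $\text{cov}_{A'}(P)=\kappa$, fix a covering $\overline{P}=\{P_1,\ldots,P_\kappa\}$ of $P$ satisfying condition A$'$ and set
\[H=\GG(C,P,\overline{P},2)-\{x_{0,2}\},\]
so that $H$ retains the single vertex $w:=x_{0,1}$, adjacent to every vertex of $V(C)$, but drops $x_{0,2}$. Then $\II(H)=\{w\}\cup\{x_{i,1},x_{i,2}:1\leq i\leq\kappa\}$ has size $2\kappa+1$, and crucially $H-w=\GG(C,P,\overline{P},2)-\{x_{0,1},x_{0,2}\}$, the graph of Proposition \ref{prop:A'B'UCG}.

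To verify $H$ is a UCG with $\la\ZZ(H)\ra=C$ and $\la\CP(H)\ra=P$, I would first record the routine distance facts $d_H(c,c')\leq 2$, $d_H(c,w)=d_H(c,x_{i,1})=1$, $d_H(c,x_{i,2})=2$, and $d_H(c,p)=3$ for all $c\in V(C)$, $p\in V(P)$, so that $e(c)=3$ and $EC(c)=V(P)$ for every central vertex. It then suffices to show every vertex outside $V(C)$ has eccentricity at least $4$. The organizing observation is that $w$ is adjacent only to central vertices and is therefore redundant for shortest paths among the remaining vertices (any detour $c'-w-c''$ can be rerouted as $c'-x_{1,1}-c''$ of equal length), whence $d_H(u,v)=d_{H-w}(u,v)$ for all $u,v\neq w$. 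Consequently the eccentricity estimates from the analysis preceding Proposition \ref{prop:A'B'UCG} transfer intact: condition A$'$ (which implies A) already yields $e(x_{i,1})\geq 4$ and $e(x_{i,2})\geq 4$ for $1\leq i\leq\kappa$, the relevant witnesses lying in $V(P)$ or among the $x_{j,2}$ and hence unaffected by $w$. For each peripheral vertex $p$, since $w$'s only neighbors are central, every $w$–$p$ path begins $w-c$, so $d_H(w,p)=1+d_H(c,p)=4$ and thus $e(p)\geq 4$; and $e(w)\geq d_H(w,p)=4$. Hence the minimum eccentricity is $3$, attained exactly on $V(C)$, giving $r(H)=3$, $\ZZ(H)=V(C)$ and $EC(c)=V(P)=\CP(H)$, so $H$ is a UCG witnessing $A_{ucg}(C,P)\leq 2\kappa+1$.

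I expect the main obstacle to be precisely the peripheral eccentricity step. Because B$'$ may fail, the basic graph $H-w=\GG(C,P,\overline{P},2)-\{x_{0,1},x_{0,2}\}$ need not be a UCG — some $p\in V(P)$ can have eccentricity $3$ and thus slip into the center. The whole weight of the argument rests on showing that the single appendage $w$ simultaneously rescues \emph{all} peripheral vertices (placing each at distance $4$ from $w$) while, by its redundancy, shortening none of the A$'$-guaranteed distances that keep the $x_{i,j}$ out of the center. Making the redundancy claim airtight — so that re-adding $w$ cannot create a shortcut collapsing any of these distances below $4$ — is the step deserving the most care.
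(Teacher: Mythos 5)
Your proof is correct and takes essentially the same route as the paper: the same lower bound argument via Propositions \ref{prop:Cbound} and \ref{prop:Aucg=2k}, and the same witness graph $\GG(C,P,\overline{P},2)-\{x_{0,2}\}$ built from a smallest covering satisfying condition A$'$, with $x_{0,1}$ providing the distance-$4$ witness for every peripheral vertex. The only difference is that you make explicit the redundancy of $x_{0,1}$ for shortest paths among the other vertices, a point the paper asserts without detail (``This still holds in $G$''), so your write-up is a slightly more careful version of the same argument.
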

\begin{proof}
	Assume $cov_{A'B'}(P)\neq \kappa$ but $cov_{A'}(P)=\kappa$. Let $\overline{P}=\{P_1, \ldots P_\kappa\}$ be a smallest covering with respect to condition A$'$ and  \[G=\GG(C, P, \overline{P},2)-\{x_{0,2}\}.\]\noindent  We claim $G$ is a UCG with $C=\la\ZZ(G)\ra$, $P=\la\CP(G)\ra$ and $|\II(G)|=2\kappa+1$.
	
	By proposition \ref{prop:A'B'UCG} the graph $G'=G-\{x_{0,1}\}$ is a UCG if and only if $\overline{P}$ satisfies conditions A$'$ and B$'$.    In $G'$, if  $\overline{P}$ satisfies conditions A$'$, then  $e(x_{i,1})\geq 4$ and $e(x_{i,2})\geq 4$.  This is still holds in $G$.  Furthermore, for each vertex $p\in V(P)$, $d(x_{0,1}, p)=4$ in $G$, and so $G$ is a UCG. Therefore, $A_{ucg}(C, P)\leq 2\kappa+1$, but by proposition \ref{prop:Aucg=2k} $A_{ucg}(C, P)> 2\kappa$.
\end{proof}

The case when $|D_2|=\kappa+1$ is more complicated.  To understand this case we consider a new graph $G$, and determine new conditions for when $G$ is a UCG.

For two graphs $C$ and $P$, let $\overline{P}=\{P_1, \ldots, P_k\}$ be a covering of $P$, and $\overline{Q}=\{Q_0, Q_1, P_2, \ldots, P_k\}$ be a covering such that $Q_0\cup Q_1=P_1$.

Define a graph $G=\GG'(C, P, \overline{Q})$ as follows (see figure \ref{fig:G'}):
\[V(G)=V(C) \cup V(P) \cup \{x_{i}, y_j: \, 1\leq i\leq k, 0\leq j\leq k\}\]
and  $ab$ is an edge in $G$ if and only if one of the following occurs
\begin{enumerate}[i)]
	\item $ab$ is an edge of $C$.
	\item $ab$ is an edge of $P$.
	\item $a  $ is a vertex of $C$ and $b=x_{i}$ for some $i$ with $1\leq i \leq k$.
	\item $a\in P_j$ and $b=y_{j}$ for some $j$ with $2\leq j\leq k$.
	\item $a\in Q_l$ and $b=y_{l}$ for some $l=0, 1$.
	\item $a=x_i$ and $b=y_i$ for $1\leq i\leq k$
	\item $a=x_1$ and $b=y_0$.
\end{enumerate}

\begin{figure}[!htbp]
	\[\xy
	(0,10)*\ellipse(20,4){-};
	%(-20,20)*{\bullet};(-10,20)*{\bullet};(20,20)*{\bullet};(0,20)*{\cdots};
	(0,20)*{C};
	%D_1
	(-15,35)*{\bullet};(-5,35)*{\bullet};(8,35)*{\cdots};(15,35)*{\bullet};
	(-19,35)*{x_{1}};(20,35)*{x_{k}};(0,35)*{x_2};
	%D_2
	(-15,45)*{\bullet};(-5,45)*{\bullet};(8,45)*{\cdots};(15,45)*{\bullet};(-25,45)*{\bullet};
	(-15,35)*{};(-15,45)*{}**\dir{-};(-5,35)*{};(-5,45)*{}**\dir{-};(15,35)*{};(15,45)*{}**\dir{-};(-15,35)*{};(-25,45)*{}**\dir{-};
	(-19,45)*{y_{1}};(0,45)*{y_{2}};(20,45)*{y_{k}};(-29,45)*{y_{0}};
	%Connecting to D_1
	%(-20,20)*{};(-15,35)*{}**\dir{-};(-20,20)*{};(-5,35)*{}**\dir{-};(-20,20)*{};(15,35)*{}**\dir{-};(-20,20)*{};(-35,35)*{}**\dir{-};
	(-12,25)*{};(-15,35)*{}**\dir{-};(-6,25)*{};(-15,35)*{}**\dir{-};(0,25)*{};(-15,35)*{}**\dir{-};
	(-3,25)*{};(-5,35)*{}**\dir{-};(3,25)*{};(-5,35)*{}**\dir{-};(9,25)*{};(-5,35)*{}**\dir{-};
	(6,25)*{};(15,35)*{}**\dir{-};(10,25)*{};(15,35)*{}**\dir{-};(14,25)*{};(15,35)*{}**\dir{-};
	(-20,25)*{};
	%P
	%(0,28)*\ellipse(30,6){-};
	(0, 66)*{P};
	(-26,57)*{Q_0};
	(-25,45)*{};(-28,55)*{}**\dir{-};(-25,55)*{};(-25,45)*{}**\dir{-};(-25,45)*{};(-22,55)*{}**\dir{-};
	(-15,57)*{Q_1};
	(-15,45)*{};(-18,55)*{}**\dir{-};(-15,55)*{};(-15,45)*{}**\dir{-};(-15,45)*{};(-12,55)*{}**\dir{-};
	(-4,57)*{P_2};
	(-5,45)*{};(0,55)*{}**\dir{-};(-3,55)*{};(-5,45)*{}**\dir{-};(-5,45)*{};(-6,55)*{}**\dir{-};
	(24,57)*{P_k};
	(15,45)*{};(28,55)*{}**\dir{-};(25,55)*{};(15,45)*{}**\dir{-};(15,45)*{};(22,55)*{}**\dir{-};
	(10,55)*{\cdots};
	(-13,28.3)*\ellipse(6.5,3){-};(12,28.3)*\ellipse(6.5,3){-};(-2,28.3)*\ellipse(6.5,3){-};(-7.5,28.3)*\ellipse(6.5,3){-};
	(-34,62)*{};(27,62)*{};**\frm{^)};
	\endxy\]
	\caption{$G=\GG'(C, P, \overline{Q})$}\label{fig:G'}
\end{figure}

We now determine conditions on $\overline{Q}$ for $G$ to be a UCG with $C=\la\ZZ(G)\ra$ and $P=\la\CP(G)\ra$.  By construction for all $c\in V(C)$ and $p\in V(P)$, $d(c, p)=3$, and $d(c, x)\leq 2$ for all $x\in V(G)-V(P)$.  So $G$ is a UCG with center $C$ and centred periphery $P$  if and only if for all $u\in V(G)-V(C)$, $e(u)\geq 4$.  Then there exists a vertex $v$ with $d(u,v)\geq 4$.

First consider the case $u=x_i$.  In every UCG the induced covering satisfies condition A.  This implies $v\in V(P)$ and  $e(x_{i})\geq 4$ for all $1\leq i \leq k$.  Note that, in $G$ the induced covering is $\overline{P}$, not $\overline{Q}$.

Next, consider the case $u=y_{i}$ for some $2\leq i \leq k$.  Since $v\not\in V(C)$ and $d(v, x_{j})\leq 3$ for $1\leq j\leq k$,  either $v\in V(P)$ or $v=y_j$ for $0\leq j\leq k$ and $j\neq i$.  The one of the following holds.
\begin{enumerate}[i)]
	\item  If $v\in V(P)$ then $v\not\in P_i$ and $d(P_i, v)\geq 3$.
	\item   If $v=y_{j}$, $2\leq j\leq k$ and $j\neq i$, then $d(P_i, P_j)\geq 2$.
    \item   If $v=y_{l}$ for $l=0$ or $1$, then $d(P_i, Q_l)\geq 2$.
\end{enumerate}
Now, consider the case $u=y_0$ or $y_1$.   Without loss of generality we assume $u=y_0$.  Since $v\not\in V(C)$, $v\neq x_j$ for $1\leq j\leq k$, $v\neq y_1$ and $v\not\in Q_0\cup Q_1=P_1$, the following must hold.
\begin{enumerate}[i)]
	\item  If $v\in V(P)$ then $v\not\in P_1$ and $d(Q_0, v)\geq 3$.
	\item   If $v=y_{j}$, $2\leq j\leq k$ and $j\neq i$, then $d(Q_0, P_j)\geq 2$.
\end{enumerate}

Now, consider the case $u=p\in P_i$ for some $2\leq i\leq k$.  Once again, if $G$ is a UCG, then $e(p)\geq 4$, and so there exists a $v\in V(G)$ such that $d(u, v)\geq 4$.  We know $v\not\in V(C)$.  Also, $v\not\in P_i$ since $d(p, p')\leq 2$ for all $p'\in P_i$.  Also note, that for $j\geq 2$ and $j\neq i$, if $d(u, y_j)\geq 4$ then $d(u, x_j)\geq 4$.  This implies that we do not need to determine the conditions for $d(u, y_j)\geq 4$.  Given this, one of the following must hold.

\begin{enumerate}[i)]
	\item If $v=x_{j}$ for $j\neq i$ and $2\leq j\leq k$,  then $d(u, P_j)\geq 2$.
	\item If $v=x_1$, then $d(u, Q_0)\geq 2$ and $d(u, Q_1)\geq 2$.
	\item If $v=y_l$ for $l=0$ or $1$, then $d(u, Q_l)\geq 3$.
	\item If $v=p'\in P_j-P_i$ for  $2\leq j\leq k$ and  $j\neq i$, then $d(u, p')\geq 4$ and $d(u, P_j)\geq 2$.
	\item If $v=p'\in Q_l-P_i$ for $l=0$ or $1$, then $d(p, Q_l)\geq 2$ and $d(u, p')\geq 4$.
\end{enumerate}

 The last case to consider, without loss of generality, is  $u=p\in Q_0$.  Since $d(u, v)\leq  3$ if $v\in V(C)$, $v=x_1, y_0$ or $y_1$ or $v\in Q_0$,  the following must hold.
\begin{enumerate}[i)]
	\item If $v=x_{j}$,  then $2\leq j\leq k$ and $d(u, P_j)\geq 2$.
	\item If  $v=y_{j}$ for $2\leq j\leq k$,  then $d(u, P_j)\geq 3$.
	\item If $v=p'\in P_j-Q_0$ for $2\leq j\leq k$, then $d(u, P_j)\geq 2$ and $d(u, p')\geq 4$.
	\item If $v=p'\in Q_1-Q_0$,  then  $d(u, p')\geq 4$ and $d(u, Q_1)\geq 2$.
\end{enumerate}

The above discussion is in terms of $\overline{Q}$, however the rest of the paper is in terms of $\overline{P}$.  For this reason now summarize the discussion in terms of two technical conditions on $\overline{P}$.

Let $\overline{P}=\{P_1, \ldots, P_k\}$ be a covering of a graph $P$. For a given  $\iota$ and two sets $Q_0$ and $Q_1$ such that $Q_0\cup Q_1 =P_\iota$,  let \[\overline{Q}=\overline{P} \cup \{Q_0, Q_1\}-\{P_\iota\}.\] We say $(P, \overline{P}, \overline{Q})$ satisfies
\begin{description}
	\item[Condition A$''$:] \quad%if $\overline{P}'$ satisfies condition A$'$.
		\begin{enumerate}
			\item if for each $i\neq \iota$ one of the following holds \label{A'':1}
			\begin{enumerate}
				\item  there is a $p\not\in P_i$ satisfying $d(P_i, p)\geq 3$, or \label{A'':1a}
				\item there is a $j\neq \iota$ satisfying $d(P_i, P_j)\geq 2$, or \label{A'':1b}
				\item there is an $l= 0$ or $1$ such that $d(P_i, Q_l)\geq 2$, \label{A'':1c}
			\end{enumerate}	
			\item and if for all $l= 0, 1$ either
			\begin{enumerate}\label{A'':2}
				\item  there is a $p\not\in P_\iota$ satisfying $d(Q_l, p)\geq 3$, or \label{A'':2a}
				\item there is a $j\neq \iota$ satisfying $d(Q_l, P_j)\geq 2$. \label{A'':2b}
			\end{enumerate}
		\end{enumerate}
	\item[Condition B$''$:] \quad
	\begin{enumerate}
		\item if for each $p\in P_i$, $i\neq \iota$ one of the following holds \label{B'':1}
		\begin{enumerate}
			\item  there is a $j\neq \iota$ satisfying $d(p, P_j)\geq 2$, or \label{B'':1a}
			\item $d(p, Q_0)\geq 2$ and $d(p, Q_1)\geq 2$, or \label{B'':1b}
			\item there is an $l=0$ or $1$ such that $d(p, Q_l)\geq 3$, or \label{B'':1c}			
			 \item there  $l=0$ or $1$ such that $d(p, Q_l)\geq 2$ and a $p'\in Q_l$ so that $d(p, p')\geq 4$. \label{B'':1d}
		\end{enumerate}
		\item and if for each $l=0, 1$ and each $p\in Q_l$, either \label{B'':2}
		\begin{enumerate}
			\item  there is a $j\neq \iota$ satisfying $d(p, P_j)\geq 2$, or \label{B'':2a}
			\item  there exists $p'\in P_\iota - Q_l$ such that $d(p, p')\geq 4$ and $d(p, Q_{l'})\geq 2$, where $l'=0$ or $1$ but $l'\neq l$. \label{B'':2b}
		\end{enumerate}
	\end{enumerate}
\end{description}

When there exists an $\iota$, $Q_0$ and $Q_1$ such that $(P, \overline{P}, \overline{Q})$ satisfies condition A$''$ (\textit{resp.} condition B$''$), we say $(P, \overline{P})$ or simply $\overline{P}$ satisfies condition A$''$ (condition B$''$).  Without loss of generality we may renumber $\overline{P}$ so that $\iota=1$.   Note that A$'$ and B$'$ imply A$''$ and B$''$ by taking $\iota=1$  and letting $Q_0=P_1$ and $Q_1=\emptyset$. However, condition A$''$ does not imply condition A.  So let $\text{cov}_{AA''B''}(P)$ be the smallest size of the covering $\overline{P}$ of $P$ satisfying conditions A, A$''$ and B$''$.

Similar to the discussion of conditions A$'$ and B$'$, the arguments used to determine conditions A$''$ and B$''$ from the graph $\GG(C, P, \overline{Q})$ are  bi-directional.  We summarize the discussion in the following proposition.

\begin{prop}\label{prop:A''B''UCG}
	For graphs $C$ and $P$, and a triple $(P, \overline{P}, \overline{Q})$,  $G=\GG'(C, P, \overline{Q})$ is a UCG if and only if $\overline{P}$ satisfies condition A and $(P,\overline{P}, \overline{Q})$ satisfies conditions A$''$ and B$''$.
\end{prop}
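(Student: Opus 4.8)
The plan is to follow the template already used for Proposition~\ref{prop:A'B'UCG}: first reduce the assertion ``$G=\GG'(C,P,\overline{Q})$ is a UCG with center $C$ and centered periphery $P$'' to a single eccentricity condition, and then show that this condition decomposes, vertex-type by vertex-type, into exactly conditions A, A$''$ and B$''$. First I would record the reduction established in the discussion immediately preceding the statement: by construction of $\GG'$, every $c\in V(C)$ satisfies $d(c,p)=3$ for $p\in V(P)$, $d(c,x)\leq 2$ for $x\in V(G)-V(P)$, and $d(c,c')\leq 2$, so $e(c)=3$ and $EC(c)=V(P)$. Consequently $G$ is the desired UCG precisely when every non-center vertex has eccentricity strictly larger than $3$; that is, iff $e(u)\geq 4$ for all $u\in V(G)-V(C)$ (which then forces $\ZZ(G)=V(C)$, $r(G)=3$, and $\langle\CP(G)\rangle=P$).

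The core of the proof is to establish, for each kind of non-center vertex, that $e(u)\geq 4$ is equivalent to one clause of the three conditions. I would partition $V(G)-V(C)$ into the types $x_i$ $(1\leq i\leq k)$, $y_j$ $(2\leq j\leq k)$, $y_0$ and $y_1$, $p\in P_i$ $(2\leq i\leq k)$, and $p\in Q_0\cup Q_1=P_1$, exactly as in the displayed discussion. For each type, $e(u)\geq 4$ asserts the existence of some $v$ with $d(u,v)\geq 4$; using the stratified structure of $G$ one first discards every candidate $v$ forced to lie within distance $3$ of $u$ (the center, the $x_j$ reachable through the center, vertices of the same block, and so on), leaving a short list of surviving witnesses. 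Translating ``$d(u,v)\geq 4$'' for each surviving candidate into a distance inequality in $P$ reproduces precisely the disjunctions appearing in the conditions: the $x_i$ vertices yield condition A (here one uses that the induced covering of $G$ is $\overline{P}$, with $x_1$ accounting for $P_1=Q_0\cup Q_1$); the vertices $y_0,y_1,y_j$ yield the two parts of condition A$''$; and the peripheral vertices yield the two parts of condition B$''$, with the observation that $d(u,y_j)\geq 4$ implies $d(u,x_j)\geq 4$ allowing the $y_j$ witnesses to be dropped in the $p\in P_i$ analysis. Since each translation is an equivalence, the conjunction over all non-center vertices shows $G$ is a UCG iff A, A$''$ and B$''$ all hold.

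The main obstacle is the combinatorial bookkeeping rather than any single hard idea: one must check that the candidate-witness lists for each vertex type are genuinely exhaustive, so that failure of the extracted condition forces a far witness and hence $e(u)\geq 4$ (this is what secures the reverse direction), and that the symmetry reductions are legitimate---in particular the renumbering so that $\iota=1$, the interchangeability of $y_0$ and $y_1$, and the consistent handling of the refinement $Q_0\cup Q_1=P_1$ between the covering $\overline{Q}$ defining $G$ and the covering $\overline{P}$ in whose terms the conditions are phrased. A secondary point worth flagging explicitly is that condition A must be carried as a separate hypothesis alongside A$''$ and B$''$: it arises from the $x_i$ vertices and, as already noted in the text, is \emph{not} implied by A$''$.
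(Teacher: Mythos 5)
Your proposal is correct and follows essentially the same route as the paper: the paper's proof of this proposition \emph{is} the discussion immediately preceding it, which reduces the UCG property to $e(u)\geq 4$ for all $u\in V(G)-V(C)$ and then translates this, vertex-type by vertex-type ($x_i$; $y_j$ for $2\leq j\leq k$; $y_0,y_1$; $p\in P_i$; $p\in Q_0\cup Q_1$), into conditions A, A$''$ and B$''$ via bi-directional arguments. Your flagged subtleties --- that the induced covering of $G$ is $\overline{P}$ rather than $\overline{Q}$, that $d(u,y_j)\geq 4$ implies $d(u,x_j)\geq 4$ so the $y_j$ witnesses can be dropped, and that condition A must be carried separately since A$''$ does not imply it --- are exactly the points the paper makes.
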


We are now ready to prove analogues to propositions  \ref{prop:2k+1_case1} and \ref{prop:A=A'}.

\begin{prop}\label{prop:2k+1_case2}
	Assume $A_{ucg}(C, P)= 2\kappa+1$, and let $H$ be a UCG with $\la\ZZ(H)\ra= C$, $\la\CP(H)\ra=P$, and $|\II(H)|=2\kappa+1$ where $\kappa=\text{cov}_A(P)$.  Furthermore, assume $|D_2|=\kappa+1$.  Then $\text{cov}_{AA''B''}(P)=\kappa$.
\end{prop}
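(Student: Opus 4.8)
The plan is to mirror the structure of the proof of Proposition \ref{prop:2k+1_case1}, but now the extra intermediate vertex lives in $D_2$ rather than $D_1$, so the role of the exceptional vertex is played by some $y$-vertex that is shared by the radial paths of two distinct covering elements. First I would set up the induced covering $\{P_1,\ldots,P_k\}$ through $D_1=\{x_1,\ldots,x_k\}$ together with a subcover $\overline{P}=\{P_1,\ldots,P_{k''}\}$ and associated vertices $\{\tilde p_1,\ldots,\tilde p_{k''}\}$ as in Lemmas \ref{lem:MCondI} and \ref{lem:subcover}. As in the surrounding discussion, the chain $2\kappa\le 2k''\le |D_1''|+|D_2''|\le |\II(H)|=2\kappa+1$ forces $\kappa=k''$, and combined with the assumption $|D_2|=\kappa+1$ and $\kappa=|D_1''|\le|D_1|$, I would deduce $|D_1|=\kappa$, hence $D_1=D_1'=D_1''$ and $k=\kappa$. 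Thus every $x_i$ lies on a radial path, and by Lemma \ref{lem:C-D_1} each central vertex is adjacent to each $x_i$.

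Next I would analyze $D_2$. Since $|D_2|=\kappa+1$ while only $\kappa$ of its vertices are forced by the disjoint dmpaths of Proposition \ref{prop:uniquepaths}, there is exactly one ``extra'' vertex $y_0\in D_2\setminus D_2''$. By Lemma \ref{lem:y_i-P_i} — applicable because $|D_1''|=|D_{r-1}''|=\kappa$ with $r=3$ — I would enumerate $D_2''=\{y_1,\ldots,y_\kappa\}$ so that $y_i$ is adjacent to $x_i$ and to every vertex of $P_i$. The extra vertex $y_0$ must be adjacent to some $x_1$ (after renumbering) and to some subset of $P_1$; I would \emph{define} $Q_0$ to be the set of peripheral vertices reached through $y_0$ and $Q_1=P_1$ otherwise, so that $Q_0\cup Q_1=P_1=P_\iota$ with $\iota=1$. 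This is the natural splitting that realizes the covering $\overline{Q}=\overline{P}\cup\{Q_0,Q_1\}-\{P_1\}$ of the graph $\GG'(C,P,\overline{Q})$ from Figure \ref{fig:G'}.

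The heart of the argument is to show that the eccentricity conditions $e(u)\ge 4$ for every non-central $u$ of $H$, which hold because $H$ is a UCG with $r(H)=3$, translate into exactly conditions A, A$''$ and B$''$ on $(P,\overline{P},\overline{Q})$. I would run through the vertex types exactly as in the bidirectional case analysis preceding Proposition \ref{prop:A''B''UCG}: for each $u$ (an $x_i$, a $y_i$ with $i\ge 2$, the split vertices $y_0,y_1$, and peripheral $p$ lying in $P_i$, $Q_0$, or $Q_1$) there is a witness $v$ with $d_H(u,v)\ge 4$, and since $v$ cannot be central nor too close along the known paths, $v$ must be of a restricted type forcing one of the enumerated subclauses of A$''$ or B$''$. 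The condition A on $\overline{P}$ itself comes for free since it is the induced covering of a UCG by Lemma \ref{lem:MCondI}. Concluding, the existence of a covering satisfying A, A$''$, B$''$ of size $\kappa$ gives $\text{cov}_{AA''B''}(P)\le\kappa$; the reverse inequality $\text{cov}_{AA''B''}(P)\ge\kappa$ follows since any such covering has size at least $\text{cov}_A(P)=\kappa$ (as A$''$, B$''$ together with A refine condition A).

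The step I expect to be the main obstacle is pinning down that $\GG'(C,P,\overline{Q})$ is genuinely isomorphic to a spanning subgraph of $H$ that preserves all distances from the center, so that Lemma \ref{lem:spanUCG} and Proposition \ref{prop:A''B''UCG} apply cleanly. The subtlety is the vertex $y_0$: it sits in $D_2$ but is \emph{not} on any radial path to a distinguished $\tilde p_i$, so I must argue carefully that its only neighbors are in $C\cup D_1\cup P$ and specifically that it attaches to $x_1$ and to $Q_0\subset P_1$ exactly as drawn, without creating unexpected shortcuts that would drop some eccentricity below $4$. Matching $H$'s adjacencies to the rigid edge set of $\GG'$ — particularly verifying that no central vertex is adjacent to $y_0$ and that the split $Q_0,Q_1$ is consistent across all the B$''$ subclauses — is where the technical bookkeeping concentrates.
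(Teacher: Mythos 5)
Your outline follows the same route as the paper — isolate the extra vertex $y_0\in D_2$, split $P_1$ into $Q_0\cup Q_1$, realize $\GG'(C,P,\overline{Q})$ as a spanning subgraph of $H$, then finish with lemma \ref{lem:spanUCG}, proposition \ref{prop:A''B''UCG}, and the two-sided bound $\kappa\le\text{cov}_{AA''B''}(P)\le\kappa$ — but there is a genuine gap at exactly the step you flag as the main obstacle, and the tool you propose for it does not apply. You invoke lemma \ref{lem:y_i-P_i} to conclude that each $y_i$ is adjacent to \emph{every} vertex of the induced cover element $P_i$. That lemma requires $|D_1''|=|D_{r-1}''|$, i.e.\ $|D_2''|=\kappa$, whereas here you only know $\kappa\le|D_2''|\le|D_2|=\kappa+1$; and $|D_2''|=\kappa+1$ occurs exactly when $y_0$ lies on a radial path, i.e.\ precisely in the case $Q_0\neq\emptyset$ that your construction is meant to handle. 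Worse, in that case the lemma's conclusion is false in general: a vertex $p\in P_i$ is only guaranteed to lie on \emph{some} radial path through $x_i$, and that path may pass through $y_0$ (or through another $y_j$ adjacent to $x_i$) rather than through $y_i$, so $y_i$ need not be adjacent to all of $P_i$. Consequently your covering $\overline{Q}=\overline{P}\cup\{Q_0,Q_1\}-\{P_1\}$, whose elements for $i\ge2$ are the induced sets $P_i$ (and with $Q_1$ essentially $P_1$), forces $\GG'(C,P,\overline{Q})$ to contain edges $y_ip$ with $p\in P_i$ that need not exist in $H$; it is then not a spanning subgraph of $H$, and lemma \ref{lem:spanUCG} cannot be applied.

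The paper repairs exactly this point by abandoning the induced cover at this stage: it sets $Q_i=\{p\in V(P):\, p \text{ is adjacent to } y_i\}$ for \emph{all} $0\le i\le\kappa$ and works with $\overline{Q}=\{Q_0,\ldots,Q_\kappa\}$ and $\overline{P}'=\{Q_0\cup Q_1,Q_2,\ldots,Q_\kappa\}$. This is still a covering of $P$ (every $p\in\CP(H)$ is at distance $3$ from the center, hence has a neighbor in $D_2$), and now every edge demanded by $\GG'(C,P,\overline{Q})$ — the edges of $C$ and of $P$, the edges $cx_i$ from lemma \ref{lem:C-D_1}, the edges $x_iy_i$ and $x_1y_0$ coming from the disjoint dmpaths of proposition \ref{prop:uniquepaths} and the choice of $x_1$, and the edges between $y_i$ and $Q_i$ by the very definition of $Q_i$ — is an edge of $H$. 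The spanning-subgraph step is then immediate, and the rest of your argument (lemma \ref{lem:spanUCG}, proposition \ref{prop:A''B''UCG}, and the inequality $\kappa\le\text{cov}_{AA''B''}(P)\le|\overline{P}'|=\kappa$) goes through. A smaller remark: your closing worry about ``unexpected shortcuts'' and about controlling all neighbors of $y_0$ is not actually an issue — a spanning subgraph only needs its edge set contained in $E(H)$, and lemma \ref{lem:spanUCG} only needs distances from central vertices to be preserved, which holds because $d_H(c,\cdot)$ equals $1$, $2$, $3$ on $D_1$, $D_2$, $\CP(H)$ respectively and $\GG'$ realizes those same distances.
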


\begin{proof}
Let $H$ be a UCG with $C=\la\ZZ(H)\ra$, $P=\la\CP(H)\ra$, $|\II(H)|=2\kappa+1$ and $|D_2|=\kappa+1$.  Since $|D_1|=\kappa$ and $\kappa\leq |D_1''|\leq |D_1|$, it follows that $|D_1|=|D_1''|$.    We prove this proposition by studying the structure of a spanning subgraph.

Let $D_1=\{x_1, \ldots, x_\kappa\}$ and $\{\tilde p_1, \ldots, \tilde p_\kappa\}$ be a set of vertices associated to the induced cover $\overline{P}$.  By proposition \ref{prop:uniquepaths} there exists an enumeration $\{y_0, \ldots, y_{\kappa}\}$ of $D_2$ such that the vertex $y_j$ is adjacent to $x_j$ and $\tilde p_j$ for each $j$, $1\leq j\leq \kappa$.  We may also assume $y_0$ is adjacent to $x_1$.

We now define a different cover of $P$. For each $i$, $0 \leq i\leq \kappa$,   let
\[Q_i=\{p\in V(P): p \text{ is adjacent to } y_i\}.\]
Let $P_1'=Q_0\cup Q_1$ and $P_i'=Q_i$ for $2\leq i \leq \kappa$, $\overline{P}'=\{P_1', \ldots, P_\kappa'\}$ and $\overline{Q}=\{Q_0, \ldots, Q_\kappa\}$.
Since $|D_1|=|D_1''|$  every vertex of $C$ is adjacent to vertex in $D_1$ by lemma \ref{lem:C-D_1}.  Therefore, $G=\GG'(C, P, \overline{Q})$ is isomorphic to a spanning subgraph of $H$, and is a UCG by lemma \ref{lem:spanUCG}.  By proposition \ref{prop:A''B''UCG} $\overline{P}'$ satisfies conditions A, A$''$ and B$''$ which means
\[\kappa\leq \text{cov}_{AA''B''}(P)\leq |\overline{P}'|= \kappa.\]
We conclude $\text{cov}_{AA''B''}(P)=\kappa$.

\end{proof}

\begin{prop}\label{prop:A=A''B''}
	If $P$ is a graph with $r(P)>1$, $\kappa=\text{cov}_A(P)\neq \text{cov}_{A'B'}(P)$ and $\text{cov}_{AA''B''}(P)=\text{cov}_{A}(P)$, then $A_{ucg}(C, P)=2\kappa+1$.
\end{prop}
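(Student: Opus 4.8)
The plan is to follow the template of proposition \ref{prop:A=A'}, using the graph $\GG'(C, P, \overline{Q})$ in place of $\GG(C, P, \overline{P}, 2)-\{x_{0,2}\}$ and invoking proposition \ref{prop:A''B''UCG} in place of proposition \ref{prop:A'B'UCG}. Concretely, I would exhibit a UCG realizing exactly $2\kappa+1$ intermediate vertices to get the upper bound, and rule out the value $2\kappa$ using the hypothesis $\kappa\neq \text{cov}_{A'B'}(P)$ to get the matching lower bound. This is the exact analogue, for the $|D_2|=\kappa+1$ branch, of the weak converse already proved in proposition \ref{prop:A=A'} for the $|D_1|=\kappa+1$ branch (which pairs with the forward direction, proposition \ref{prop:2k+1_case2}).

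First I would unpack the hypothesis $\text{cov}_{AA''B''}(P)=\kappa$. By definition this furnishes a covering $\overline{P}=\{P_1, \ldots, P_\kappa\}$ of $P$ satisfying condition A, together with an index $\iota$, which we relabel so that $\iota=1$, and sets $Q_0, Q_1$ with $Q_0\cup Q_1=P_1$, such that the triple $(P, \overline{P}, \overline{Q})$ satisfies conditions A$''$ and B$''$, where $\overline{Q}=\{Q_0, Q_1, P_2, \ldots, P_\kappa\}$. Since $r(P)>1$, the quantity $\kappa=\text{cov}_A(P)$ is finite and the construction that follows is legitimate.

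Next I would build the witness graph $G=\GG'(C, P, \overline{Q})$. Because $\overline{P}$ satisfies condition A and $(P, \overline{P}, \overline{Q})$ satisfies conditions A$''$ and B$''$, proposition \ref{prop:A''B''UCG} gives directly that $G$ is a UCG with $\la\ZZ(G)\ra=C$ and $\la\CP(G)\ra=P$. Its intermediate vertices are $\II(G)=\{x_1, \ldots, x_\kappa\}\cup\{y_0, y_1, \ldots, y_\kappa\}$, so $|\II(G)|=2\kappa+1$, whence $A_{ucg}(C, P)\leq 2\kappa+1$. For the matching lower bound, I would note that conditions A$'$ and B$'$ imply condition A, so $\text{cov}_{A'B'}(P)\geq \text{cov}_A(P)=\kappa$; the hypothesis $\kappa\neq \text{cov}_{A'B'}(P)$ then forces $\text{cov}_{A'B'}(P)>\kappa$. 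By proposition \ref{prop:Aucg=2k} this yields $A_{ucg}(C, P)\neq 2\kappa$, and combined with the general lower bound $A_{ucg}(C, P)\geq 2\kappa$ from proposition \ref{prop:Cbound} we obtain $A_{ucg}(C, P)\geq 2\kappa+1$. Together with the upper bound this gives $A_{ucg}(C, P)=2\kappa+1$.

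I expect no serious obstacle here, since the genuinely hard part — the bidirectional case analysis underpinning proposition \ref{prop:A''B''UCG} — is already in place and can be used as a black box. The only point demanding care is the first step: extracting from the assertion ``$\overline{P}$ satisfies A$''$ and B$''$'' the explicit witnesses $\iota$, $Q_0$, $Q_1$ and the refined cover $\overline{Q}$ needed to instantiate $\GG'$, and then checking that the number of intermediate vertices of $\GG'(C, P, \overline{Q})$ is exactly $2\kappa+1$ rather than larger.
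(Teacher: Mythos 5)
Your proposal is correct and follows essentially the same route as the paper: the paper also obtains the lower bound by combining proposition \ref{prop:Aucg=2k} with the general bound of proposition \ref{prop:Cbound}, and gets the upper bound by taking a size-$\kappa$ covering witnessing $\text{cov}_{AA''B''}(P)=\kappa$ (with $\iota=1$) and invoking proposition \ref{prop:A''B''UCG} on $\GG'(C,P,\overline{Q})$, which has exactly $2\kappa+1$ intermediate vertices. Your write-up is, if anything, slightly more explicit than the paper's about counting $|\II(\GG'(C,P,\overline{Q}))|=\kappa+(\kappa+1)$ and about why $\text{cov}_{A'B'}(P)\neq\kappa$ rules out the value $2\kappa$.
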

\begin{proof}
	Assume $cov_{A'B'}(P)\neq \kappa$ but $cov_{AA''B''}(P)=\kappa$. By proposition \ref{prop:Aucg=2k} $A_{ucg}(C, P)> 2\kappa$, so we need to show $A_{ucg}(C, P)\leq 2\kappa+1$.  Let $\overline{P}$ be a smallest covering of $P$  with respect to condition A such that there is a refined cover $\overline{Q}$ where the pair $(\overline{P}, \overline{Q})$ is smallest with respect to conditions A$''$ and B$''$.  Without loss of generality we may assume $\iota=1$. By proposition \ref{prop:A''B''UCG}  $G=\GG(C, P, \overline{Q})$ is a UCG with $C=\la\ZZ(G)\ra$, $P=\la\CP(G)\ra$ and $|\II(G)|=2\kappa+1$, which implies $A_{ucg}(C, P)\leq 2\kappa+1$.
\end{proof}

The following theorem summarizes propositions \ref{prop:Cbound}, \ref{prop:Aucg=2k}, \ref{prop:2k+1_case1}, \ref{prop:A=A'}, \ref{prop:2k+1_case2}, and \ref{prop:A=A''B''}.

\begin{thm}\label{thm:CRange}
	If $P$ is a graph with $r(P)>1$ and $\text{cov}_A(P)=\kappa$, then following holds:
	\begin{enumerate}
		\item $A_{ucg}(C, P)=2\kappa$ if and only if $\text{cov}_{A'B'}(P)=\kappa$.
		\item $A_{ucg}(C, P)=2\kappa+1$ if and only if  $\kappa\neq \text{cov}_{A'B'}(P)$ and either
		\begin{enumerate}
			\item $\text{cov}_{A'}(P)=\kappa$ or
			\item $\text{cov}_{AA''B''}(P)=\kappa$.
		\end{enumerate}
		\item $A_{ucg}(C, P)=2\kappa+2$ otherwise.
	\end{enumerate}
\end{thm}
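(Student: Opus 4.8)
The plan is to treat Theorem \ref{thm:CRange} as a packaging of the six cited propositions rather than proving anything from scratch. The starting point is proposition \ref{prop:Cbound}, which confines $A_{ucg}(C,P)$ to exactly one of the three values $2\kappa$, $2\kappa+1$, and $2\kappa+2$. Once this trichotomy is in place, each of the three clauses of the theorem becomes an \emph{iff}-characterization of which value is attained, and the whole proof reduces to assembling the earlier results and checking that the stated conditions partition the possibilities.

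First I would dispatch clause (1): it is verbatim proposition \ref{prop:Aucg=2k}, which already states $\text{cov}_{A'B'}(P)=\kappa$ if and only if $A_{ucg}(C,P)=2\kappa$. For clause (2) I would argue both directions. For the forward direction, assume $A_{ucg}(C,P)=2\kappa+1$; clause (1) immediately gives $\text{cov}_{A'B'}(P)\neq\kappa$. Choosing a witnessing UCG $H$ with $|\II(H)|=2\kappa+1$, corollary \ref{cor:rbound} forces $r(H)=3$, so $|D_1|+|D_2|=2\kappa+1$, and the counting argument built on corollary \ref{cor:Dm} and lemma \ref{lem:subcover} pins the induced cover down to exactly $\kappa$ nonempty members with either $|D_1|=\kappa+1$ or $|D_2|=\kappa+1$. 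In the first subcase proposition \ref{prop:2k+1_case1} yields $\text{cov}_{A'}(P)=\kappa$, and in the second proposition \ref{prop:2k+1_case2} yields $\text{cov}_{AA''B''}(P)=\kappa$; either way condition (2a) or (2b) holds. For the converse, starting from $\text{cov}_{A'B'}(P)\neq\kappa$ together with (2a) or (2b), I would apply proposition \ref{prop:A=A'} when $\text{cov}_{A'}(P)=\kappa$ and proposition \ref{prop:A=A''B''} when $\text{cov}_{AA''B''}(P)=\kappa$, each of which concludes $A_{ucg}(C,P)=2\kappa+1$.

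Finally, clause (3) follows purely by elimination: proposition \ref{prop:Cbound} leaves only $2\kappa+2$ once the conditions of clauses (1) and (2) both fail, and since those clauses are \emph{iff}-statements their negations exactly carve out the complementary ``otherwise'' region. The only genuine subtlety---the step I would treat most carefully---is the forward direction of clause (2): one must verify that the $2\kappa+1$ regime really does split cleanly into the two subcases $|D_1|=\kappa+1$ and $|D_2|=\kappa+1$, since this dichotomy is precisely what forces the two alternative covering invariants $\text{cov}_{A'}$ and $\text{cov}_{AA''B''}$ to appear. Everything else is bookkeeping already carried out in the preceding propositions, so no new combinatorial construction is needed.
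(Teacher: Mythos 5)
Your proposal is correct and is essentially the paper's own argument: the paper presents Theorem \ref{thm:CRange} explicitly as a summary of propositions \ref{prop:Cbound}, \ref{prop:Aucg=2k}, \ref{prop:2k+1_case1}, \ref{prop:A=A'}, \ref{prop:2k+1_case2}, and \ref{prop:A=A''B''}, with the $|D_1|=\kappa+1$ versus $|D_2|=\kappa+1$ dichotomy you flag as the key step established in the discussion immediately preceding proposition \ref{prop:2k+1_case1}. Your assembly of the pieces, including clause (3) by elimination under the trichotomy from proposition \ref{prop:Cbound}, matches the intended proof.
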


\section{Coverings}\label{sec:cov}
In this section we determine when $\kappa=\text{cov}_A(P)$ is the size of smallest coverings with respect to the other conditions described in sections \ref{sec:C=Kn} and \ref{sec:C<>Kn}.  To do this we introduce one last set of notation.  For a graph $G$, a vertex $V$ in $G$ and $s\in \Nb$ let
\[N_s[v]=\{x\in V(G)| d(v, x)\leq s\}\]
\noindent be the closed $s$ neighborhood of $v$.  When $s=1$ we simply let $N_1[x]=N[x]$.

Proposition \ref{prop:Knbound} and theorem \ref{thm:Knrange} determine $A_{ucg}(K_n, P)$ up to knowing when  $\text{cov}_A(P)=\text{cov}_{AB}(P)$.  We now determine conditions for a graph $P$ to satisfy $\text{cov}_A(P)=\text{cov}_{AB}(P)$.
\begin{prop}\label{d>3}
	If $P$ is a graph with $\text{diam}(P)\geq 3$, then $\text{cov}_A(P)=2$.
\end{prop}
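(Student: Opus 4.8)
The plan is to prove the equality by establishing the two inequalities separately. For the lower bound, observe that $\text{diam}(P)\geq 3$ forces $r(P)\geq 2$, hence $r(P)>1$, so by the remark recorded just after the definition of $\text{cov}_A(P)$ we already have $\text{cov}_A(P)\geq 2$. (Self-containedly: a covering of size one must be $\{V(P)\}$, and condition A fails for it since there is no vertex $p\notin V(P)$.) It then remains only to produce a covering of size exactly two satisfying condition A.

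For the upper bound I would write down an explicit covering. Since $\text{diam}(P)\geq 3$, fix two vertices $u,v$ with $d(u,v)\geq 3$, and set
\[P_1=V(P)\setminus N[v]\qquad\text{and}\qquad P_2=V(P)\setminus N[u].\]
The design is that $v$ serves as the witness required by condition A for $P_1$, and $u$ as the witness for $P_2$: indeed $v\notin P_1$ and every $q\in P_1$ lies outside $N[v]$, so $d(P_1,v)\geq 2$, and symmetrically $d(P_2,u)\geq 2$. Both sets are nonempty, since $u\in P_1$ and $v\in P_2$ because $d(u,v)\geq 3>1$; in particular $P_1\neq P_2$, so the covering genuinely has size two.

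The one point requiring care — and the crux of the argument — is checking that $\{P_1,P_2\}$ is actually a covering, i.e. that $P_1\cup P_2=V(P)$. This amounts to showing $N[u]\cap N[v]=\emptyset$, which I would verify directly from $d(u,v)\geq 3$: any $w\in N[u]\cap N[v]$ would satisfy $d(u,w)\leq 1$ and $d(w,v)\leq 1$, giving $d(u,v)\leq 2$, a contradiction. Hence $P_1\cup P_2=V(P)\setminus\big(N[u]\cap N[v]\big)=V(P)$, so $\{P_1,P_2\}$ is a covering of size two satisfying condition A. Combined with the lower bound, this yields $\text{cov}_A(P)=2$.
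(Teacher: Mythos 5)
Your proof is correct, but your covering is not the one the paper uses, so the work lands in a different place. The paper picks a single vertex $x$ with $e(x)\geq 3$ and takes the partition $P_1=N[x]$, $P_2=V(P)\setminus N[x]$: since this is a partition, coveringness is free, and the effort goes into exhibiting the two (asymmetric) witnesses for condition A --- $x$ itself works for $P_2$, while a vertex $y$ with $d(x,y)\geq 3$ works for $P_1$. You instead take two vertices $u,v$ with $d(u,v)\geq 3$ and cover by the complements of their closed neighborhoods, $P_1=V(P)\setminus N[v]$ and $P_2=V(P)\setminus N[u]$. That makes the condition-A witnesses immediate and symmetric by design ($v$ for $P_1$, $u$ for $P_2$), but shifts the real verification to coveringness, namely $N[u]\cap N[v]=\emptyset$. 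Both arguments ultimately rest on the same triangle-inequality fact --- two vertices at distance at least $3$ have disjoint, mutually distance-$\geq 2$ closed neighborhoods --- and both are equally elementary; the trade-off is purely where the one nontrivial check sits (witnesses in the paper's partition, versus union-equals-$V(P)$ in your overlapping cover). Your explicit treatment of the lower bound $\text{cov}_A(P)\geq 2$ matches the paper's standing observation that this holds whenever $r(P)>1$.
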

\begin{proof}
	Suppose $x$ is a vertex in $P$  satisfying  $e(x)\geq 3$.  Let $P_1=N[x]$  and $P_2=V(P) - P_1$.  Then $d(x,P_2)\geq 2$.  Since $e(x)\geq 3$, $P_2$ contains a vertex $y$ satisfying $d(x, y) \geq 3$, and so $d(y, P_1)\geq 2$.  Thus, $\{P_1, P_2\}$ is a covering of $P$ satisfying condition A.
\end{proof}

\begin{prop}\label{d>4r>3}
	If $P$ is a graph satisfying $\text{diam}(P)\geq 4$ and $r(P)\geq 3$, then $\text{cov}_{AB}(P)=2$.
\end{prop}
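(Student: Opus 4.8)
The plan is to construct a covering of size $2$ satisfying both conditions A and B, which together with the observation $\text{cov}_{AB}(P)\geq \text{cov}_A(P)\geq 2$ (valid since $r(P)>1$) forces $\text{cov}_{AB}(P)=2$. The natural starting point is to mimic the construction in proposition \ref{d>3}: since $\text{diam}(P)\geq 4$, I would pick a vertex $x$ with $e(x)=\text{diam}(P)\geq 4$ and a vertex $z$ with $d(x,z)\geq 4$, then set $P_1=N[x]$ and $P_2=V(P)-P_1$. As in proposition \ref{d>3}, this already satisfies condition A: every vertex of $P_2$ is at distance $\geq 2$ from $x\in P_1$, and $z$ witnesses a vertex at distance $\geq 3$ from $x$, so $d(z,P_1)\geq 2$, giving condition A for $P_2$; condition A for $P_1$ is witnessed by $z$ as well.

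The real work is verifying condition B for this covering, and this is where the hypotheses $\text{diam}(P)\geq 4$ and $r(P)\geq 3$ must both be used. Recall condition B requires that for each $i$ and each $p\in P_i$, either there is a $p'\notin P_i$ with $d(p,p')\geq 3$, or there is a $j\neq i$ with $d(p,P_j)\geq 2$. I would handle the two parts of the covering separately. For a vertex $p\in P_2$, I would try to exploit $r(P)\geq 3$: since every vertex has eccentricity at least $3$, each $p\in P_2$ has some vertex at distance $\geq 3$ from it, and I would argue that one can be chosen outside $P_2$ (i.e. in $P_1$), giving condition B-1; the fallback is condition B-2 via $d(p,P_1)\geq 2$. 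For a vertex $p\in P_1=N[x]$, the key is that $p$ is close to $x$, so $d(p,z)\geq d(x,z)-d(x,p)\geq 4-1=3$; since $z\in P_2$, this is exactly a witness for condition B-1.

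The step I expect to be the main obstacle is the analysis for $p\in P_2$, because condition B-1 for such $p$ requires finding a vertex \emph{outside} $P_2$ (hence in $P_1=N[x]$) at distance $\geq 3$, and a priori the vertex realizing $e(p)\geq 3$ could itself lie in $P_2$. I would resolve this by a careful case split: if the far vertex lies in $P_1$ we are done by B-1, and if it lies in $P_2$ I would instead verify condition B-2 by showing $d(p,P_1)\geq 2$, or else directly produce a vertex of $P_1$ far from $p$ using the triangle inequality against $x$ and the diameter bound. The interplay is delicate because $p$ could be adjacent to vertices of $P_1$, so I would likely need to distinguish whether $p$ is within distance $1$ of $N[x]$ and use $r(P)\geq 3$ to rule out the degenerate configurations. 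Once both parts are checked, conditions A and B hold for $\{P_1,P_2\}$, and the size-$2$ lower bound finishes the proof.
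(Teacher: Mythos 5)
Your construction of the covering is where the proof breaks down: the one-sided partition $P_1=N[x]$, $P_2=V(P)-P_1$ does not in general satisfy condition B, and both of the fallback options you sketch fail on a concrete example. Take $P$ to be the path on six vertices $x-u_1-u_2-u_3-u_4-u_5$, so $\text{diam}(P)=5\geq 4$, $r(P)=3$, and $e(x)=\text{diam}(P)$. Then $P_1=N[x]=\{x,u_1\}$ and $P_2=\{u_2,u_3,u_4,u_5\}$. For the vertex $u_2\in P_2$ we have $d(u_2,u_1)=1$ and $d(u_2,x)=2$, so every vertex of $P_1$ is within distance $2$ of $u_2$: condition B-1 has no witness (a B-1 witness for $u_2$ must lie \emph{outside} $P_2$, hence in $N[x]$), and condition B-2 fails because $d(u_2,P_1)=1$. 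This is exactly the ``degenerate configuration'' you flagged as the main obstacle, and it cannot be ruled out by $r(P)\geq 3$: the vertex realizing $e(u_2)\geq 3$ (here $u_5$) lies in $P_2$, which is useless for condition B. In general, any vertex at distance $2$ from $x$ adjacent to a neighbor of a low-degree $x$ kills this partition, so the gap is not a missing detail but a failure of the chosen covering itself; no case analysis can repair it without changing the sets.

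The paper's proof avoids this by building the covering from \emph{both} ends of a diametral pair and assigning the remaining vertices greedily. Pick $x,y$ with $d(x,y)=4$ and initialize $P_1:=N[x]$, $P_2:=N[y]$ (disjoint, with $d(x,N[y])=d(y,N[x])=3$). Then for each unassigned vertex $z$: if some vertex of $P_2$ is at distance $\geq 3$ from $z$, put $z$ in $P_1$; else if some vertex of $P_1$ is at distance $\geq 3$ from $z$, put $z$ in $P_2$; else, since $e(z)\geq r(P)\geq 3$, a vertex far from $z$ must itself be unassigned, and the two are placed on opposite sides, each serving as the other's witness. Thus every vertex enters the covering together with a distance-$\geq 3$ witness on the other side, so condition B holds by construction, and condition A follows because $N[x]\subset P_1$ forces $d(x,P_2)\geq 2$, and symmetrically for $y$. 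In the path example above, this procedure (with $y=u_4$) places $u_2$ into $P_1$ with witness $u_5\in P_2$ at distance $3$ --- precisely the reassignment your fixed partition cannot perform.
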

\begin{proof}
	Let $x$ and $y$ be vertices in  $P$ satisfying $d(x, y)=4$.  We construct $P_1$ and $P_2$ recursively.  Initialize $P_1:=N[x]$ and $P_2:=N[y]$.  Note that $d(x, N[y]) = d(y, N[x])=3$.   For a vertex $z$ in $V(P) -(P_1\cup P_2)$, update $P_1$ and $P_2$ as follows.
	\begin{enumerate}[i)]
		\item If there is a vertex $p\in P_2$ satisfying  $d(z, p)\geq 3$, then let $P_1:=P_1\cup \{z\}$.
		\item Else if there is a vertex $p\in P_1$ satisfying $d(z, p)\geq 3$, then let $P_2:=P_2\cup \{z\}$.
		\item Else there is a vertex $p\in V(P) -  (P_1\cup P_2)$ satisfying $d(z, p)=3$, since $r(P)=3$.  Let  $P_1:=P_1\cup \{z\}$ and $P_2:=P_2\cup \{p\}$.
	\end{enumerate}
	Continue until all vertices of $P$ have been accounted for. By construction $\{P_1, P_2\}$ is a covering of $P$  satisfying condition B.   Since $N[x]\subset P_1$,  $d(x, P_2)\geq 2$ and similarly $d(y, P_1)\geq 2$. Thus $\{P_1, P_2\}$ satisfies condition A.
\end{proof}

\begin{prop}\label{r=2}
	If $P$ is a graph with $r(P)=2$, then $\text{cov}_{AB}(P)\neq 2$.
\end{prop}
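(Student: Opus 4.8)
The plan is to argue by contradiction. Note first that $\text{cov}_{AB}(P)\geq \text{cov}_A(P)\geq 2$ always, and that some covering satisfying A and B exists by Lemma \ref{lem:covexist} (since $r(P)=2\geq 2$); so proving the proposition amounts to ruling out the value $2$. I therefore assume $\{P_1,P_2\}$ is a covering of $P$ of size $2$ satisfying both condition A and condition B, and extract a contradiction from $r(P)=2$.

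The first step is to exploit a radial center. Since $r(P)=2$, fix a vertex $c$ with $e(c)=2$, and without loss of generality put $c\in P_1$. Now apply condition B to $c$: the first clause is unavailable, because no vertex of $P$ lies at distance $\geq 3$ from $c$, so the second clause must hold, forcing $d(c,P_2)\geq 2$. In particular no neighbor of $c$ lies in $P_2$, i.e. $N[c]\subseteq P_1$.

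The second step is to apply condition A to $P_1$, producing a vertex $q\notin P_1$ with $d(P_1,q)\geq 2$; since $\{P_1,P_2\}$ covers $P$, we have $q\in P_2$. The contradiction comes from pinning down the position of $q$ relative to $c$. Because $q\in P_2$ we get $d(c,q)\geq 2$, while $e(c)=2$ gives $d(c,q)\leq 2$, hence $d(c,q)=2$. A shortest $c$--$q$ path then has a midpoint $w$ with $w\in N(c)\subseteq P_1$ and $d(w,q)=1$, so $d(P_1,q)\leq 1$, contradicting $d(P_1,q)\geq 2$.

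I do not expect a genuinely hard step: the whole argument is a short pigeonhole on a single central vertex. The crux—and the one place where $r(P)=2$ rather than $r(P)\geq 3$ is essential—is the forced equality $d(c,q)=2$, whose upper bound $e(c)=2$ is exactly what drops the midpoint of the $c$--$q$ geodesic into $N(c)$, which condition B applied to $c$ has already trapped inside $P_1$. The only care needed is in the definitions of $d(\cdot,\cdot)$ between a vertex and a set, and in the possibility that $P_1$ and $P_2$ overlap; the latter causes no trouble, since $d(c,P_2)\geq 2$ keeps $w\in N(c)$ out of $P_2$ and hence genuinely in $P_1$.
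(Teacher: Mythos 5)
Your proof is correct and follows essentially the same route as the paper's: both apply condition B to a central vertex $c\in P_1$ (clause 1 being unavailable since $e(c)=2$) to force $N[c]\subseteq P_1$, and then contradict condition A via the midpoint of a length-2 geodesic from $c$ into $P_2$. The only cosmetic difference is that you first invoke condition A to produce a witness $q$ and then refute its defining property, whereas the paper shows every vertex of $P_2$ lies at distance $1$ from $P_1$ and concludes condition A fails; these are the same argument.
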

\begin{proof}
	Suppose there is a covering $\{P_1, P_2\}$ of $P$ satisfying conditions A and B.  Assume $P_1$ contains a vertex  $z$ with $e(z)= 2$.   By condition B, $d(z, P_2)=2$ and so $N(z)\cap P_2=\emptyset$.  If  $p$ is a vertex in $P_2$, then $d(z, p)=2$ and so $d(P_1, p)=1$, a contradiction to condition A.
	
	%Now let $z'\in \ZZ(M)$.  We will show $z'\in M_1$.   If $z'\in N^1(z)$ then $z'\in M_1$.   So assume $z'\not\in M^1(z)$ and $z'\in M_2$.  Then $N^1(z')\cap M_1=\emptyset$, but since $d(z,z')=2$ there is a $y\in N^1(z)\cap N^1(z')$, so $y\not\in M_1\cup M_2$.  We conclude $\ZZ(M)\cup N^1(\ZZ(M))\subset M_1$.
	
\end{proof}

\begin{prop}\label{d=r=2}
	For every $\alpha\in\Nb$, there exists a graph $P$ with $r(P)=\text{diam}(P)=2$ and $\text{cov}_A(P)=2\alpha$.
\end{prop}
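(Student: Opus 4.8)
The plan is to exhibit an explicit family realizing the prescribed even values, choosing the graph so that the covering computation collapses to pure counting. For $\alpha\geq 2$ I would take $P$ to be the \emph{cocktail party graph}: let $V(P)=\{a_1,b_1,\ldots,a_\alpha,b_\alpha\}$ be partitioned into $\alpha$ pairs $\{a_i,b_i\}$, and join two vertices by an edge exactly when they lie in different pairs (equivalently $P=K_{2,\ldots,2}$ with $\alpha$ parts, the complement of a perfect matching). The first step is to record the metric data: each vertex is adjacent to all $2\alpha-2$ vertices outside its own pair, so the unique non-neighbor of a vertex $v$ is its partner, and for $\alpha\geq 2$ the two members of a pair share a common neighbor. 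Hence every two distinct vertices are at distance $1$ or $2$, no vertex is universal, and therefore $r(P)=\text{diam}(P)=2$.

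The heart of the matter is to compute $\text{cov}_A(P)$, and the construction is engineered precisely so that this becomes trivial. I would first note that a set $S\subseteq V(P)$ satisfies condition A — that is, there is a $p\notin S$ with $d(S,p)\geq 2$ — exactly when some vertex $w$ is distinct from and non-adjacent to \emph{every} vertex of $S$, i.e.\ when $S\subseteq V(P)\setminus N[w]$. In $P$, however, $N[w]$ omits only the partner of $w$, so $V(P)\setminus N[w]$ is a single vertex. Consequently the only nonempty subsets of $V(P)$ satisfying condition A are the singletons $\{v\}$, and each such singleton does satisfy it (witnessed by the partner of $v$, which lies at distance $2$). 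Any covering of $P$ satisfying condition A is therefore made up of singletons, so covering all $2\alpha$ vertices requires at least $2\alpha$ of them; conversely the $2\alpha$ singletons form a covering satisfying condition A. This yields $\text{cov}_A(P)=2\alpha$.

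The only genuine obstacle is the lower bound $\text{cov}_A(P)\geq 2\alpha$, and the entire reason for choosing the cocktail party graph is to neutralize it: since every closed neighborhood misses just one vertex, the maximal condition-A sets are forced to be singletons, and no economizing through overlap is possible. (For contrast, a graph like $K_{m,m}$ admits large condition-A sets and only produces $\text{cov}_A=4$ no matter how large $m$ is, so it is the wrong family.) I would also record the sanity check that for any graph of diameter $2$ two closed neighborhoods always meet, forcing $\text{cov}_A\geq 3$; this explains why the relevant even values are those with $\alpha\geq 2$, which is exactly the range for which the construction above gives a connected graph of radius and diameter $2$ realizing $\text{cov}_A(P)=2\alpha$.
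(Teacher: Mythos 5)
Your proposal is correct and is essentially the paper's own proof: the paper's $P^\alpha$ (vertices $e_i, f_i$ with all edges present except within the pairs $\{e_i, f_i\}$) is precisely your cocktail party graph, and its key step---any covering element containing two vertices is within distance $1$ of every outside vertex, so condition-A elements must be singletons---is the same closed-neighborhood-complement argument you give. One bonus of your write-up: your observation that in a diameter-$2$ graph any two closed neighborhoods intersect (so $\text{cov}_A(P)\geq 3$) correctly shows the case $\alpha=1$ is unattainable, a boundary case the paper's proof silently misses, since its $P^1$ degenerates to two isolated vertices, which has infinite diameter rather than $2$.
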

\begin{proof}
	Construct $P^\alpha=(V^\alpha,E^\alpha)$ as follows.  Let \[V^\alpha=\{e_i, f_i: \, 1\leq i \leq \alpha\}\] \noindent and \[E^\alpha=\{e_ie_j, f_i f_j, e_if_j:   1\leq i,j\leq \alpha, i\neq j\}.\]
	 \noindent Then $r(P^\alpha)=\text{diam}(P^\alpha)=2$.
	
	We now show $\text{cov}_A(P^\alpha)=2\alpha$.   By lemma \ref{lem:covexist} there exists a covering  \[\{P^\alpha_1, P^\alpha_2, \ldots, P^\alpha_k\}\] \noindent of $P^\alpha$ satisfying condition A.  Suppose $e_1\in P^\alpha_1$.     If $P^\alpha_1$ contains a vertex other than $e_1$,  then $d(P^\alpha_1, x)=1$ for all $x\in V^\alpha-P^\alpha_1$, a contradiction to condition A. Therefore  $P^\alpha_1=\{e_1\}$.  Similarly $|P^\alpha_i|=1$ for all $i=1,2, \ldots, k$ and $\text{cov}_A(P^\alpha)=2\alpha$.
\end{proof}

Propositions \ref{d>3} through \ref{r=2} determine when $\text{cov}_A(P)=\text{cov}_{AB}(P)$ for all graphs $P$ with $r(P)>1$ except those with $r(P)=\text{diam}(P)=2$ and those with $r(P)=\text{diam}(P)=3$.  Proposition \ref{d=r=2} gives insight into richness of the case $r(P)=\text{diam}(P)=2$.  However, we do not have any definitive results for $r(P)=\text{diam}(P)=3$.  This is discussed further in section \ref{sec:Append}.

For a non-complete graph $C$,  theorem \ref{thm:CRange} relates $A_{ucg}(C,P)$ to $\text{cov}_{A'B'}(P)$, $\text{cov}_{A'}(P)$, and $\text{cov}_{AA''B''}(P)$.  Because of proposition \ref{d=r=2}, we only consider $P$ with $\text{diam}(P)>2$.  By proposition \ref{d>3} we need to understand when the smallest coverings are of size $2$.

\begin{prop}\label{d>3'}
	$P$ is a graph with $\text{cov}_{A'B'}(P)=2$  if and only if $P$ is disconnected.
\end{prop}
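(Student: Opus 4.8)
The plan is to prove both directions by exploiting the observation that, for a two-block covering $\{P_1,P_2\}$, condition B$'$ collapses to a statement about the absence of edges between the two blocks.

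For the ($\Leftarrow$) direction, I would assume $P$ is disconnected, pick a connected component $A$, and set $P_1=A$ and $P_2=V(P)\setminus A$. Both blocks are nonempty and $\{P_1,P_2\}$ is a covering. Since $A$ and $V(P)\setminus A$ meet distinct components, $d(a,b)=\infty$ for every $a\in P_1$ and $b\in P_2$. Condition A$'$ then holds for each block through its second alternative ($d(P_1,P_2)=\infty\geq 2$), and condition B$'$ holds because $d(p,P_{3-i})=\infty\geq 2$ for every $p\in P_i$. Hence a size-$2$ covering satisfying A$'$ and B$'$ exists, so $\text{cov}_{A'B'}(P)\leq 2$. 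To pin the value at exactly $2$ I would note that the one-block covering $\{V(P)\}$ cannot satisfy A$'$: its single block has neither a vertex outside it (to serve as the witness in alternative 1) nor a second block $P_j$ with $j\neq 1$ (for alternative 2). Thus $\text{cov}_{A'B'}(P)\geq 2$ for every graph, and equality follows.

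For the ($\Rightarrow$) direction, suppose $\text{cov}_{A'B'}(P)=2$ and let $\{P_1,P_2\}$ be a covering of this minimal size satisfying A$'$ and B$'$; since covering blocks are nonempty, both $P_1$ and $P_2$ are nonempty. The key step is reading off condition B$'$: for a block with a single partner, the index $j\neq i$ in B$'$ is forced, so for every $p\in P_1$ we obtain $d(p,P_2)\geq 2$, and symmetrically $d(p,P_1)\geq 2$ for every $p\in P_2$. The first inequality forces $P_1\cap P_2=\emptyset$ (a shared vertex would have distance $0$ to $P_2$) and shows no vertex of $P_1$ has a neighbor in $P_2$; hence there are no edges between $P_1$ and $P_2$. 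As $P_1$ and $P_2$ are nonempty and cover $V(P)$, they witness a separation of $P$, so $P$ is disconnected.

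The distance computations between components are immediate once one adopts the convention that vertices in different components are at infinite distance, so they are not the difficult part. The step I would flag as the main obstacle is the bookkeeping around degenerate coverings: one must argue the minimum is genuinely $2$ rather than $1$ (handled by the A$'$ argument above) and insist that covering blocks be nonempty, so that condition B$'$ cannot be satisfied vacuously by an empty partner block. Once these conventions are fixed, condition B$'$ for a two-block covering is literally the assertion that the two blocks are mutually non-adjacent, which is precisely disconnectedness.
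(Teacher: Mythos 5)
Your proof is correct and follows essentially the same route as the paper's: the component/complement split handles the disconnected direction, and for the converse both arguments reduce condition B$'$ on a two-block covering to the absence of edges between the blocks (the paper phrases this as the contrapositive, you argue it directly). Your extra bookkeeping -- ruling out the one-block covering via condition A$'$ and noting that a shared vertex would violate $d(p,P_j)\geq 2$ -- only makes explicit what the paper leaves implicit.
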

\begin{proof}
Assume $P$ is disconnected.  Let $P_1$ be the vertices of a connected component of $P$ and let $P_2=V(P)-P_1$.   Since, for all $u\in P_1$ and $v\in P_2$, $d(u, v)=\infty$ it follows that $\{P_1, P_2\}$ satisfies conditions A$'$ and B$'$.

Next, assume $P$ is connected but $\overline{P}=\{P_1, P_2\}$ is a covering.  Since $P$ is connected, $d(P_1, P_2)=1$  and there is a $p_1\in P_1$ and $p_2\in P_2$ with $d(p_1, p_2)=1$. Therefore $d(p_1, P_2)=1$ and $d(p_2, P_1)=1$ and condition B$'$ fails, and so $\text{cov}_{A'B'}(P)>2$.
\end{proof}

\begin{prop}\label{d>5'}
	For a graph $P$, $\text{cov}_{A'}(P)=2$ if and only if $\text{diam}(P)\geq 5$.
\end{prop}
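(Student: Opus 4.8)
The statement characterizes $\text{cov}_{A'}(P)=2$ by $\text{diam}(P)\geq 5$, so I would prove both implications by analyzing what a size-$2$ covering $\overline{P}=\{P_1,P_2\}$ satisfying condition A$'$ forces. Recall condition A$'$ requires, for each $i$, either a vertex $p\notin P_i$ with $d(P_i,p)\geq 3$, or a $j\neq i$ with $d(P_i,P_j)\geq 2$. With only two parts, the index $j\neq i$ is forced to be the other part, so the second alternative becomes $d(P_1,P_2)\geq 2$. This simplifies the bookkeeping considerably, and I expect the proof to split into two regimes depending on whether $d(P_1,P_2)\geq 2$ or $d(P_1,P_2)\leq 1$.

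\textbf{Forward direction ($\text{diam}(P)\geq 5 \Rightarrow \text{cov}_{A'}(P)=2$).} I would take vertices $x,y$ with $d(x,y)=\text{diam}(P)\geq 5$ and build an explicit covering. The natural first attempt is $P_1=N_2[x]$ and $P_2=V(P)-N_1[x]$, or some variant using closed neighborhoods of radius $1$ or $2$, chosen so that $y$ is witnessed far from $P_1$ and $x$ far from $P_2$. Because $d(x,y)\geq 5$, a vertex within distance $2$ of $x$ is at distance at least $3$ from $y$, which should supply the condition A$'$-\ref{Ib:1} witnesses for both parts simultaneously. The main point to verify is that the chosen $P_1,P_2$ genuinely cover $V(P)$ while keeping the required distance-$3$ witnesses, so I would tune the radii so that every vertex lands in at least one part and the two ``extremal'' vertices $x,y$ certify the far-vertex clause.

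\textbf{Reverse direction ($\text{cov}_{A'}(P)=2 \Rightarrow \text{diam}(P)\geq 5$).} This is the harder direction and the main obstacle. Assume $\{P_1,P_2\}$ satisfies condition A$'$ and $P$ is connected (the disconnected case is handled separately, or one notes $\text{diam}=\infty$ and the claim is immediate; I would address this at the outset). When $P$ is connected, $d(P_1,P_2)=1$, so the alternative $d(P_1,P_2)\geq 2$ fails, and condition A$'$ forces alternative \ref{Ib:1} for \emph{each} $i$: there is $p_2\notin P_1$ with $d(P_1,p_2)\geq 3$ and $p_1\notin P_2$ with $d(P_2,p_1)\geq 3$. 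I would then chase distances: pick witnesses realizing $d(P_1,P_2)=1$, say $a\in P_1$, $b\in P_2$ adjacent, and combine with the far vertices $p_1,p_2$ to produce a long path. The key inequality should be of the form
\[
d(p_1,p_2)\geq d(p_1,P_2)+d(P_1,p_2)-d(P_1,P_2)
\]
or a careful triangle-inequality argument routing through the adjacent pair $a,b$, yielding $d(p_1,p_2)\geq 3+3-1=5$, hence $\text{diam}(P)\geq 5$. The delicate step is justifying that the two distance-$3$ witnesses can be ``added'' correctly: since $p_1\notin P_2$ means $p_1\in P_1$ and $d(p_1,P_2)\geq 3$, while $p_2\notin P_1$ means $p_2\in P_2$ and $d(p_2,P_1)\geq 3$, any geodesic from $p_1$ to $p_2$ must cross from $P_1$ into $P_2$, and I would argue that the crossing point forces the two far-distances to stack, giving length at least $5$. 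Making this crossing argument rigorous — ensuring no shortcut collapses the bound — is where I would spend the most care.
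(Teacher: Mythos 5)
Your proposal follows essentially the same route as the paper's proof. For the forward direction the paper sets $P_1=N_2[u]$ and $P_2=V(P)-N_2[u]$ for a pair with $d(u,v)\geq 5$, so that $u$ itself is the distance-$3$ witness for $P_2$ and $v$ is the witness for $P_1$ (any $p\in P_1$ with $d(v,p)\leq 2$ would give $d(u,v)\leq 4$). For the reverse direction the paper takes witnesses $u\in P_1$, $v\in P_2$ with $d(u,P_2)\geq 3$ and $d(v,P_1)\geq 3$, notes $N_2[u]\cap P_2=\emptyset$ and $N_2[v]\cap P_1=\emptyset$, hence $N_2[u]\cap N_2[v]=\emptyset$ (a common vertex would lie outside $P_1\cup P_2=V(P)$), and concludes $d(u,v)\geq 5$; this is the same distance-stacking you describe, and your explicit treatment of the disconnected case (where the second alternative of condition A$'$ can occur) is actually a point the paper's write-up glosses over.

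Two slips need fixing, though. First, your ``natural first attempt'' $P_1=N_2[x]$, $P_2=V(P)-N_1[x]$ does not work: vertices at distance exactly $2$ from $x$ lie in $P_2$, so no vertex of $N_1[x]$ can serve as a distance-$3$ witness for $P_2$, and since the two parts overlap, the alternative $d(P_1,P_2)\geq 2$ fails as well; you must take $P_2=V(P)-N_2[x]$. Second, the displayed inequality $d(p_1,p_2)\geq d(p_1,P_2)+d(P_1,p_2)-d(P_1,P_2)$ is false in general, precisely because parts of a covering may overlap: take the path $p_1w_1w_2w_3w_4p_2$ together with a vertex $z$ adjacent to $w_2$ and $w_3$, and set $P_1=\{p_1,w_1,w_2,z\}$, $P_2=\{z,w_3,w_4,p_2\}$; then $d(p_1,P_2)=d(P_1,p_2)=3$ and $d(P_1,P_2)=0$, yet $d(p_1,p_2)=5<6$. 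The correct statement --- which your verbal crossing argument does deliver --- is $d(p_1,p_2)\geq d(p_1,P_2)+d(P_1,p_2)-1$: on a $(p_1,p_2)$-geodesic $w_0,\ldots,w_m$ let $w_i$ be the first vertex lying in $P_2$; then $i\geq d(p_1,P_2)\geq 3$, its predecessor $w_{i-1}$ must lie in $P_1$, so $m-i+1=d(w_{i-1},p_2)\geq d(P_1,p_2)\geq 3$, whence $m\geq 3+3-1=5$. With these corrections your outline is a complete proof and coincides with the paper's argument.
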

\begin{proof}
	Assume $\text{diam}(P)\geq 5$.  Then there exist $u, v\in V(P)$ with $d(u, v)\geq 5$.  Let $P_1=\{p\in V(P): d(u, p)\leq 2\}$ and $P_2=V(P)-P_1$.  Then $d(u, P_2)\geq 3$.  Also $d(v, P_1)\geq 3$ because if there is a $p\in P_1$ with $d(v, p)\leq 2 $, then $d(u, v)\leq d(u, p)+d(p, v)\leq 4$, a contradiction. So  $\{P_1, P_2\}$ satisfies  condition A$'$.
	
	Next, assume there is a covering $\{P_1, P_2\}$ of $P$ satisfying condition A$'$.  Then there is a $u\in P_1$ such that $d(u, P_2)\geq 3$.  Then $N_2[u] \subset P_1$ and $N_2[u]\cap P_2=\emptyset$.  Similarly, $P_2$ contains a vertex $v$ such that $d(v, P_1)\geq 3$, and so  $N_2[v]\subset P_1$ and $N_2[v]\cap P_1=\emptyset$.  Therefore $d(u,v)\geq 5$.
\end{proof}

\begin{prop}\label{d=3''}
	If $P$ is a graph with $\text{diam}(P)\leq 3$, then $\text{cov}_{AA''B''}(P)\neq 2$.
\end{prop}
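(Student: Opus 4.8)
The plan is to prove the contrapositive: I will assume that $\overline{P}$, together with a refinement $\overline{Q}$ splitting some $P_\iota$ into $Q_0\cup Q_1$, satisfies conditions A, A$''$, and B$''$ with $|\overline{P}|=2$, and derive a contradiction from the hypothesis $\text{diam}(P)\le 3$. A covering of size $2$ means $\overline{P}=\{P_1,P_2\}$, and the refinement replaces one block, say $P_1$, by $Q_0,Q_1$ with $Q_0\cup Q_1=P_1$, giving $\overline{Q}=\{Q_0,Q_1,P_2\}$. With only one ``other'' block $P_2$, the many alternatives in conditions A$''$ and B$''$ collapse dramatically, since any clause of the form ``there is a $j\neq\iota$ with \dots'' forces $j=2$. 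My first step is therefore to write out exactly what A$''$ and B$''$ say in this degenerate situation and record which distance inequalities survive.

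Next I would extract the strongest consequences. The diameter bound $\text{diam}(P)\le 3$ immediately kills every clause that demands a distance $\ge 4$: in particular B$''$-\ref{B'':1d} and B$''$-\ref{B'':2b} can never be satisfied via their $d(p,p')\ge 4$ requirements, and A$''$-\ref{A'':2a} (which needs $d(Q_l,p)\ge 3$) and A$''$-\ref{A'':1a} ($d(P_i,p)\ge 3$) are the only way A$''$ can hold through a ``far vertex'' rather than through a second block. The key tension I expect to exploit is that condition A (ordinary condition A on $\overline{P}=\{P_1,P_2\}$) forces a vertex far from $P_1$ and a vertex far from $P_2$, while B$''$, applied to the vertices of $P_2$ and to the vertices of $Q_0,Q_1$, forces each such vertex to be at distance $\ge 2$ from the other blocks. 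Because there is effectively only one other block, these ``distance $\ge 2$'' demands, combined with $\text{diam}(P)\le 3$, should pin down the structure so tightly that some vertex ends up being forced to have eccentricity $\le 1$ relative to the whole graph, or that condition A is violated for $P_2$.

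Concretely, I would argue as follows. For $p\in P_2$, condition B$''$-\ref{B'':1} with the only available index $j=1$ (i.e.\ $\iota=1$) means $p$ must satisfy one of the $Q_0,Q_1$ clauses; since the $d\ge 4$ and $d\ge 3$ clauses are excluded by $\text{diam}(P)\le 3$ except A$''$'s, the surviving requirement is essentially $d(p,Q_0)\ge 2$ and $d(p,Q_1)\ge 2$, forcing $d(p,P_1)\ge 2$ for \emph{every} $p\in P_2$. Symmetrically, applying B$''$-\ref{B'':2} to $p\in Q_0$ and $p\in Q_1$ (whose only non-$\iota$ block is again $P_2$) forces $d(p,P_2)\ge 2$ for every vertex of $P_1$. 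But $d(P_1,P_2)\ge 2$ on both sides makes $P_1$ and $P_2$ mutually distance-$\ge 2$; since they cover $P$ and $\text{diam}(P)\le 3$, I then check that this either disconnects $P$ into pieces incompatible with the surviving A$''$ clauses or forces a contradiction with condition A (which demands a vertex at distance $\ge 2$ from an entire block, unattainable once the blocks are this rigidly separated under the diameter cap). I expect the main obstacle to be the bookkeeping: condition B$''$ has four sub-cases in part \ref{B'':1} and two in part \ref{B'':2}, and I must verify carefully that under $\text{diam}(P)\le 3$ every branch that does not directly give $d(\cdot,P_1)\ge 2$ (or its symmetric counterpart) is genuinely unavailable, so the ``effectively one surviving clause'' reduction is airtight rather than merely plausible.
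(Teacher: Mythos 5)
Your central step coincides with the paper's proof and is sound: applying condition B$''$-\ref{B'':2} to a vertex $p\in Q_l\subset P_1$, the only admissible index $j\neq\iota$ is $j=2$, and the alternative clause B$''$-\ref{B'':2b} demands a vertex $p'$ with $d(p,p')\geq 4$, which cannot exist when $\text{diam}(P)\leq 3$; hence $d(p,P_2)\geq 2$ for every $p\in P_1$, so $d(P_1,P_2)\geq 2$. However, your parallel reduction on the $P_2$ side rests on the false assertion that $\text{diam}(P)\leq 3$ excludes the ``$d\geq 3$'' clauses along with the ``$d\geq 4$'' ones. A graph of diameter exactly $3$ has pairs at distance $3$, so clause B$''$-\ref{B'':1c} survives, and a vertex $p\in P_2$ could satisfy $d(p,Q_0)\geq 3$ while being adjacent to $Q_1$; thus ``$d(p,P_1)\geq 2$ for every $p\in P_2$'' does not follow. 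This error happens to be harmless, because the one-sided bound $d(P_1,P_2)\geq 2$ already suffices — and it is all the paper uses.

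The genuine flaw is in how you propose to close. You plan to check that the separation either ``disconnects $P$ into pieces incompatible with the surviving A$''$ clauses'' or ``forces a contradiction with condition A.'' Both checks would come up empty. If $P_1$ and $P_2$ are mutually at distance $\geq 2$, so that $P$ is disconnected, then conditions A, A$''$ and B$''$ are all comfortably satisfiable: any disconnected graph admits such a $2$-covering by splitting into components (compare proposition \ref{d>3'}, recalling that conditions A$'$ and B$'$ imply A, A$''$ and B$''$). Likewise, condition A is made easier, not harder, by widely separated blocks, since it only asks for a vertex at distance $\geq 2$ from each block. The contradiction must instead come from the hypothesis itself: $\text{diam}(P)\leq 3$ forces $P$ to be connected, and since $P_1\cup P_2=V(P)$ with both parts nonempty (condition A supplies a vertex outside each part), connectivity gives $d(P_1,P_2)\leq 1$, contradicting $d(P_1,P_2)\geq 2$. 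This is exactly how the paper finishes; with that one-line replacement your argument is complete.
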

\begin{proof}
		Let $P$ be a  graph with $\text{diam(P)}\leq 3$.  Suppose there is a covering $\overline{P}=\{P_1, P_2\}$ of $P$ satisfying condition A such that for $\iota=1$ the triple  $(P, \overline{P}, \overline{Q})$ satisfies conditions A$''$ and B$''$, where $\overline{Q}=\{Q_0, Q_1, P_2\}$ with  $P_1=Q_0\cup Q_1$.  For each $p\in P_1$,  $d(p, P_2)\geq 2$ by condition B$''$-2 because $\text{diam}(P)\leq 3$.  This implies $d(P_1, P_2)\geq 2$.  However,  $d(P_1, P_2)\leq 1$ since $P$ is connected, a contradiction.
\end{proof}

\begin{prop}\label{r=2''}
	If $P$ is a graph with $r(P)=2 $, then $\text{cov}_{AA''B''}(P)\neq 2$.
\end{prop}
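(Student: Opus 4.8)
The plan is to argue by contradiction, mirroring the structure of the proof of Proposition \ref{d=3''} but exploiting the stronger hypothesis $r(P)=2$ in place of $\text{diam}(P)\leq 3$. Suppose for contradiction that $\text{cov}_{AA''B''}(P)=2$, so there is a covering $\overline{P}=\{P_1,P_2\}$ of $P$ satisfying condition A together with a refinement $\overline{Q}=\{Q_0,Q_1,P_2\}$ (with $P_1=Q_0\cup Q_1$, taking $\iota=1$) satisfying conditions A$''$ and B$''$. Since $r(P)=2$, there exists a vertex $z$ with $e(z)=2$; the heart of the argument will be to track which part of the covering $z$ lives in and to derive a violation of condition A.

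First I would handle the case $z\in P_2$. Since $e(z)=2$, for every vertex $p\in V(P)$ we have $d(z,p)\le 2$, so in particular $d(z,p)\le 2$ for all $p\in P_1=Q_0\cup Q_1$ and for all $p\in P_2$. I would apply condition B$''$-\ref{B'':1} to $z\in P_2$ (here the roles are $i=2\neq\iota=1$) and examine each of its four alternatives \ref{B'':1a}--\ref{B'':1d}: alternative \ref{B'':1a} asks for a $j\neq 1$ with $d(z,P_j)\ge 2$, but $P_2$ is the only such block and $z\in P_2$ forces $d(z,P_2)=0$; alternatives \ref{B'':1b} and \ref{B'':1c} demand $d(z,Q_l)\ge 2$ for some $l$, contradicting $e(z)=2$ together with the fact that $Q_0\cup Q_1=P_1\neq\emptyset$; and alternative \ref{B'':1d} demands some $p'\in Q_l$ with $d(z,p')\ge 4$, again impossible since $e(z)=2$. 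Thus $z\in P_2$ is untenable.

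Next I would treat $z\in P_1=Q_0\cup Q_1$, say $z\in Q_0$ without loss of generality, and apply condition B$''$-\ref{B'':2} to $z\in Q_0$ (with $l=0$). Alternative \ref{B'':2a} asks for a $j\neq 1$ with $d(z,P_j)\ge 2$, i.e.\ $d(z,P_2)\ge 2$, contradicting $e(z)=2$; alternative \ref{B'':2b} demands a $p'\in P_1-Q_0$ with $d(z,p')\ge 4$, impossible since $e(z)=2$. So $z\in Q_0$ (and symmetrically $z\in Q_1$) is also untenable. Having ruled out both $z\in P_1$ and $z\in P_2$, and since $\{P_1,P_2\}$ covers $V(P)$, I reach a contradiction, establishing $\text{cov}_{AA''B''}(P)\neq 2$.

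The main obstacle I anticipate is bookkeeping rather than conceptual difficulty: each of conditions B$''$-\ref{B'':1} and B$''$-\ref{B'':2} offers several escape clauses, and I must confirm that \emph{every} clause is blocked by the single inequality $e(z)\le 2$. The cleanest way to package this is to observe that $e(z)=2$ means $d(z,w)\le 2$ for all $w\in V(P)$, so any clause requiring a distance $\ge 3$ (or $\ge 4$) from $z$ is immediately void, and any clause requiring $d(z,P_j)\ge 2$ for the unique block $P_j=P_2$ with $j\neq 1$ fails whenever $z$ can reach $P_2$ within distance $2$—which is automatic. I would double-check the degenerate possibility $Q_0=\emptyset$ or $Q_1=\emptyset$ (permitted by the definition, as the remark after condition B$''$ notes), but this only strengthens the contradiction since it forces $z$ into the nonempty part. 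The argument does not even require condition A$''$ or A, only B$''$ and $r(P)=2$, which makes it robust.
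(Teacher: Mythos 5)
Your argument breaks down at the same point in both of its cases: you repeatedly treat a clause demanding a distance of at least $2$ from $z$ as if it were voided by $e(z)=2$. But eccentricity $2$ only forbids distances of $3$ or more; it is entirely consistent with $d(z,Q_0)\geq 2$, $d(z,Q_1)\geq 2$, or $d(z,P_2)\geq 2$, since a set can lie at distance exactly $2$ from $z$. Concretely, in your case $z\in P_2$ the clause B$''$-1b (that $d(z,Q_0)\geq 2$ and $d(z,Q_1)\geq 2$) is not blocked by $e(z)=2$, and in your case $z\in Q_0$ the clause B$''$-2a (that $d(z,P_2)\geq 2$) is not blocked either. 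So in each case an escape clause of condition B$''$ remains open and no contradiction follows. (Your parenthetical remark about empty $Q_l$ has the same flaw in reverse: if $Q_1=\emptyset$ then B$''$-1c holds vacuously for every $p\in P_2$, so emptiness weakens, rather than strengthens, the contradiction you are after.)

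The paper's proof does exactly the extra work needed to close these two clauses. For a central vertex $c\in Q_0$, condition B$''$-2 does force $d(c,P_2)\geq 2$ (only the $d\geq 4$ alternative is killed by centrality), and then $d(c,P_2)=2$ exactly because $c$ is central; the paper takes a dmpath $c-x-p$ with $p\in P_2$, observes that $x\in P_1$, $d(x,P_2)=1$ and $e(x)\leq 3$, and applies B$''$-2 to the intermediate vertex $x$ rather than to $c$ --- for $x$ both alternatives really are blocked. For a central vertex $c\in P_2$, condition B$''$-1 forces its alternative (b), namely $d(c,Q_0)\geq 2$ and $d(c,Q_1)\geq 2$; the paper then takes dmpaths $c-x-q_0$ and $c-y-q_1$, shows $x,y\in P_2$, hence $d(P_2,Q_l)\leq 1$ for $l=0,1$, and obtains the contradiction from condition A$''$-1 applied to $i=2$, not from B$''$ at all. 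This also shows that your closing claim --- that neither condition A nor A$''$ is needed --- is unsupported: the paper's second case genuinely relies on A$''$.
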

\begin{proof}
	Let $P$ be a graph with $r(P)=2$.  Suppose there is a covering $\overline{P}=\{P_1, P_2\}$ of $P$ satisfying condition A such that for $\iota=1$ the triple  $(P, \overline{P}, \overline{Q})$ satisfies conditions A$''$ and B$''$, where $\overline{Q}=\{Q_0, Q_1, P_2\}$ with   $P_1=Q_0\cup Q_1$.
	
	For a central vertex $c\in V(P)$ either  $c\in P_1$ or $c\in P_2$.
	
	Suppose $c\in P_1$.  Without loss of generality assume $c\in Q_0$.  By condition B$''$-2, $d(c, P_2)\geq 2$.  In fact, $d(c, P_2)=2$ because $c$ is central, and so there exists a $p\in P_2$ with $d(c, p)=2$. Let $c-x-p$ be a dmpath.  Then $x\in P_1$, $e(x)\leq 3$ and $d(x, P_2)=1$.  Therefore, $\{P_1, P_2\}$ does not satisfy condition $B''$-2 for $x$, a contradiction.
	
	Next, suppose $c\in P_2$.  By the hypotheses $c$ does not satisfy B$''$-1a, B$''$-1c, and B$''$-1d.  Therefore, $d(c, Q_0)\geq 2$ and $d(c, Q_1)\geq 2$.  Since $c$ is a central vertex,  $d(c, Q_0)= 2$ and $d(c, Q_1)= 2$.  Then there exist $q_0\in Q_0$ and $q_1\in Q_1$, satisfying  $d(c, q_0)= 2$ and $d(c, q_1)= 2$.  Let $c-x-q_0$ and $c-y-q_1$ be paths.  Then both $x$ and $y$ are in $P_2$ and $d(P_2, Q_i)\leq 1$ for $i=0, 1$.  Therefore $\{P_1, P_2\}$ does not satisfy condition A$''$-1.
	
\end{proof}
To fully understand $A_{ucg}(C,P)$ there are still two cases left to consider,  when $\text{diam}(P)=4$ and $r(P)=3$, and  $\text{diam}(P)=4$ and $r(P)=4$.  In these cases $\text{cov}_{A'B'}(P)\neq 2$ and $\text{cov}_{A'}(P)\neq 2$.   When $\text{diam}(P)=r(P)=4$, $\text{cov}_{AA''B''}(P)=2$ depends on $P$.  To show that there exist graphs $P$ with  $\text{cov}_{AA''B''}(P)\neq 2$, we introduce the following lemma.

%After proposition \ref{r=2''}

\begin{lemma}\label{lem:2balls}
	If $P$ is a graph with $\text{diam}(P)= 4$ and $\text{cov}_{AA''B''}(P)= 2$, then there exist three vertices $x_1, x_2, x_3$ such that \[N_2[x_1]\cap N_2[x_2]\cap N_2[x_3]=\emptyset.\]
\end{lemma}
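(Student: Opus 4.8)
The plan is to argue by contrapositive: I will assume that for every triple of vertices $x_1, x_2, x_3$ the intersection $N_2[x_1]\cap N_2[x_2]\cap N_2[x_3]$ is nonempty, and derive that $\text{cov}_{AA''B''}(P)\neq 2$. So suppose toward a contradiction that $\text{cov}_{AA''B''}(P)= 2$, witnessed by a covering $\overline{P}=\{P_1, P_2\}$ satisfying condition A, together with a refinement $\overline{Q}=\{Q_0, Q_1, P_2\}$ with $P_1=Q_0\cup Q_1$ for which $(P, \overline{P}, \overline{Q})$ satisfies conditions A$''$ and B$''$ (taking $\iota=1$). The goal is to locate three specific vertices whose closed $2$-neighborhoods are forced to be disjoint by the structural conditions, contradicting the standing assumption.

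First I would unpack what A$''$ and B$''$ say when there are only two blocks $P_1, P_2$. Because many of the clauses in A$''$ and B$''$ quantify over indices $j\neq\iota$, and here the only such index is $j=2$ (corresponding to the block $P_2$), the conditions collapse to statements about distances between a vertex and $P_2$, or between $Q_0, Q_1, P_2$ pairwise. Concretely, condition A$''$-\ref{A'':2} forces each of $Q_0$ and $Q_1$ to be ``far'' from something: either there is a vertex $p\notin P_1$ with $d(Q_l, p)\geq 3$ (clause a), or $d(Q_l, P_2)\geq 2$ (clause b). Since $\text{diam}(P)=4$, clause (a) for $Q_l$ produces a vertex within distance roughly controlled by the diameter, and clause (b) gives a genuine $2$-separation from $P_2$. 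I expect to extract from these clauses three witness vertices: one deep inside $Q_0$, one deep inside $Q_1$, and one in $P_2$, each ``centered'' so that its $2$-neighborhood captures its own block while excluding the other two blocks.

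The core computation is a distance-chasing argument. From condition A (plus the $\text{diam}(P)=4$ hypothesis) I would first produce a vertex $u$ with $d(u, P_2)$ large, and symmetrically a vertex $v\in P_2$ with $d(v, P_1)$ large, much as in the proofs of Propositions \ref{d>3} and \ref{d>5'}. Then, using the finer structure of A$''$-\ref{A'':2} applied separately to $Q_0$ and to $Q_1$, I would pin down candidate centers $x_1$ (witnessing $Q_0$), $x_2$ (witnessing $Q_1$), and $x_3$ (witnessing $P_2$), and verify that any vertex $w$ lying in all three balls $N_2[x_1], N_2[x_2], N_2[x_3]$ would be simultaneously within distance $2$ of $Q_0$, of $Q_1$, and of $P_2$. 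The triangle inequality would then bound some forbidden distance (for instance $d(Q_0, P_2)$ or a distance that condition A$''$-\ref{A'':2} or B$''$ requires to be $\geq 3$) by $\leq 4$ or less, contradicting whichever clause was used to select the centers. Hence $N_2[x_1]\cap N_2[x_2]\cap N_2[x_3]=\emptyset$, completing the contrapositive.

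The main obstacle I anticipate is the case analysis arising from the disjunctions in conditions A$''$ and B$''$: each of $Q_0, Q_1$ can satisfy A$''$-\ref{A'':2} via clause (a) or clause (b), and the vertices in each block can satisfy B$''$ through several alternative clauses (\ref{B'':1a}--\ref{B'':1d}, \ref{B'':2a}--\ref{B'':2b}). I would need to check that in \emph{every} combination of these alternatives one can still produce three centers with pairwise-incompatible $2$-neighborhoods, and that the $\text{diam}(P)=4$ bound is exactly strong enough to make the triangle-inequality contradiction go through (with larger diameter the separation would be automatic, and with smaller diameter Proposition \ref{d=3''} already applies). Managing this bookkeeping cleanly—ideally by isolating a single distance inequality that must fail—will be the delicate part, but the geometric picture of three mutually $2$-separated regions forced by a two-block $AA''B''$-covering is what drives the whole argument.
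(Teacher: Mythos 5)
Your high-level skeleton (assume a two-block covering $\{P_1,P_2\}$ with refinement $\{Q_0,Q_1,P_2\}$, unpack conditions A$''$ and B$''$, and produce three centers whose $2$-balls cannot all meet) is the shape the paper uses too, but both of the mechanisms you propose for finishing fail, and the step that carries the real weight is absent. First, the witness composition is wrong: you want one center ``deep inside $Q_0$,'' one ``deep inside $Q_1$,'' and one in $P_2$, each ball excluding the other two blocks. The conditions never supply a vertex of $Q_1$ whose ball misses $Q_0$ (or vice versa): condition A$''$-2a only supplies vertices \emph{of $P_2$} far ($\geq 3$) from $Q_l$, and A$''$-1a only supplies a vertex of $P_1$ far from $P_2$. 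The composition that the conditions actually deliver is two centers $p_0,p_1\in P_2$ (with $d(p_0,Q_0)\geq 3$ and $d(p_1,Q_1)\geq 3$) and one center $q\in P_1$ with $d(q,P_2)\geq 3$; then any $w$ in all three balls lies in none of $Q_0$, $Q_1$, $P_2$, contradicting that these three sets cover $V(P)$. Second, your closing mechanism --- chaining the triangle inequality through a common point $w$ to bound some pairwise or block-to-block distance by $\leq 4$ --- can never bite: conditions A$''$ and B$''$ only demand separations $\geq 2$ or $\geq 3$, and since $\text{diam}(P)=4$ no two vertices are ever at distance $\geq 5$, so a bound of $\leq 4$ contradicts nothing; likewise, $w$ being within distance $2$ of all of $Q_0$, $Q_1$, $P_2$ is perfectly consistent with the conditions. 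The exclusion has to be set-theoretic (each ball misses an entire block of the cover), not metric.

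Third, and most seriously, the disjunction you defer as ``bookkeeping'' is where the proof actually lives, and your plan has no route through its hard branch. Condition A$''$-2 for $Q_0$ may hold via clause (b), i.e.\ $d(P_2,Q_0)\geq 2$, in which case no $P_2$-vertex far from $Q_0$ is guaranteed and the center selection above is impossible. The paper shows this branch cannot occur: connectivity (from finite diameter) forces, after relabeling, $d(P_2,Q_1)\leq 1$, which kills A$''$-2b for $l=1$ and yields $p_1\in P_2$ with $d(Q_1,p_1)\geq 3$; if in addition $d(P_2,Q_0)\geq 2$, one deduces $N_2[p_1]\subset P_2$, hence $d(p_1,Q_0)=4$ exactly (this is where $\text{diam}(P)=4$ enters), and then the vertex $z$ adjacent to $Q_0$ on a $(p_1,q_0)$-dmpath must lie in $Q_1$ while satisfying $d(z,P_2)\leq 1$ and $d(z,Q_0)\leq 1$, violating condition B$''$-2. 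Only this contradiction establishes that A$''$-2a holds for both $Q_0$ and $Q_1$, which is what makes $p_0$ and $p_1$ exist; and only after both clauses (b) and (c) of A$''$-1 are known to fail does clause (a) produce $q$. Without this dmpath argument --- the only place B$''$ and the exact value of the diameter are used --- the proof does not go through.
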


\begin{proof}
		Suppose there is a covering $\overline{P}=\{P_1, P_2\}$ of $P$ satisfying condition A such that for $\iota=1$ the triple  $(P, \overline{P}, \overline{Q})$ satisfies conditions A$''$ and B$''$, where $\overline{Q}=\{Q_0, Q_1, P_2\}$ with  $P_1=Q_0\cup Q_1$.
		
		Since $P$ is connected  without loss of generality we may assume that $d(P_2, Q_1)\leq 1$.  Then A$''$-\ref{A'':2b} is not satisfied for $l=1$ and so, from A$''$-\ref{A'':2a}, there exists a $p_1\in P_2$ such that $d(Q_1, p_1)\geq 3$.  For $l=0$ there are two cases for  condition A$''$-\ref{A'':2}, either part \ref{A'':2a} is met or part \ref{A'':2b} is met.

		For $l=0$, either there exists a $p_0\in P_2$ such that $d(Q_0, p_0)\geq 3$ or $d(P_2, Q_0)\geq 2$ by condition A$''$-\ref{A'':2b}.
		
		Suppose  $d(P_2, Q_0)\geq 2$, by connectivity of $P$ we know $d(P_2, Q_1)\leq 1$ and $d(Q_0, Q_1)\leq 1$. Since $d(p_1, Q_1)\geq 3$, $N_2[p_1]\cap Q_1=\emptyset$ and so \[N_2[p_2]\subset (P_2\cup Q_0)-Q_1.\]
		Since $p_1\in P_2$ and $d(Q_0, P_2)\geq 2$, $N_1[p_1]\cap Q_0=\emptyset$ and  $N_1[p_1]\subset P_2$. From
		\[2\leq d(Q_0, P_2)\leq d(Q_0, N_1[p_1]),\]
		it follows that $N_2[p_1]\subset P_2$.  Therefore, $d(N_2[p_1], Q_0)\geq 2$ and $d(p_1, Q_0)\geq 4$.  Since $\text{diam}(P)=4$, $d(p_1, Q_0)= 4$.
		
		 Let  $p_1-x-y-z-q_0$ be a dmpath  for some $q_0\in Q_0$.  Then $z\not\in Q_0$ since $d(z, p_1)=3$, and $z\not\in P_2$ since $d(q_0, z)=1$.  Hence $z\in Q_1$.  Furthermore, $y\in N_2[p_1]\subset P_2$,  so $d(z, P_2)\leq 1$ and $d(z, Q_0)\leq 1$.  This implies condition B$''$-\ref{B'':2} is not met for $z\in Q_1$, a contradiction.  Therefore $d(P_2, Q_0)\leq 1$.
		
		  Then, for $l=0$ condition A$''$-\ref{A'':2a} is satisfied and there is a $p_0\in P_2$ such that  $d(Q_0, p_0)\geq 3$.  Therefore, $N_2[p_0]\cap Q_0=\emptyset$. Since $N_2[p_1]\cap Q_1=\emptyset$, it follows that $N_2[p_0]\cap N_2 [p_1]\subset P_2$.
		
		  From condition A$''$-\ref{A'':1}  there  exists a $q\in Q_0$ such that $d(q, P_2)\geq 3$.  Then $N_2[q]\cap P_2=\emptyset$ and
		  \[N_2[p_0]\cap N_2[p_1]\cap  N_2[q]=\emptyset.\]		
	\end{proof}
\begin{prop}\label{r4d4<>2}
	There exists a graph $P$ with $r(P)=\text{diam}(P)= 4$ and $\text{cov}_{AA''B''}(P)\neq 2$.
\end{prop}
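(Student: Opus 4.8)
The plan is to produce a single explicit graph and apply the contrapositive of Lemma \ref{lem:2balls}. Since that lemma asserts that $\text{diam}(P) = 4$ together with $\text{cov}_{AA''B''}(P) = 2$ forces the existence of three vertices whose closed $2$-neighborhoods have empty common intersection, it suffices to exhibit a graph $P$ with $r(P) = \text{diam}(P) = 4$ in which every three closed $2$-neighborhoods meet. My candidate is $P = Q_4$, the $4$-dimensional hypercube, whose vertices are the binary strings of length $4$ with adjacency given by Hamming distance $1$.

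First I would record the two easy structural facts. Because $Q_4$ is vertex-transitive, all eccentricities coincide, so $r(Q_4) = \text{diam}(Q_4)$; and the diameter is the maximal Hamming distance, namely $4$. Thus $r(Q_4) = \text{diam}(Q_4) = 4$, as required by the proposition.

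The heart of the argument is a counting estimate on the size of a $2$-ball. For any vertex $v$, the number of vertices within Hamming distance $2$ of $v$ is $\binom{4}{0} + \binom{4}{1} + \binom{4}{2} = 11$, so the complementary ``far set'' $V(Q_4) \setminus N_2[v]$, consisting of vertices at distance $\geq 3$ from $v$, has exactly $16 - 11 = 5$ elements. Consequently, for any three vertices $x_1, x_2, x_3$, De Morgan gives that $V(Q_4) \setminus (N_2[x_1] \cap N_2[x_2] \cap N_2[x_3])$ is the union of the three far sets and so has at most $3 \cdot 5 = 15 < 16$ elements. Hence $N_2[x_1] \cap N_2[x_2] \cap N_2[x_3] \neq \emptyset$ for every triple.

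Combining these observations: since $\text{diam}(Q_4) = 4$ but no three closed $2$-neighborhoods of $Q_4$ are disjoint, the contrapositive of Lemma \ref{lem:2balls} yields $\text{cov}_{AA''B''}(Q_4) \neq 2$, which proves the statement. The only genuine content here is the selection of the graph: one needs a self-centered graph of diameter $4$ whose $2$-balls are so large that three of the corresponding far sets cannot exhaust the vertex set, and the hypercube is the natural candidate precisely because each far set occupies strictly fewer than a third of the vertices. I expect the main obstacle to be exactly this search for a suitable example — cycles such as $C_8$ and $C_9$ fail, since their $5$-vertex $2$-balls are just barely too small and admit disjoint triples — rather than the verification, which reduces to the one-line union bound above.
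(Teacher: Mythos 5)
Your proof is correct, and it rests on the same key lemma as the paper's: both arguments apply the contrapositive of Lemma \ref{lem:2balls} to a single explicit graph. The difference lies in the choice of example and, more importantly, in how the hypothesis of that contrapositive is verified. The paper takes the hexagonal prism ($12$ vertices), observes that the complement of each closed $2$-ball is a $4$-vertex star, and then argues that three such stars would have to tile the prism exactly, which ``one can check'' is impossible --- a step that genuinely requires a finite case analysis. Your choice of $Q_4$ ($16$ vertices, far sets of size $\binom{4}{3}+\binom{4}{4}=5$) replaces that case analysis with a one-line union bound: $3\cdot 5 = 15 < 16$, so no three far sets can cover the vertex set and hence every three closed $2$-balls intersect. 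Your eccentricity computation ($r(Q_4)=\mathrm{diam}(Q_4)=4$ by vertex-transitivity and Hamming distance) and the application of the contrapositive are both sound. What your route buys is a verification that is fully rigorous with no unstated checking, and it is robust in the sense that it would survive any relabeling or small perturbation of the counting; what the paper's smaller example buys is economy of the witness graph (twelve vertices rather than sixteen) at the cost of an exhaustive-check step left to the reader.
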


\begin{proof}
	Let $P$ be the graph of a hexagonal prism as in figure \ref{fig:hex*2}.   Then $r(P)=\text{diam}(P)=4$. If $\text{cov}_{AA''B''}(P)= 2$ then by lemma \ref{lem:2balls} there exist vertices $x_1, x_2, x_3$ such that
	\[N_2[x_1]\cap N_2[x_2]\cap N_2[x_3]=\emptyset,\]
	or equivalently,
	\[N_2[x_1]^c\cup N_2[x_2]^c\cup N_2[x_3]^c = V(P).\]
	For any vertex $x$ of $P$, the compliment of $N_2[x]$ is a star with three pendant vertices as in figure \ref{fig:star}.  Because the prism has twelve vertices, a set of   three stars would cover the prism with no overlap.  However, one can check this is not possible and so $\text{cov}_{AA''B''}(P)\neq 2$

	\begin{figure}[h!]
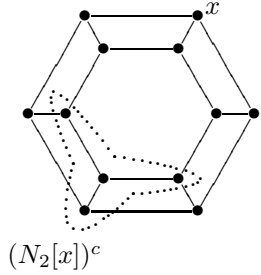

		\centering
		%\begin{subfigure}{0.5\textwidth}
				\[\xy
				(10,0)*{\bullet}="11";(5,8.66)*{\bullet}="21";(-5,8.66)*{\bullet}="31";(-10,0)*{\bullet}="41";
				(5,-8.66)*{\bullet}="61";(-5,-8.66)*{\bullet}="51";
				(15,0)*{\bullet}="12";(7.5,13)*{\bullet}="22";(-7.5,13)*{\bullet}="32";(-15,0)*{\bullet}="42";
				(7.5,-13)*{\bullet}="62";(-7.5,-13)*{\bullet}="52";
				"11";"21"**\dir{-};"21";"31"**\dir{-};"31";"41"**\dir{-};"41";"51"**\dir{-};
				"51";"61"**\dir{-};"61";"11"**\dir{-};	"12";"22"**\dir{-};"22";"32"**\dir{-};"32";"42"**\dir{-};"42";"52"**\dir{-};
				"52";"62"**\dir{-};"62";"12"**\dir{-};
				"11";"12"**\dir{-};"21";"22"**\dir{-};"31";"32"**\dir{-};"41";"42"**\dir{-};
				"51";"52"**\dir{-};"61";"62"**\dir{-};
				"22"+(2,1)*{x};
				"51"+(-4, 2)*{};"51"+(1.5,3)*{}**\crv{~*=<3pt>{.} (-17, 12)};
				"51"+(4, -2)*{};"51"+(1.5,3)*{}**\crv{~*=<3pt>{.} (18, -9)};
				"51"+(4, -2)*{};"51"+(-4, 2)*{}**\crv{~*=<3pt>{.} (-13, -22)};
				"52"+(-4,-6)*{(N_2[x])^c};
				\endxy\]
			%\subcaption{A hexagonal prism.}\label{fig:hex*2}
		%\end{subfigure}
		%\begin{subfigure}{0.4\textwidth}
		%			\[\xy
		%			(0,4)*{\bullet};(0,0)*{\bullet}**\dir{-};
		%			(3.4,-2)*{\bullet};(0,0)*{\bullet}**\dir{-};
		%			(-3.4,-2)*{\bullet};(0,0)*{\bullet}**\dir{-};
		%			(0,15)*{};(0,-13)*{};
		%			\endxy\]
					
		%	\subcaption{The compliment of $N_2[x]$.}\label{fig:star}
		%\end{subfigure}
		\caption{A hexagonal prism and the compliment of a $2$-neigborhood.}\label{fig:UFOn}\label{fig:hex*2}\label{fig:star}
	\end{figure}
\end{proof}

\begin{prop}\label{r4d4=2}
	There exists a graph $P$ with $r(P)=\text{diam}(P)= 4$ and $\text{cov}_{AA''B''}(P)= 2$.
\end{prop}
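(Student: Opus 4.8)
The plan is to prove the existence by exhibiting an explicit graph together with an explicit covering and refinement, and then verifying the three conditions directly. The natural candidate is the $8$-cycle $C_8$: it is vertex-transitive with $r(C_8)=\text{diam}(C_8)=4$, so the radius/diameter requirement is automatic, and since $\text{cov}_{AA''B''}$ is by definition the size of a smallest covering satisfying condition A (together with A$''$ and B$''$), we have $\text{cov}_{AA''B''}(C_8)\geq \text{cov}_A(C_8)=2$ by proposition \ref{d>3}. Hence it suffices to produce a \emph{single} size-two covering $\overline{P}=\{P_1,P_2\}$, a splitting $P_1=Q_0\cup Q_1$ with $\iota=1$ and $\overline{Q}=\{Q_0,Q_1,P_2\}$, so that $\overline{P}$ satisfies condition A and the triple $(P,\overline{P},\overline{Q})$ satisfies conditions A$''$ and B$''$. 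Lemma \ref{lem:2balls} guides the search and explains why $C_8$ can succeed where the hexagonal prism of proposition \ref{r4d4<>2} failed: in $C_8$ each set $(N_2[v])^c$ is a three-vertex antipodal arc, and three such arcs can be chosen with empty common intersection, so the necessary condition is not obstructed.

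Labelling the vertices $v_0,\ldots,v_7$ cyclically, so that $d(v_i,v_j)=\min(|i-j|,\,8-|i-j|)$ and every vertex has eccentricity $4$, I would take
\[P_2=\{v_3,v_4,v_5\},\quad Q_0=\{v_0,v_1,v_2\},\quad Q_1=\{v_0,v_6,v_7\},\quad P_1=Q_0\cup Q_1=\{v_0,v_1,v_2,v_6,v_7\}.\]
The design principle is that $P_2$ is the arc antipodal to $v_0$, so $v_0$ is at distance at least $3$ from every vertex of $P_2$; this single fact gives condition A$''$ clause \ref{A'':1a} for $i=2$ outright and makes condition A immediate (with $v_4$ witnessing $d(P_1,v_4)=2$). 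The sets $Q_0$ and $Q_1$ are the two halves of the complementary arc, overlapping only in $v_0$, each wrapping toward one end of $P_2$ so that the \emph{opposite} end ($v_5$ for $Q_0$, $v_3$ for $Q_1$) lies at distance $3$ from it, which is exactly what A$''$ clause \ref{A'':2a} and B$''$ clause \ref{B'':1c} will ask for.

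The remainder is a finite set of distance checks. Condition A and condition A$''$ follow from the observations above. For condition B$''$ part \ref{B'':1} one inspects the three vertices of $P_2$: $v_4$ sits at distance $2$ from both $Q_0$ and $Q_1$ (clause \ref{B'':1b}), while $v_3$ and $v_5$ lie at distance $3$ from $Q_1$ and $Q_0$ respectively (clause \ref{B'':1c}). The step I expect to be the genuine obstacle is condition B$''$ part \ref{B'':2}, since it is the most rigid: for each $p\in Q_l$ one needs either $d(p,P_2)\geq 2$ or an antipodal witness $p'\in P_1-Q_l$ with $d(p,p')\geq 4$ and $d(p,Q_{l'})\geq 2$. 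The only troublesome vertices are $v_2\in Q_0$ and $v_6\in Q_1$, both adjacent to $P_2$; here the diameter-$4$ antipodal pairs of the cycle rescue the argument, since $d(v_2,v_6)=4$ with $v_6\in P_1-Q_0$ and $v_2\in P_1-Q_1$, while $d(v_2,Q_1)=d(v_6,Q_0)=2$. Once every case is verified, $\overline{P}$ exhibits a size-two covering satisfying conditions A, A$''$ and B$''$, so $\text{cov}_{AA''B''}(C_8)=2$ and $C_8$ is the required graph.
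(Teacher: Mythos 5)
Your proof is correct; I checked every distance claim in $C_8$ and each clause goes through as you say: condition A via $d(P_1,v_4)=2$ and $d(P_2,v_0)=3$; condition A$''$-1a for $i=2$ via $v_0$; condition A$''$-2a via $d(Q_0,v_5)=3$ and $d(Q_1,v_3)=3$; condition B$''$-1 via clause (b) for $v_4$ and clause (c) for $v_3,v_5$; and condition B$''$-2, where indeed only $v_2$ and $v_6$ fail clause (a) and are rescued by the antipodal pairs $d(v_2,v_6)=4$ together with $d(v_2,Q_1)=d(v_6,Q_0)=2$. Your method is the same as the paper's --- exhibit one explicit graph, one covering $\{P_1,P_2\}$, and one refinement $\{Q_0,Q_1,P_2\}$, then verify the three conditions directly --- but your witness is different: the paper uses the heptagonal prism ($14$ vertices, with $Q_0$, $Q_1$, $P_2$ read off a figure and the verification left to the reader as ``one can verify''), whereas you use the $8$-cycle, which is smaller, and you actually carry out the full case check. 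Your choice also pairs more cleanly with Lemma \ref{lem:2balls} and Proposition \ref{r4d4<>2}: in $C_8$ the sets $(N_2[v])^c$ are antipodal $3$-vertex arcs, three of which can be made disjointly placed with empty common intersection, so the necessary condition of the lemma is visibly unobstructed, while in the hexagonal prism it visibly fails; this makes the contrast between the two $r=\mathrm{diam}=4$ outcomes more transparent than the paper's hexagonal-versus-heptagonal prism pairing. One small point of care: the paper defines $\overline{P}$ to satisfy A$''$ (resp.\ B$''$) when \emph{some} triple $(P,\overline{P},\overline{Q})$ does, and the definition of $\mathrm{cov}_{AA''B''}$ implicitly wants a single $\overline{Q}$ witnessing both; your construction supplies one common $\overline{Q}$, so this is not an issue, but it is worth stating explicitly that the same refinement is used for both conditions.
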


\begin{proof}
	Let $P$ be the graph of a heptagonal prism as in figure \ref{fig:hept*2}.  Then $r(P)=\text{diam}(P)=4$.  Consider the covering $\overline{P}=\{P_1, P_2\}$, where $P_1=Q_0\cup Q_1$ as in figure \ref{fig:hept*2}.  Here vertices of $Q_0$ are represented by the open circles, $Q_1$ by the open squares, and $P_2$ by the filled circles.  One can verify this covering satisfies conditions A, A$''$ and B$''$, and so $\text{cov}_{AA''B''}(P)= 2$.
	\begin{figure}[htbp!]
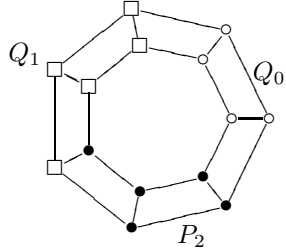

	\[\xy
	(10,0)*{\circ}="11";(6.2,7.8)*{\circ}="21";(-2.2,9.75)*{\Box}="31";(-9,4.3)*{\Box}="41";
	(6.2,-7.8)*{\bullet}="71";(-2.2, -9.75)*{\bullet}="61";(-9,-4.3)*{\bullet}="51";
	(15,0)*{\circ}="12";(9.3,11.7)*{\circ}="22";(-3.3,14.6)*{\Box}="32";(-13.5,6.5)*{\Box}="42";
	(9.3,-11.7)*{\bullet}="72";(-3.3, -14.6)*{\bullet}="62";(-13.5,-6.5)*{\Box}="52";
	"11";"21"**\dir{-};"21";"31"**\dir{-};"31";"41"**\dir{-};"41";"51"**\dir{-};
	"51";"61"**\dir{-};"61";"71"**\dir{-};"71";"11"**\dir{-};
	"12";"22"**\dir{-};"22";"32"**\dir{-};"32";"42"**\dir{-};"42";"52"**\dir{-};
	"52";"62"**\dir{-};"62";"72"**\dir{-};"72";"12"**\dir{-};
	"11";"12"**\dir{-};"21";"22"**\dir{-};"31";"32"**\dir{-};"41";"42"**\dir{-};
	"51";"52"**\dir{-};"61";"62"**\dir{-};"71";"72"**\dir{-};
	"12"+(0,6)*{Q_0};
	"42"+(-4,2)*{Q_1};
	"62"+(8,-1)*{P_2};
	\endxy\]
	\caption{A graph $P$ with $r(P)=\text{diam}(P)=4$ and $\text{cov}_{AA''B''}(P)=2$}\label{fig:hept*2}
	\end{figure}
\end{proof}

\section{Appendage Numbers}\label{sec:Append}
In this section we determine $A_{ucg}(C, P)$ for most pairs of graphs $(C,P)$ based on the structure of $C$.  Conjectures are also given for the two remaining cases.
\begin{thm}\label{prop:Appendv}
	Let $C=\{v\}$ and $P$ be any graph with $r(P)\geq 2$.  Then $A_{ucg}(C,P)=0$.
\end{thm}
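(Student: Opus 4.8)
The plan is to exhibit a concrete uniform central graph $H$ with $\langle \ZZ(H)\rangle = C$, $\langle \CP(H)\rangle = P$, and \emph{no} intermediate vertices at all; since $|\II(H)|\geq 0$ always, producing such an $H$ immediately forces $A_{ucg}(C,P)=0$. Note that the construction $\GG(C,P,\overline{P},\rho)$ of Proposition \ref{G0UCG} is of no use here, since it always introduces at least the vertices $x_{i,1}$, whereas we need $V(H)=\{v\}\cup V(P)$ exactly. So I would give a direct construction instead.

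First I would define $H$ on the vertex set $\{v\}\cup V(P)$ by declaring its induced subgraph on $V(P)$ to be $P$ and joining $v$ to every vertex of $P$. Because the only edges added to $P$ are those incident to $v$, for $p,p'\in V(P)$ we have $d_H(p,p')=1$ when $pp'\in E(P)$ and $d_H(p,p')=2$ otherwise, the latter realized by the path $p\,v\,p'$; in particular $\text{diam}(H)\le 2$.

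The key computation is with eccentricities. Clearly $e_H(v)=1$, since $v$ is adjacent to all of $V(P)$. For each $p\in V(P)$ the hypothesis $r(P)\geq 2$ gives $e_P(p)\geq 2$, so $p$ is \emph{not} adjacent in $P$ to every other vertex; choosing a non-neighbour $p'$ then yields $d_H(p,p')=2$, hence $e_H(p)\geq 2$. Therefore $v$ is the unique vertex of minimum eccentricity, so $r(H)=1$ and $\ZZ(H)=\{v\}$, giving $\langle \ZZ(H)\rangle = C$.

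Finally I would identify the centered periphery. Since $\ZZ(H)=\{v\}$, we have $\CP(H)=EC(v)$, and every $p\in V(P)$ satisfies $d_H(v,p)=1=e_H(v)$, so $EC(v)=V(P)$ and $\langle \CP(H)\rangle = P$. As $H$ has a single central vertex, the defining condition $EC(c)=\CP(H)$ for all $c\in\ZZ(H)$ holds vacuously, so $H$ is a UCG. Then $\II(H)=V(H)-(\ZZ(H)\cup\CP(H))=\emptyset$, whence $A_{ucg}(C,P)\le 0$ and thus $A_{ucg}(C,P)=0$. The only point requiring care is the bound $e_H(p)\geq 2$, which is precisely where $r(P)\geq 2$ is used; every other step is immediate from the construction.
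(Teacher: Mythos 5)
Your proof is correct and follows essentially the same approach as the paper: both construct the cone over $P$ (join $v$ to every vertex of $P$), show $e(v)=1$ while $r(P)\geq 2$ forces every $p\in V(P)$ to have a non-neighbour and hence $e(p)\geq 2$, and conclude $\ZZ(H)=\{v\}$, $\CP(H)=EC(v)=V(P)$, $\II(H)=\emptyset$. Your write-up is merely a bit more explicit about the vacuous uniformity condition and the identification of the centered periphery.
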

\begin{proof}
	Let $H$ be the graph with vertex set
	\[V(H)=V(P)\cup\{v\}\]
	and edge set
	\[E(H)=E(P)\cup \{vp|p\in V(P)\}.\]
	We claim $H$ is a UCG with $\langle\ZZ(H)\rangle=C$, $\langle\CP(H)\rangle=P$ and $|\II(H)|=0$.
	
	Since $v$ is adjacent to all other vertices in $H$, $e(v)=1$.  If $p$ is a vertex in $P$
	there is a $p'\in V(P)$ satisfying $d_P(p,p')=2$. Since $p$ and  $p'$ are not adjacent in $P$, they are not adjacent in $H$ and $e(p)\geq 2$. This implies  $\la\ZZ(H)\ra=\{v\}$ and $H$ is UCG with $\la\CP(H)\ra=P$.
\end{proof}

\begin{thm}\label{thm:KnAppend}
	When  $n\geq 2$
	\[A_{ucg}(K_n ,P)=\begin{cases}
	2 & \text{if } \text{diam}(P)\geq 4\text{ and } r(P)\geq 3\\
	3 & \text{if } \text{diam}(P)\geq 3\text{ and } r(P)=2\\
	\end{cases}\]
	Furthermore, for all $t\in \Nb$ there is a graph $P$ with $\text{diam}(P)=r(P)=2$ and $A_{ucg}(K_n, P)\geq t$.
\end{thm}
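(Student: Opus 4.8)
The plan is to assemble this theorem directly from the structural results already established, treating the two equality cases and the final unbounded statement separately. Throughout I set $\kappa = \text{cov}_A(P)$ and lean on two ingredients: the two-sided bound $\kappa \leq A_{ucg}(K_n, P) \leq \kappa+1$ of Proposition \ref{prop:Knbound}, and the dichotomy of Theorem \ref{thm:Knrange}, namely that $A_{ucg}(K_n, P) = \kappa$ holds precisely when $\text{cov}_{AB}(P) = \kappa$. The entire argument reduces to computing $\kappa$ in each regime and deciding whether $\text{cov}_{AB}(P)$ meets it.

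For the first case, where $\text{diam}(P) \geq 4$ and $r(P) \geq 3$, I would first invoke Proposition \ref{d>3} (applicable since $\text{diam}(P) \geq 3$) to get $\kappa = 2$, and then Proposition \ref{d>4r>3} to get $\text{cov}_{AB}(P) = 2 = \kappa$. Theorem \ref{thm:Knrange} then yields $A_{ucg}(K_n, P) = \kappa = 2$ immediately.

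For the second case, where $\text{diam}(P) \geq 3$ and $r(P) = 2$, Proposition \ref{d>3} again gives $\kappa = 2$. Here Proposition \ref{r=2} provides $\text{cov}_{AB}(P) \neq 2$; combined with the trivial inequality $\text{cov}_{AB}(P) \geq \text{cov}_A(P) = 2$ (any covering satisfying both A and B in particular satisfies A, so the minimum over the smaller class dominates), this forces $\text{cov}_{AB}(P) > 2 = \kappa$. Theorem \ref{thm:Knrange} then rules out $A_{ucg}(K_n, P) = \kappa$, so the upper bound of Proposition \ref{prop:Knbound} pins $A_{ucg}(K_n, P) = \kappa + 1 = 3$.

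For the unbounded statement I would appeal to Proposition \ref{d=r=2}: given $t$, choose $\alpha$ with $2\alpha \geq t$ and take the graph $P^\alpha$, which has $r(P^\alpha) = \text{diam}(P^\alpha) = 2$ and $\text{cov}_A(P^\alpha) = 2\alpha$. The lower bound of Proposition \ref{prop:Knbound} then gives $A_{ucg}(K_n, P^\alpha) \geq 2\alpha \geq t$. Since each step is a citation, there is no genuine obstacle here; the only point that demands explicit care is the inequality $\text{cov}_{AB} \geq \text{cov}_A$ used in the second case, which is what upgrades the bare nonequality "$\text{cov}_{AB} \neq \kappa$" into "$\text{cov}_{AB} > \kappa$" and hence lands $A_{ucg}$ at $\kappa+1$ rather than leaving it merely distinct from $\kappa$.
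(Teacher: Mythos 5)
Your proposal is correct and matches the paper's proof, which cites exactly the same chain of results (Proposition \ref{prop:Knbound}, Theorem \ref{thm:Knrange}, and Propositions \ref{d>3}, \ref{d>4r>3}, \ref{r=2}, \ref{d=r=2}); you have merely written out the details the paper leaves implicit. One minor remark: the inequality $\text{cov}_{AB}(P)\geq \text{cov}_A(P)$ you single out is not actually needed, since the ``if and only if'' in Theorem \ref{thm:Knrange} already converts $\text{cov}_{AB}(P)\neq\kappa$ into $A_{ucg}(K_n,P)\neq\kappa$, and Proposition \ref{prop:Knbound} then forces the value $\kappa+1$.
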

\begin{proof}
	This follows directly from proposition \ref{prop:Knbound}, theorem \ref{thm:Knrange} and propositions \ref{d>3}, \ref{d>4r>3}, \ref{r=2}, and \ref{d=r=2}.
\end{proof}

\begin{conj}
	If $\text{diam}(P)=r(P)=3$, then $A_{ucg}(K_n, P)=2$.
\end{conj}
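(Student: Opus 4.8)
The plan is to prove the conjecture by establishing the machinery needed to pin down $A_{ucg}(K_n,P)$ in the one case that Theorem~\ref{thm:KnAppend} leaves open, namely $\text{diam}(P)=r(P)=3$. By Proposition~\ref{prop:Knbound} and Theorem~\ref{thm:Knrange}, it suffices to show that $\text{cov}_A(P)=\text{cov}_{AB}(P)=2$ whenever $\text{diam}(P)=r(P)=3$. The lower bound $\text{cov}_A(P)\geq 2$ is automatic for any $P$ with $r(P)>1$, and Proposition~\ref{d>3} already gives $\text{cov}_A(P)=2$ because $\text{diam}(P)\geq 3$. So the entire content of the conjecture reduces to the single claim that $\text{cov}_{AB}(P)=2$, i.e. that one can always build a covering $\{P_1,P_2\}$ of $P$ satisfying both condition A and condition B.

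The natural approach mirrors the construction in Proposition~\ref{d>4r>3}, but that proof used $\text{diam}(P)\geq 4$ in an essential way to seed $P_1$ and $P_2$ with two closed neighborhoods $N[x]$ and $N[y]$ at distance $d(x,y)=4$, guaranteeing $d(x,N[y])=3$. With $\text{diam}(P)=3$ the furthest apart we can place seeds is $d(x,y)=3$, so $d(x,N[y])=2$ only, and condition B (which demands a witness at distance $\geq 3$ or a separating set at distance $\geq 2$) is no longer handed to us for free. First I would fix $x,y$ with $d(x,y)=3$ and initialize $P_1:=N[x]$, $P_2:=N[y]$; here $N[x]\cap N[y]=\emptyset$ and $d(x,P_2)=2$, $d(y,P_1)=2$, which already secures condition A for the seeds. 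Then I would process each remaining vertex $z\in V(P)-(P_1\cup P_2)$ by a recursive rule analogous to the three-case split in Proposition~\ref{d>4r>3}, the crucial difference being that I must verify condition B rather than inheriting it from a distance-4 pair. The key structural fact to exploit is $r(P)=3$: every vertex has eccentricity exactly $3$ (since $r=\text{diam}=3$), so for \emph{every} $z$ there is some $w$ with $d(z,w)=3$, and this $w$ cannot lie in both $N[x]$ and $N[y]$ simultaneously in a way that kills condition B.

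The main obstacle will be the verification of condition B for vertices assigned in the ``balanced'' third case, where $z$ is close to both current parts. Concretely, the danger is a vertex $p$ that ends up in $P_1$ with $d(p,P_2)=1$ and with no other part to witness a distance-$2$ separation, and no vertex at distance $\geq 3$ outside $P_1$ — exactly the failure mode identified in Proposition~\ref{r=2}, which is why $r(P)=2$ is genuinely different. The argument must show this cannot happen when $r(P)=3$: since $e(p)=3$, there is a witness $w$ with $d(p,w)=3$, hence $w\notin P_2$ would force $w$ into $P_1$ and give condition B-\ref{d3}, while $w\in P_2$ is impossible because $d(p,P_2)=1$ and $d(p,w)=3$ would contradict $\text{diam}(P)=3$ unless $w$ is at distance $\geq 2$ from some other vertex of $P_1$'s complement. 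I would therefore design the recursive assignment so that the eccentricity-$3$ witness of each newly added vertex is always either placed in the opposite part or leaves a distance-$2$ gap, and then confirm that condition A is preserved throughout (as in Proposition~\ref{d>4r>3}, since $N[x]\subset P_1$ and $N[y]\subset P_2$ keep $d(x,P_2)\geq 2$ and $d(y,P_1)\geq 2$ for the whole process). If the recursion can be shown to terminate with both conditions intact, then $\text{cov}_{AB}(P)=2$, and the conjecture follows from Theorem~\ref{thm:Knrange}; the delicate point, and the reason this remains a conjecture rather than a theorem, is that producing a \emph{uniform} rule that always maintains condition B under $\text{diam}=r=3$ may require case analysis that does not obviously close, and there may be sporadic graphs where every size-$2$ covering fails condition B.
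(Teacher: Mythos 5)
This statement is one of the paper's open conjectures, not a theorem: the authors explicitly state that all their examples satisfy $A_{ucg}(K_n,P)=2$ but that they ``have not been able to prove this is always true.'' So there is no paper proof to compare against, and your proposal does not close the gap either. Your reduction is correct and is exactly the paper's own framework: by Proposition~\ref{d>3} we have $\text{cov}_A(P)=2$ since $\text{diam}(P)\geq 3$, and then Proposition~\ref{prop:Knbound} together with Theorem~\ref{thm:Knrange} shows the conjecture is equivalent to the claim that every graph with $\text{diam}(P)=r(P)=3$ admits a two-element covering satisfying conditions A and B. Everything after that, however, is a plan rather than a proof, and the plan has a genuine hole.

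The hole is the heart of the matter: you never specify the recursive assignment rule, and the local argument you sketch for condition B does not hold up. The step ``$w\in P_2$ is impossible because $d(p,P_2)=1$ and $d(p,w)=3$ would contradict $\text{diam}(P)=3$'' is not a contradiction at all --- $P_2$ can simultaneously contain a vertex at distance $1$ from $p$ and a vertex at distance $3$ from $p$; indeed a witness $w\in P_2\setminus P_1$ with $d(p,w)=3$ is precisely what condition B-1 wants, so nothing is ``impossible'' there. The true difficulty is global, not local: every vertex has eccentricity exactly $3$, so every vertex has distance-$3$ witnesses, but you must choose sides for all vertices \emph{simultaneously} so that each vertex of $P_1$ gets a witness placed in $P_2\setminus P_1$ (or a distance-$2$ gap) and vice versa, while also preserving condition A. This is a global constraint-satisfaction problem in which fixing condition B for one vertex can destroy it for another, and unlike Proposition~\ref{d>4r>3} there is no distance-$4$ seed pair $N[x]$, $N[y]$ to anchor the recursion --- with seeds at distance $3$ you only get $d(x,N[y])=2$, so even condition A for later-assigned vertices is not automatic. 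Proposition~\ref{r=2} shows this kind of obstruction is real when $r(P)=2$; your proposal gives no mechanism ruling out an analogous global obstruction at $r(P)=3$, and you concede as much in your final sentence. As it stands, the proposal is an honest restatement of why the problem is hard, not a proof.
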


When $\text{diam}(P)=r(P)=3$,  $A_{ucg}(C, P)=2$ or $3$.  However, all our examples show $A_{ucg}(K_n, P)=2$, but we have not been able to prove this is always true.

\begin{thm}\label{thm:CAppend}
	If $C$ is a graph with $\text{diam}(C)>1$ then
	\[A_{ucg}(C ,P)=\begin{cases}
	4 & \text{if } \text{diam}(P)=\infty\\
	5  & \text{if } 5\leq \text{diam}(P) < \infty\\
	5 \text{ or } 6  & \text{if } \text{diam}(P)=4 \text{ and } r(P)= 4 \\
	6 & \text{if } \text{diam}(P)=4 \text{ and } r(P)=2\\
	6 & \text{if }  \text{diam}(P) =3\\
	\end{cases}\]
	Furthermore, for all $t\in \Zb$ there is a graph $P$ with $\text{diam}(P)=r(P)=2$ and $A_{ucg}(C, P)\geq t$.
\end{thm}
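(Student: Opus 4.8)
The plan is to read off every entry of the table from Theorem~\ref{thm:CRange} together with the covering-size computations of Section~\ref{sec:cov}. The unifying observation is that in each row of the table $\text{diam}(P)\geq 3$ (with $\infty$ included), so Proposition~\ref{d>3} gives $\kappa:=\text{cov}_A(P)=2$; since $\text{diam}(P)\geq 3$ forces $r(P)\geq 2$ in the finite cases and $r(P)=\infty$ when $\text{diam}(P)=\infty$, we always have $r(P)>1$ and Theorem~\ref{thm:CRange} applies. That theorem confines $A_{ucg}(C,P)$ to $\{2\kappa,2\kappa+1,2\kappa+2\}=\{4,5,6\}$ and reduces the choice to three yes/no questions: does $\text{cov}_{A'B'}(P)$, $\text{cov}_{A'}(P)$, or $\text{cov}_{AA''B''}(P)$ equal $2$? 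Each question is answered by a single proposition from Section~\ref{sec:cov}, so the whole theorem becomes a case table.

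First I would settle the rows with a forced value. If $\text{diam}(P)=\infty$ then $P$ is disconnected, so Proposition~\ref{d>3'} gives $\text{cov}_{A'B'}(P)=2=\kappa$ and the first case of Theorem~\ref{thm:CRange} yields $A_{ucg}(C,P)=4$. If $5\leq\text{diam}(P)<\infty$ then $P$ is connected, so Proposition~\ref{d>3'} gives $\text{cov}_{A'B'}(P)\neq\kappa$, while Proposition~\ref{d>5'} gives $\text{cov}_{A'}(P)=2=\kappa$; the second case of Theorem~\ref{thm:CRange} then forces $A_{ucg}(C,P)=5$. For the rows $\text{diam}(P)=3$ and ($\text{diam}(P)=4$, $r(P)=2$), connectivity again rules out $\text{cov}_{A'B'}(P)=\kappa$, Proposition~\ref{d>5'} rules out $\text{cov}_{A'}(P)=\kappa$ because $\text{diam}(P)<5$, and the last question is answered negatively by Proposition~\ref{d=3''} when $\text{diam}(P)\leq 3$ and by Proposition~\ref{r=2''} when $r(P)=2$; since none of the three coverings attains $\kappa$, the third case of Theorem~\ref{thm:CRange} gives $A_{ucg}(C,P)=6$.

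The only row whose value is not forced is $\text{diam}(P)=r(P)=4$. Here connectivity and $\text{diam}(P)<5$ still eliminate $\text{cov}_{A'B'}(P)=\kappa$ and $\text{cov}_{A'}(P)=\kappa$, so Theorem~\ref{thm:CRange} says $A_{ucg}(C,P)=5$ exactly when $\text{cov}_{AA''B''}(P)=2$ and $A_{ucg}(C,P)=6$ otherwise. Both outcomes occur: the hexagonal prism of Proposition~\ref{r4d4<>2} has $\text{cov}_{AA''B''}(P)\neq 2$ (value $6$) and the heptagonal prism of Proposition~\ref{r4d4=2} has $\text{cov}_{AA''B''}(P)=2$ (value $5$), so the entry can only be narrowed to $\{5,6\}$. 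For the final ``furthermore'' assertion I would, given $t$, take $\alpha=\lceil t/4\rceil$ and the graph $P^{\alpha}$ of Proposition~\ref{d=r=2}, which satisfies $r(P^{\alpha})=\text{diam}(P^{\alpha})=2$ and $\text{cov}_A(P^{\alpha})=2\alpha$; the lower bound of Proposition~\ref{prop:Cbound} then gives $A_{ucg}(C,P^{\alpha})\geq 2\cdot 2\alpha=4\alpha\geq t$.

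Almost all of the work lives in the earlier propositions, so no individual step here is difficult; the proof is an assembly. The main points demanding care are purely organizational: checking that the listed rows are mutually exclusive and cover all $(r(P),\text{diam}(P))$ pairs for which a value is claimed --- in particular confirming that $\text{diam}(P)=4$, $r(P)=3$ is deliberately left out and belongs to the conjectures --- and, in the $\text{diam}(P)=r(P)=4$ row, correctly matching the two prism examples of Propositions~\ref{r4d4<>2} and~\ref{r4d4=2} to the values $6$ and $5$ respectively. The genuine mathematical obstacle, namely exhibiting graphs realizing both $5$ and $6$ in that row, has already been isolated into those two propositions, so the present theorem only needs to invoke them.
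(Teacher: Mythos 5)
Your proposal is correct and takes essentially the same route as the paper, whose entire proof is the citation of exactly the ingredients you assembled: Theorem~\ref{thm:CRange} with $\kappa=2$ from Proposition~\ref{d>3}, the covering computations in Propositions~\ref{d>3'}, \ref{d>5'}, \ref{d=3''}, \ref{r=2''}, the two prism examples of Propositions~\ref{r4d4<>2} and~\ref{r4d4=2}, and Proposition~\ref{d=r=2} for the final claim. The only nitpick is the last step: $P^{\alpha}$ satisfies $\text{diam}(P^{\alpha})=r(P^{\alpha})=2$ only when $\alpha\geq 2$ (for $\alpha=1$ it is two isolated vertices), so you should take $\alpha=\max\{2,\lceil t/4\rceil\}$ rather than $\lceil t/4\rceil$, which degenerates for small or nonpositive $t$.
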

\begin{proof}
	This follows directly from theorem \ref{thm:CRange}, proposition  \ref{d>3} and propositions  \ref{d=r=2}, through \ref{r4d4=2}.
\end{proof}

The only case not accounted for is when $\text{diam}(P)=4$ and $r(P)=3$.   In this case we know $\text{cov}_{A'B'}(P)\neq 2$, so $A_{ucg}(C, P)= 5$ or $6$.  We also know $\text{cov}_{A'}(P)\neq 2$.  Therefore $A_{ucg}(C, P)= 5$ if and only if $\text{cov}_{AA''B''}(P)= 2$.  However, we have not found this to be the case for any such $P$.  We also have been unable to show it is impossible, so we are left with the following conjecture.

\begin{conj}
	If $C$ is a graph with $\text{diam}(C)>1$  and $P$ is a graph with $\text{diam}(P)=4$ and $r(P)=3$, then $A_{ucg}(C, P)=6$.
\end{conj}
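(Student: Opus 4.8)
The plan is to reduce the conjecture to a purely covering-theoretic statement and then attack that statement geometrically. As already observed in the discussion preceding the conjecture, for $\text{diam}(P)=4$ and $r(P)=3$ we have $\kappa=\text{cov}_A(P)=2$ by proposition \ref{d>3}; since $P$ is connected ($\text{diam}(P)<\infty$) proposition \ref{d>3'} gives $\text{cov}_{A'B'}(P)\neq 2$, and since $\text{diam}(P)<5$ proposition \ref{d>5'} gives $\text{cov}_{A'}(P)\neq 2$. Feeding these into theorem \ref{thm:CRange}, the only surviving alternatives are $A_{ucg}(C,P)=5$ (exactly when $\text{cov}_{AA''B''}(P)=2$) and $A_{ucg}(C,P)=6$ (otherwise). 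Thus the conjecture is \emph{equivalent} to the single assertion that $\text{cov}_{AA''B''}(P)\neq 2$ whenever $\text{diam}(P)=4$ and $r(P)=3$, and this is the statement I would prove.

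To prove $\text{cov}_{AA''B''}(P)\neq 2$ I would argue by contradiction and invoke lemma \ref{lem:2balls}: if $\text{cov}_{AA''B''}(P)=2$ then, because $\text{diam}(P)=4$, there exist vertices $x_1,x_2,x_3$ with $N_2[x_1]\cap N_2[x_2]\cap N_2[x_3]=\emptyset$. The goal is then to show this configuration is incompatible with $r(P)=3$. The natural device is a central vertex $c$ with $e(c)=3$, so that $N_3[c]=V(P)$. For each $i$ choose a shortest $c$--$x_i$ path; when $d(c,x_i)\le 2$ the vertex $c$ itself lies in $N_2[x_i]$, and when $d(c,x_i)=3$ the vertex two steps before $x_i$ lies in $N[c]\cap N_2[x_i]$. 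This produces, for each $i$, a witness in $N[c]$ lying in $N_2[x_i]$; I would then try to promote these three (possibly distinct) witnesses to a single common vertex, contradicting the empty intersection.

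The delicate step is exactly this promotion. When the three witnesses coincide, or when any neighbor of $c$ is simultaneously within distance $2$ of all three $x_i$, we are done; the hard case is when the three shortest paths leave $c$ through distinct neighbors that are pairwise far from the other two targets. To control this case I would not use lemma \ref{lem:2balls} as a black box but rather re-run its proof to extract the sharper data it produces: vertices $p_0,p_1\in P_2$ and $q\in Q_0$ with $d(Q_0,p_0)\ge 3$, $d(Q_1,p_1)\ge 3$ and $d(q,P_2)\ge 3$, and then combine these distance inequalities with $N_3[c]=V(P)$ and $\text{diam}(P)=4$ to force a coincidence. In parallel I would develop a direct fallback mirroring propositions \ref{d=3''} and \ref{r=2''}: assume a $2$-covering $\{P_1,P_2\}$ with refinement $\overline{Q}=\{Q_0,Q_1,P_2\}$, locate a central vertex $c$, split on whether $c\in P_2$ or $c\in P_1$, and show that conditions A, A$''$, B$''$ force some intermediate vertex of a shortest $c$-to-periphery path to violate one of the clauses.

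The main obstacle is that the arguments of propositions \ref{d=3''} and \ref{r=2''} both relied on a feature absent here: for $\text{diam}(P)\le 3$ the clauses of B$''$ demanding a vertex at distance $\ge 4$ are vacuous, and for $r(P)=2$ every eccentricity is so small that a single short path pins everything down. With $\text{diam}(P)=4$ the clauses B$''$-\ref{B'':1d}, B$''$-\ref{B'':2b} and A$''$-\ref{A'':2a} genuinely survive, since distance-$4$ pairs exist, so a $2$-covering has real freedom to meet the conditions; ruling this out is precisely the content of the conjecture. Concretely, the crux is to prove the clean geometric claim that in a graph with $r(P)=3$ and $\text{diam}(P)=4$ no three closed $2$-balls have empty common intersection (equivalently, three sets of the form $\{v:d(x_i,v)\ge 3\}$ cannot cover $V(P)$) --- a statement one expects to hold in the radius-$3$ regime but which fails at radius $4$, as the heptagonal prism of proposition \ref{r4d4=2} shows. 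Deciding this claim is the heart of the matter and the reason the case has resisted the techniques of section \ref{sec:cov}.
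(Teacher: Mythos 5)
This statement is a \emph{conjecture} in the paper: the authors explicitly state that they could neither verify $\text{cov}_{AA''B''}(P)=2$ for any $P$ with $\text{diam}(P)=4$, $r(P)=3$, nor rule it out, so there is no proof in the paper to compare against. Your opening reduction is correct and is exactly the paper's own discussion preceding the conjecture: by proposition \ref{d>3}, $\kappa=\text{cov}_A(P)=2$; by proposition \ref{d>3'}, $\text{cov}_{A'B'}(P)\neq 2$ since $P$ is connected; by proposition \ref{d>5'}, $\text{cov}_{A'}(P)\neq 2$; and theorem \ref{thm:CRange} then leaves only $A_{ucg}(C,P)=5$ (iff $\text{cov}_{AA''B''}(P)=2$) or $6$. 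But everything after that is a plan rather than a proof, and you say so yourself: the ``promotion'' of the three witnesses in $N[c]$ to a common vertex is never carried out, and the fallback mirroring propositions \ref{d=3''} and \ref{r=2''} is only sketched. So the proposal does not establish the statement.

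Worse, the one concrete claim you isolate as ``the heart of the matter'' --- that in a graph with $r(P)=3$ and $\text{diam}(P)=4$ no three closed $2$-balls have empty common intersection --- is \emph{false}. Let $P$ be the $7$-cycle $v_0v_1\cdots v_6$ together with a pendant vertex $p$ adjacent to $v_0$. Then $e(v_0)=3$ so $r(P)=3$, and $d(p,v_3)=4$ so $\text{diam}(P)=4$; but $N_2[p]=\{p,v_0,v_1,v_6\}$, $N_2[v_3]=\{v_1,v_2,v_3,v_4,v_5\}$, and $N_2[v_4]=\{v_2,v_3,v_4,v_5,v_6\}$, whence
\[N_2[p]\cap N_2[v_3]\cap N_2[v_4]=\emptyset.\]
Consequently lemma \ref{lem:2balls} cannot decide the conjecture: its conclusion is only a \emph{necessary} condition for $\text{cov}_{AA''B''}(P)=2$, and that condition is actually satisfied by graphs in the regime $r=3$, $\text{diam}=4$. (This is consistent with the fact that the converse of lemma \ref{lem:2balls} is not claimed anywhere --- compare proposition \ref{r4d4=2}, where the paper must exhibit an explicit covering, not merely a triple of $2$-balls.) Any successful argument must therefore engage the full strength of conditions A, A$''$ and B$''$ for a putative covering $\{Q_0, Q_1, P_2\}$ --- for instance showing directly that no such covering of $C_7$-plus-pendant, and more generally of any $P$ with $r(P)=3$ and $\text{diam}(P)=4$, can satisfy them --- rather than routing through the empty-intersection obstruction, which your own hard case (three shortest paths leaving $c$ through distinct neighbors) realizes in this very example.
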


The  $\text{diam}(P)=r(P)=4$ case also warrants further discussion.  Propositions \ref{r4d4<>2} and \ref{r4d4=2} show there are examples of $P$ when $A_{ucg}(C, P)=6$ and with $A_{ucg}(C, P)=5$. It will be necessary to find another metric invariant other than diameter and radius to refine the results of this case.  At this point we are unsure what a suitable invariant may be.

Finally, independent of $C$, there is a major difference between possible appendage numbers when  $\text{diam}(P)>2$ and when  $\text{diam}(P)=2$. When $\text{diam}(P)>2$, theorems \ref{thm:KnAppend} and \ref{thm:CAppend} show there are only finitely many possible appendage numbers, and which are independent of the size of $V(P)$.  On the other hand, for $\text{diam}(P)=r(P)=2$ the graph $P^\alpha$ in proposition \ref{d=r=2} gives \[A_{ucg}(K_n, P^\alpha) = |V(P^\alpha)| = 2\alpha.\]
This may suggest that appendage numbers are related to $|V(P)|$ when $\text{diam}(P)=r(P)=2$, however the following proposition shows this is not the case.

%	\[Q(P)=  \frac{A_{UCG}(K_n, P)}{|P|}\]
%	Since $\frac{A_{UCG}(K_n, P)}\leq \text{cov}_A(P)\leq |P|$ we know $Q(P)\leq 1$, but as proposition \ref{Q(P)} will show we can have arbitrary $Q(P)$ with arbitrarily large appendage numbers.

\begin{prop}\label{prop:2a+b}
	For every  $\alpha, \beta\in \Nb$ there is a graph $P$ such that $V(P)=2\alpha+\beta $ and $A_{ucg}(K_n, P)=2\alpha$.
\end{prop}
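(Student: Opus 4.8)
The plan is to modify the graph $P^\alpha$ of proposition \ref{d=r=2} by blowing up one vertex into a clique of mutually indistinguishable vertices, inflating the vertex count by $\beta$ while keeping the smallest condition A covering at size $2\alpha$. Concretely, start from $P^\alpha$ with vertex set $\{e_i, f_i : 1\leq i\leq \alpha\}$ (so $\alpha\geq 2$) and form $P$ by adjoining $\beta$ new vertices $w_1, \ldots, w_\beta$: declare $\{e_1, w_1, \ldots, w_\beta\}$ to be a clique, make each $w_l$ adjacent to exactly the neighbors of $e_1$ (that is, to every $e_j, f_j$ with $j\neq 1$), and leave each $w_l$ non-adjacent to $f_1$. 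Then $w_1, \ldots, w_\beta$ are true twins of $e_1$ and $|V(P)| = 2\alpha + \beta$.

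First I would record the metric structure. Each of $e_1, w_1, \ldots, w_\beta$ has $f_1$ as its unique non-neighbor; each $e_i$ (resp.\ $f_i$) with $i\geq 2$ has $f_i$ (resp.\ $e_i$) as its unique non-neighbor; and $f_1$ is non-adjacent precisely to $e_1, w_1, \ldots, w_\beta$. Every non-adjacent pair shares a common neighbor (here $\alpha\geq 2$ is used), so $P$ is connected with $r(P)=\text{diam}(P)=2$. For the upper bound I would exhibit the covering $\overline{P}$ consisting of the single block $\{e_1, w_1, \ldots, w_\beta\}$ together with the singletons $\{f_1\}$ and $\{e_i\}, \{f_i\}$ for $2\leq i\leq \alpha$; this has size $2\alpha$. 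Condition A holds because $f_1$ is non-adjacent to the entire block while each singleton has its unique non-neighbor available as a witness. Since $\text{diam}(P)=2$, condition B reduces to its second clause, and these same witnesses show each vertex is at distance $\geq 2$ from some other block; in particular every member of the block is non-adjacent to $\{f_1\}$. Hence $\text{cov}_{AB}(P)\leq 2\alpha$.

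The crux is the matching lower bound $\text{cov}_A(P)\geq 2\alpha$, which I would obtain from the uniqueness of non-neighbors. Passing to the complement $\overline{P}$, condition A says each block must lie in the common-neighbor set, computed in $\overline{P}$, of some vertex outside it. For $i\geq 2$ the only $\overline{P}$-neighbor of $e_i$ is $f_i$, whose only $\overline{P}$-neighbor is in turn $e_i$; so any condition A block meeting $e_i$ must equal $\{e_i\}$, and likewise for each $f_i$ with $i\geq 2$ and for $f_1$. This forces $2\alpha-1$ distinct singleton blocks covering $\{f_1\}\cup\{e_i, f_i : i\geq 2\}$, leaving $e_1, w_1, \ldots, w_\beta$ to require at least one further block, so $\text{cov}_A(P)\geq 2\alpha$. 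Combining the bounds with $\text{cov}_{AB}(P)\geq \text{cov}_A(P)$ gives $\text{cov}_{AB}(P)=\text{cov}_A(P)=2\alpha$, and theorem \ref{thm:Knrange} then yields $A_{ucg}(K_n, P)=2\alpha$.

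The main obstacle is precisely this lower bound: one must verify that the twin block is the only block that can be nontrivial, which is exactly where the ``unique non-neighbor'' feature of the construction is used, and one must separately confirm that merging the twins into a single block does not break condition B (so that $\text{cov}_A$ and $\text{cov}_{AB}$ agree). Both reduce to the routine but essential observation that, apart from $f_1$, every vertex of $P$ has a single non-neighbor, so the complement $\overline{P}$ is a star on $\{f_1; e_1, w_1,\dots,w_\beta\}$ together with $\alpha-1$ disjoint edges.
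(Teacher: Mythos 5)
Your proposal is correct and takes essentially the same route as the paper: augment $P^\alpha$ from proposition \ref{d=r=2} with $\beta$ new vertices engineered to share a single covering block with $e_1$ or $f_1$, force every other block to be a singleton (via unique non-neighbors), conclude $\text{cov}_A(P)=\text{cov}_{AB}(P)=2\alpha$, and invoke theorem \ref{thm:Knrange}. The only differences are cosmetic: the paper's new vertices are adjacent to everything except $e_1$, $f_1$, and each other (so they merge into $f_1$'s block), whereas yours are true twins of $e_1$ (merging into $e_1$'s block), and you phrase the forcing argument in the complement graph rather than directly.
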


\begin{proof}
	In this proof we modify the construction of $P^\alpha$ from proposition \ref{d=r=2}.  For $\alpha, \beta\in \Nb$ define $P^{\alpha, \beta}=(V^{\alpha,\beta}, E^{\alpha,\beta})$ as follows.  Let \[V^{\alpha, \beta}=\{e_i, f_i, g_j:1\leq i\leq \alpha, 1\leq j\leq \beta\}\]
	and  \[E^{\alpha, \beta}=\{ e_ie_j, f_if_j, e_if_j, e_kg_l, f_kg_l : 1\leq i, j\leq \alpha, i\neq j,  2\leq k\leq \alpha,  1\leq l\leq \beta \}.\]
	That is, $P^{\alpha, \beta}$ is the graph from proposition \ref{d=r=2} with $\beta$ new vertices, $g_i's$, that are adjacent to every vertex except themselves, $e_1$ and $f_1$. Observe that $\text{diam}(P^{\alpha, \beta})=r(P^{\alpha, \beta})=2$ if $\alpha \geq 2$.
	
	We now show $\text{cov}_{A}(P^{\alpha, \beta})=\text{cov}_{AB}(P^{\alpha,\beta})=2\alpha$.  Let $\overline{P^{\alpha, \beta}}=\{P_1, \ldots, P_k\}$ be a covering of $P^{\alpha, \beta}$ satisfying condition A.  For $2\leq i \leq \alpha$,  $\{e_i\}, \{f_i\}\in \overline{P^{\alpha, \beta}}$  as in proposition \ref{d=r=2}.  If \[\{e_1, f_1, g_1, \ldots, g_{\beta}\}\in \overline{P^{\alpha, \beta}},\]  then $\overline{P^{\alpha, \beta}}$ fails to satisfy condition A. Therefore $e_1$, $f_1$ and the $g_l$s must be contained in least two elements of $\overline{P^{\alpha, \beta}}$, and so $\text{cov}_{A}(P^{\alpha, \beta})\geq 2\alpha$.
	
	Let  $P^{\alpha, \beta}_{1}=\{e_1\}$ and $Q^{\alpha, \beta}_{1}=\{f_1, g_1 \ldots, g_\beta\}$, and for $2\leq i \leq \alpha$ let $P^{\alpha, \beta}_i=\{e_i\}$ and $Q^{\alpha, \beta}_{i}=\{f_i\}$.  Then $\{P^{\alpha, \beta}_1, Q^{\alpha, \beta}_1, \ldots, P^{\alpha, \beta}_\alpha, Q^{\alpha, \beta}_\alpha\}$ is a covering of $P^{\alpha, \beta}$ that satisfies both conditions A and B.  So $A_{ucg}(K_n, P^{\alpha, \beta})=2\alpha$ and $V(P^{\alpha, \beta})=2\alpha+\beta $.
	
\end{proof}

\section{Other Appendage Numbers}\label{sec:Gu}
In the paper \cite{Gu} Gu defines $A_{ucg}(C)$ to be the minimum number of vertices needed to be added to $C$ in order to create a uniform central graph $G$ with $\la\ZZ(G)\ra=C$.   To match notation  we let $A_{ucg}(C,-)=A_{ucg}(C)$. Gu's main theorem is the following.

\begin{thm}[Gu]  If $C$ is a connected graph, then
	\[A_{ucg}(C, - )=\begin{cases}
	2 & \text{if } C=\{v\}\\
	4  & \text{if } C=K_n, n\geq 2\\
	6 & \text{otherwise} \\
	\end{cases}\]	
\end{thm}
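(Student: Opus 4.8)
The plan is to deduce Gu's theorem from the machinery already in place by recognizing that $A_{ucg}(C,-)$ is just a minimization of the full appendage count over all admissible peripheries. If $H$ is a UCG with $\la\ZZ(H)\ra=C$, then the vertices appended to $C$ are precisely $\II(H)\cup\CP(H)$, a disjoint union, so their number is $|\II(H)|+|V(P)|$ where $P=\la\CP(H)\ra$. Minimizing first over all $H$ realizing a fixed $P$, then over $P$, yields the reduction
\[
A_{ucg}(C,-)=\min_{P}\bigl(|V(P)|+A_{ucg}(C,P)\bigr),
\]
the minimum ranging over graphs $P$ with $A_{ucg}(C,P)<\infty$. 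By corollary \ref{r>1} any such $P$ has $r(P)>1$, so $|V(P)|\geq 2$.

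With this identity, I would handle each case by pairing a lower bound from the covering results with one explicit optimal periphery. The universal candidate is the two-vertex graph $\overline{K_2}$ with no edges, for which $\text{diam}(\overline{K_2})=r(\overline{K_2})=\infty$ and the cover $\{\{a\},\{b\}\}$ visibly satisfies conditions A and B (each singleton has a vertex at distance $\infty\geq 3$ from it), so $\text{cov}_A(\overline{K_2})=2$. For $C=\{v\}$, theorem \ref{prop:Appendv} gives $A_{ucg}(\{v\},P)=0$ for every admissible $P$, so the reduction collapses to minimizing $|V(P)|\geq 2$, attained at $\overline{K_2}$; hence $A_{ucg}(\{v\},-)=2$. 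For $C=K_n$ with $n\geq 2$, proposition \ref{prop:Knbound} gives $A_{ucg}(K_n,P)\geq\text{cov}_A(P)\geq 2$, so the total is at least $4$; taking $P=\overline{K_2}$, the branch $\text{diam}(P)\geq 4,\ r(P)\geq 3$ of theorem \ref{thm:KnAppend} gives $A_{ucg}(K_n,\overline{K_2})=2$ and total $4$, so $A_{ucg}(K_n,-)=4$. For the remaining connected non-complete $C$, which satisfy $\text{diam}(C)>1$, proposition \ref{prop:Cbound} gives $A_{ucg}(C,P)\geq 2\,\text{cov}_A(P)\geq 4$, so the total is at least $6$; taking $P=\overline{K_2}$, the branch $\text{diam}(P)=\infty$ of theorem \ref{thm:CAppend} gives $A_{ucg}(C,\overline{K_2})=4$ and total $6$, so $A_{ucg}(C,-)=6$.

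The steps requiring the most care are the reduction identity and the correct reading of the disconnected periphery $\overline{K_2}$. I must verify that counting \emph{all} appended vertices (not only the intermediate ones tracked by $A_{ucg}(C,P)$) indeed decomposes as $|V(P)|+A_{ucg}(C,P)$, and that the convention $\text{diam}=r=\infty$ places $\overline{K_2}$ in exactly the branches of theorems \ref{thm:KnAppend} and \ref{thm:CAppend} that carry the smallest appendage numbers. I should also confirm the base constructions underlying those theorems remain valid for disconnected $P$; the only property invoked is that each peripheral vertex has a non-neighbor, which holds since $r(\overline{K_2})>1$. Once these bookkeeping points are settled, the matching lower bounds from propositions \ref{prop:Knbound} and \ref{prop:Cbound} (together with $|V(P)|\geq 2$) and the upper bounds witnessed by $\overline{K_2}$ close all three cases simultaneously.
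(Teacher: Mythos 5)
Your proposal is correct and takes essentially the same route as the paper's own proof: the identical reduction $A_{ucg}(C,-)=\min_P\bigl(|V(P)|+A_{ucg}(C,P)\bigr)$, the identical lower bounds from $\text{cov}_A(P)\geq 2$ combined with propositions \ref{prop:Knbound} and \ref{prop:Cbound}, and the identical optimal periphery (the paper's $P^2$ is your $\overline{K_2}$). The only cosmetic difference is that the paper, instead of citing the summary theorems \ref{thm:KnAppend} and \ref{thm:CAppend}, verifies conditions A, B, A$'$, B$'$ directly for the singleton cover of $P^2$ and invokes the explicit constructions $\GG(K_n,P^2,\overline{P^2},1)-\{x_{0,1}\}$ and $\GG(C,P^2,\overline{P^2},2)-\{x_{0,1},x_{0,2}\}$ (via theorem \ref{thm:Knrange} and proposition \ref{prop:A'B'UCG}), which cleanly settles the bookkeeping point you flag at the end about disconnected $P$, since it bypasses intermediate results such as proposition \ref{d>4r>3} whose proof picks vertices at distance exactly $4$.
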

We use our results to give an alternative proof of Gu's result which is also true without the condition that $C$ is connected.

\begin{proof}
	Observe that $A_{ucg}(C, - )=\text{min}\{A_{ucg}(C, P)+|V(P)|\}$ where the minimum is taken over all graphs $P$.   By proposition \ref{r>1} $r(P)>1$ and so we may assume $|V(P)|\geq 2$. Since $\text{cov}_A(P)\geq 2$ for any $P$, by proposition \ref{prop:Knbound}  $A_{ucg}(K_n, P)\geq 2$ for $n\geq 2$, and by proposition \ref{prop:Cbound}  $A_{ucg}(C, P)\geq 4$ for any non-complete graph $C$.    Hence $A_{ucg}(K_n, -)\geq 4$ and $A_{ucg}(C, -)\geq 6$.
	
	Let  $P^2$ be a graph with two isolated vertices $\{u, v\}$  and  $\overline{P^2}=\{\{u\}, \{v\}\}$ a covering. Note $\overline{P^2}$ satisfies conditions A, B, A$'$ and B$'$.  Let \[G_1=\GG(K_n, P^2, \overline{P^2}, 1)-\{x_{0,1}\}\] \noindent and
	\[G_2=\GG(K_n, P^2, \overline{P_2}, 2)-\{x_{0,1}, x_{0,2}\}.\] \noindent The graph $G_1$ is the UCG in the first half of the proof of theorem \ref{thm:Knrange} and $G_2$ is a UCG by proposition  \ref{prop:A'B'UCG}.  Furthermore, $|\II(G_1)|=2$ and $|\II(G_1)|=4$, so $A_{ucg}(K_n, -)=4$ and  $A_{ucg}(C, -)=6$.

	Finally, $A_{ucg}(\{v\}, -)=2$ by theorem \ref{prop:Appendv}.
\end{proof}
	We can also consider $A_{ucg}(-, P)$,  the minimum number of vertices needed to be added to $P$ in order to construct a uniform central graph $G$ with $\la\CP(G)\ra=P$.  From propositions \ref{r>1} and \ref{prop:Appendv}, it follows that $A_{ucg}(-, P)=\infty$ if $r(P)=1$, and $A_{ucg}(-, P)=1$ otherwise.

\end{document}